 \newtheorem{thm}{Theorem}[section]
 \newtheorem{lem}[thm]{Lemma}
 \theoremstyle{definition}
 \theoremstyle{remark}
 \numberwithin{equation}{section}
\newtheorem{theorem}{\bf Theorem}
\newtheorem{case}{\bf Case}
\newtheorem{lemma}[theorem]{\bf Lemma}
\newtheorem{proposition}[theorem]{\bf Proposition}
\newtheorem{remark}[theorem]{\bf Remark}
\newtheorem{step}{\bf Step}
\newcommand{\sign}{{\rm sgn}}
\newcommand{\dis}{\displaystyle}
\newcommand{\N}{\mathbb N}
\newcommand{\Z}{\mathbb Z}
\newcommand{\R}{\mathbb R}
\newcommand{\intox}{\int_{\mathbb{T}^d}}
\numberwithin{equation}{section}
\numberwithin{theorem}{section}
\numberwithin{figure}{section}
\begin{document}

%
%
%
%
%
%
%
%
%

\title[Wellposedness and Convergence of solutions]
 {Wellposedness and convergence of solutions to a class of forced non-diffusive equations with applications}

\author{Susan Friedlander}

\address{Department of Mathematics\\
University of Southern California}

\email{susanfri@usc.edu}

\author{Anthony Suen}

\address{Department of Mathematics and Information Technology\\
The Education University of Hong Kong}

\email{acksuen@eduhk.hk}

\date{}

\keywords{active scalar equations, vanishing viscosity limit, Gevrey-class solutions}

\subjclass{76D03, 35Q35, 76W05}

\begin{abstract}
This paper considers a family of non-diffusive active scalar equations where a viscosity type
parameter enters the equations via the constitutive law that relates the drift velocity with the
scalar field. The resulting operator is smooth when the viscosity is present but singular
when the viscosity is zero. We obtain Gevrey-class local well-posedness results and convergence
of solutions as the viscosity vanishes. We apply our results to two examples that are derived from
physical systems: firstly a model for magnetostrophic turbulence in the Earth's fluid core and
secondly flow in a porous media with an ``effective viscosity''.
\end{abstract}

\maketitle
\section{Introduction}\label{introduction}

Active scalar equations arise in many areas of fluid dynamics, with the most classical being the two dimensional Euler equation for an incompressible, inviscid flow in vorticity form.  Another much studied active scalar equation is the surface quasi-geostrophic equation (SQG) introduced by Constantin, Majda and Tabak \cite{CMT94} as a two dimensional analogue of the three dimensional Euler equation (c.f \cite{CW09}, \cite{CV10}, \cite{FFWV12}, \cite{KNV07}, \cite{R95}). The physics of an active scalar equation is encoded in the constitutive law that relates the transport velocity vector $u$ with the scalar field $\theta$. This law produces a differential operator that when applied to the scalar field determines the velocity.  The singular or smoothing properties of the operator are closely connected with the mathematical properties of the active scalar equation. In this present paper we study the following class of non-diffusive active scalar equations in $\mathbb{T}^d\times(0,\infty)=[0,2\pi]^d\times(0,\infty)$ with $d\ge2$:
\begin{align}
\label{abstract active scalar eqn introduction} \left\{ \begin{array}{l}
\partial_t\theta^{\nu}+u^\nu\cdot\nabla\theta^{\nu}=S, \\
u_j^{\nu}=\partial_{x_i} T_{ij}^{\nu}[\theta^{\nu}],\theta^{\nu}(x,0)=\theta_0(x)
\end{array}\right.
\end{align}
where $\nu\ge0$. Here $\theta_0$ is the initial condition and $S=S(x)$ is a given smooth function that represents the forcing of the system.

Our motivation for addressing such a class of active scalar equations comes from two rather different physical systems that under particular parameter regimes give rise to systems of the form \eqref{abstract active scalar eqn introduction}. The first example comes from MHD and a model proposed by Moffatt and Loper \cite{ML94}, Moffatt \cite{M78} for magenetostrophic turbulence in the Earth's fluid core. Under the postulates in \cite{ML94}, the governing equation reduces to a 3 dimensional active scalar equation for a temperature field $\theta$
\begin{align}\label{active scalar equation general MG}
\partial_t \theta + u \cdot \nabla \theta = \kappa \Delta \theta + S
\end{align}
where the constitutive law is obtained from the linear system
\begin{align}
e_3 \times u &= - \nabla P + e_2 \cdot \nabla b + \theta e_3 + \nu \Delta u,\label{MG equation linear system 1}\\
0 &= e_2 \cdot \nabla u + \Delta b, \label{MG equation linear system 2}\\
\nabla \cdot u &= 0, \nabla \cdot b = 0. \label{MG equation linear system 3}
\end{align}
This system encodes the vestiges of the physics in the problem, namely the Coriolis force, the Lorentz force and gravity. Vector manipulations of \eqref{MG equation linear system 1}-\eqref{MG equation linear system 3} give the expression
\begin{align}\label{Vector manipulations}
\{[\nu \Delta^2 - (e_2 \cdot \nabla)^2]^2 + (e_3 \cdot \nabla)^2 \Delta \} u &= - [\nu \Delta^2 - (e_2 \cdot \nabla)^2] \nabla \times (e_3 \times \nabla \theta)\notag\\
&\qquad + (e_3 \cdot \nabla)\Delta (e_3 \times \nabla \theta).
\end{align}
Here $(e_1, e_2, e_3)$ denote Cartesian unit vectors. The explicit expression for the components of the Fourier multiplier symbol $\widehat{M}^\nu$ as functions of the Fourier variable $k = (k_1, k_2, k_3) \in \mathbb{Z}^3$ are obtained from the constitutive law \eqref{Vector manipulations} to give
\begin{align}
\widehat M^{\nu}_1(k)&=[k_2k_3|k|^2-k_1k_3(k_2^2+\nu|k|^4)]D(k)^{-1},\label{MG Fourier symbol_1}\\
\widehat M^{\nu}_2(k)&=[-k_1k_3|k|^2-k_2k_3(k_2^2+\nu|k|^4)]D(k)^{-1},\label{MG Fourier symbol_2}\\
\widehat M^{\nu}_3(k)&=[(k_1^2+k_2^2)(k_2^2+\nu|k|^4)]D(k)^{-1},\label{MG Fourier symbol_3}
\end{align}
where
\begin{align}
D(k)=|k|^2k_3^2+(k_2^2+\nu|k|^4)^2.\label{MG Fourier symbol_4}
\end{align}
The nonlinear equation \eqref{active scalar equation general MG} with $u$ related to $\theta$ via \eqref{Vector manipulations} is called the magnetogeostrophic (MG) equation and its mathematical properties have been studied in a series of papers including \cite{FRV12}, \cite{FRV14}, \cite{FS15}, \cite{FS18}, \cite{FV11a}, \cite{FV11b}.
In the magnetostrophic turbulence model the parameters $\nu$, the nondimensional viscosity, and $\kappa$, the nondimensional thermal diffusivity, are extremely small. The behavior of the MG equation is dramatically different when the parameters $\nu$ and $\kappa$ are present (i.e. positive) or absent (i.e. zero). The limit as either or both parameters vanish in highly singular. Since both parameters multiply a Laplacian term, their presence is smoothing. However $\kappa$ enters \eqref{active scalar equation general MG} in a parabolic heat equation role whereas $\nu$ enters via the constitutive law \eqref{Vector manipulations}.  The mathematical properties of the MG equation have been determined in various settings of the parameters via an analysis of the Fourier multiplier symbol $\widehat{M}^\nu$ given by \eqref{MG Fourier symbol_1}-\eqref{MG Fourier symbol_4}.

We note that the Fourier multiplier symbols $\widehat{M}^0$ given by \eqref{MG Fourier symbol_1}-\eqref{MG Fourier symbol_4} with $\nu = 0$ are {\it not} bounded in all regions of Fourier space \cite{FV11a}. More specifically in ``curved" regions where $k_3 = \mathcal{O}(1), \; k_2 = \mathcal{O}(|k_1|^{1/2})$ the symbols are unbounded as $|k_1| \to \infty$ with $|\widehat{M}^0(k)| \le C|k|$ for some positive constant $C$.  Thus when $\nu = 0$ the relation between $u$ and $\theta$ is given by a {\it singular} operator of order 1.  The implications of this fact for the inviscid MG equation are summarized in the survey article by Friedlander, Rusin and Vicol \cite{FRV12}.  In particular, when $\kappa > 0$ the inviscid but thermally dissipative MG equation is globally well-possed. In contrast when $\nu = 0$ {\it and} $\kappa = 0$, the singular inviscid MG$^0$ equation is {\it ill-possed} in the sense of Hadamard in any Sobolev space. In a recent paper \cite{FS18} Friedlander and Suen examine the limit of vanishing viscosity in the case when $\kappa > 0$. They prove global existence of classical solutions to the forced MG$^\nu$ equations and obtain strong convergence of solutions as the viscosity vanishes.  In this present paper we turn to the case where $\kappa = 0$ and examine the MG$^\nu$ system, without the benefit of thermal diffusion, both when $\nu > 0$ and in the case $\nu=0$ where the operator MG$^0$ is singular of order 1.  We obtain Gevrey-class local well-posedness and convergence of solutions as $\nu \to 0$.  The precise statements of the theorems are given in Section~\ref{main results} in the context of a general class of non-diffusive active scalar equations that includes the MG equations with $\kappa=0$.

The second example of a physical system which can be modeled by an active scalar equation where a small smoothing parameter enters into the constitutive law comes from flow in a porous medium.  The incompressible porous media Brinkman equation with an ``effective viscosity" $\nu$ is derived via a modified Darcy's Law as suggested by Brinkman \cite{B49}.  The 2D equation relating the velocity $u$, the density $\theta$ and the pressure $P$ is given in non-dimensional form by
\begin{align}
u &= - \nabla P - e_2 \theta + \nu \Delta u \label{IPMB equation linear system 1}\\
\nabla \cdot u &= 0 \label{IPMB equation linear system 2}
\end{align}
which produces the constitutive law
\begin{align}\label{IPMB constitutive law}
u &= (1 - \nu \Delta)^{-1} [-\nabla \cdot (-\Delta)^{-1} e_2 \cdot \nabla \theta - e_2\theta]\notag\\
&= (1-\nu \Delta)^{-1} R^{\bot} R_1 \theta
\end{align}
where $R = (R_1, R_2)$ is the vector of Riesz transforms. The 2D components of the Fourier multiplier symbol corresponding to \eqref{IPMB constitutive law} are
\begin{equation}\label{Fourier multiplier symbol IPMB}
\frac{1}{1 + \nu (k^2_1 + k^2_2)} \left(\frac{k_1k_2}{k^2_1 + k^2_2} \quad , \quad \frac{-k^2_1}{(k^2_1 + k^2_2)}\right)
\end{equation}
Again there is a dramatic difference in the operator between the two cases $\nu > 0$ and $\nu = 0$. In the first case the operator is smoothing of order 2 and in the second case the operator is singular of order zero. The IPMB active scalar example is a 2 dimensional nonlinear equation for $\theta$ given by
\begin{equation}\label{active scalar equation general IPMB}
\partial_t \theta + (u \cdot \nabla) \theta = 0
\end{equation}
coupled with the constitutive law \eqref{IPMB constitutive law}.

The well known IPM equations, i.e. \eqref{IPMB constitutive law}-\eqref{active scalar equation general IPMB} without the effective viscosity $\nu$, have been studied in a number of papers, c.f \cite{CFG11}, \cite{CGO07}. As we observed when $\nu = 0$ the operator in \eqref{IPMB constitutive law} is a singular integral operator of order zero. This is also the case for the SQG equations. However the SQG operator is odd where as the IPM operator is even, a property that it shares with the MG operator.  Implications for well/ill posedness due to the odd/even structure of the operator in an active scalar equation are explored in \cite{FFWV12}, \cite{FRV14}, \cite{KVW16}. In this present paper we study the system \eqref{IPMB constitutive law}-\eqref{active scalar equation general IPMB} in the limit of vanishing viscosity.  We obtain results analogous to those for the MG$^\nu$ system in the limit of vanishing viscosity.  The principal difference is that the MG$^0$ operator is singular of order 1 where as the IPM operator is singular of order zero.  In the ``smoother" IPM case our convergence results are valid in Sobolev spaces rather than the Gevrey-class convergence results for the MG equation.
\section{Main Results for a General Class of Active Scalar Equations}\label{main results}
We now return to the abstract formulation of our problem in the setting of the following system of active scalar equations parameterized by a ``viscosity" parameter $\nu$.
\begin{align}
\label{abstract active scalar eqn} \left\{ \begin{array}{l}
\partial_t\theta^{\nu}+u^\nu\cdot\nabla\theta^{\nu}=S, \\
u_j^{\nu}=\partial_{x_i} T_{ij}^{\nu}[\theta^{\nu}],\theta^{\nu}(x,0)=\theta_0(x)
\end{array}\right.
\end{align}
where $\mathbb{T}^d\times(0,\infty)=[0,2\pi]^d\times(0,\infty)$ with $d\ge2$. We assume that $\int_{\mathbb{T}^d}\theta^\nu(x,t)dx=\int_{\mathbb{T}^d}S(x)=0$ for all $t\ge0$. $\{T_{ij}^{\nu}\}_{\nu\ge0}$ is a sequence of operators\footnote{For simplicity, we sometime write $T^0_{ij}=T_{ij}^{\nu}\Big|_{\nu=0}$ and $u^\nu=\partial_x T^\nu[\theta^\nu]$.} which satisfy:
\begin{enumerate}
\item[A1]  $\partial_i\partial_j T^{\nu}_{ij}f=0$ for any smooth functions $f$ for all $\nu\ge0$.
\item[A2] $T_{ij}^\nu:L^\infty(\mathbb{T}^d)\rightarrow BMO(\mathbb{T}^d)$ are bounded for all $\nu\ge0$.
\item[A3] For each $\nu>0$, there exists a constant $C_{\nu}>0$ such that for all $1\le i,j\le d$, $$|\widehat T^{\nu}_{ij}(k)|\le C_{\nu}|k|^{-3}, \forall k\in\mathbb{Z}^d.$$
\item[A4] For each $1\le i,j\le d$, $$\lim_{\nu\rightarrow0}\sum_{k\in\mathbb{Z}^d}|\widehat{T^{\nu}_{ij}}(k)-\widehat{T^0_{ij}}(k)|^2|\widehat{g}(k)|^2=0$$
for all $g\in L^2$.
\end{enumerate}
Moreover, we further assume that $\{T_{ij}^{\nu}\}_{\nu\ge0}$ satisfy either one of the following assumption:
\begin{enumerate}
\item[A5$_{1}$] There exists a constant $C_{0}>0$ independent of $\nu$, such that for all $1\le i,j\le d$, $$\sup_{\nu\in(0,1]}\sup_{\{k\in\mathbb{Z}^d\}}|\widehat T^{\nu}_{ij}(k)|\le C_{0};$$
$$\sup_{\{k\in\mathbb{Z}^d\}}|\widehat T^0_{ij}(k)|\le C_{0}.$$
\item[A5$_2$] There exists a constant $C_{0}>0$ independent of $\nu$, such that for all $1\le i,j\le d$, $$\sup_{\nu\in(0,1]}\sup_{\{k\in\mathbb{Z}^d\}}|k_i\widehat T^{\nu}_{ij}(k)|\le C_{0};$$
$$\sup_{\{k\in\mathbb{Z}^d\}}|k_i\widehat T^0_{ij}(k)|\le C_{0}.$$
\end{enumerate}
\noindent Here are some remarks regarding the assumptions on $T^{\nu}_{ij}$:
\begin{itemize}
\item The assumption A1 implies that $u^\nu$ is divergence-free for all $\nu\ge0$.
\item The assumption A3 is needed for obtaining global-in-time wellposedness for \eqref{abstract active scalar eqn} for the case $\nu>0$, which implies that $\partial_x T^\nu$ are operators of smoothing order 2 for $\nu>0$.
\item The assumption A5$_2$ is stronger than assumption A5$_1$. In particular, assumption A5$_2$ implies that $\partial_x T^\nu$ are operators of zero order.
\item The assumption A4 is needed for obtaining convergence of solutions as $\nu\rightarrow0$, and is consistent with the one given in \cite{FS18}.
\item All the assumptions A1--A4 and A5$_1$ are consistent with the case for the magnetogeostrophic (MG) equations, while assumptions A1--A4 and A5$_2$ are consistent with the case for incompressible porous media Brinkmann (IPMB) equations.
\end{itemize}

The main results that we prove in this present work are stated in the following theorems:

\begin{theorem}[Wellposedness in Sobolev space in the case $\nu>0$]\label{Wellposedness in Sobolev space}
Let $\theta_0\in W^{s,d}$ for $s\ge0$ and $S$ be a $C^\infty$-smooth source term. Then for each $\nu>0$, under the assumptions A1--A3 and A5$_i$ for $i=1$ or 2, we have:
\begin{itemize}
\item if $s=0$, there exists unique global weak solution to \eqref{abstract active scalar eqn} such that
\begin{align}
\theta^\nu&\in BC((0,\infty);L^d),\label{BC solution theta}\\
u^\nu&\in C((0,\infty);W^{2,d}).\label{Continuous u}
\end{align}
In particular, $\theta^\nu(\cdot,t)\rightarrow\theta_0$ weakly in $L^d$ as $t\rightarrow0^+$. Here $BC$ stands for {\it bounded continuous functions}.
\item if $s>0$, there exists a unique global-in-time solution $\theta^\nu$ to \eqref{abstract active scalar eqn} such that $\theta^\nu(\cdot,t)\in W^{s,d}$ for all $t\ge0$. Furthermore, for $s=1$, we have the following single exponential growth in time on $\|\nabla\theta^\nu(\cdot,t)\|_{L^d}$:
\begin{align}\label{exp growth}
\|\nabla\theta^\nu(\cdot,t)\|_{L^d}\le C \|\nabla\theta_0\|_{L^d}\exp\left(C\left(t\|\theta_0\|_{W^{1,d}}+t^2\|S\|_{L^\infty}+t\|S\|_{W^{1,d}}\right)\right),
\end{align}
where $C>0$ is a constant which depend only on $\nu$ and the spatial dimension $d$.
\end{itemize}
\end{theorem}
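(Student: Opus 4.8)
The plan is to exploit the fact that, for each fixed $\nu>0$, assumption A3 makes $\theta\mapsto u^\nu=\partial_x T^\nu[\theta]$ a smoothing operator of order two (since then $|k_i\widehat T^{\nu}_{ij}(k)|\le C_\nu|k|^{-2}$), while A1 makes $u^\nu$ divergence free; thus \eqref{abstract active scalar eqn} is a transport equation coupled to a very regularizing constitutive law. I would first construct solutions by a vanishing-mollification (or Galerkin) scheme: replace $u^\nu$ by $\partial_x T^\nu[J_\varepsilon\theta^\nu_\varepsilon]$ with a Friedrichs mollifier $J_\varepsilon$, solve the resulting linear transport problem, obtain estimates uniform in $\varepsilon$, and pass to the limit by Aubin--Lions compactness. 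Uniqueness follows from the DiPerna--Lions theory: any solution with $\theta^\nu\in L^\infty_tL^d_x$ has $u^\nu\in L^\infty_tW^{2,d}_x\subset L^\infty_tW^{1,p}_x$ for all $p<\infty$ with $\nabla\cdot u^\nu=0$, so the associated linear transport equation is well posed and renormalized; a Gr\"onwall estimate on the $L^2$-norm of the difference of two solutions (writing $u_1^\nu-u_2^\nu=\partial_xT^\nu[\theta_1^\nu-\theta_2^\nu]$ and using the order-two smoothing to put the velocity difference in $L^\infty$) then upgrades this to uniqueness for the nonlinear equation.

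The engine behind every a priori bound is the cancellation of the advection term. Pairing \eqref{abstract active scalar eqn} with $|\theta^\nu|^{p-2}\theta^\nu$ and integrating by parts, the term $\int |\theta^\nu|^{p-2}\theta^\nu\, u^\nu\cdot\nabla\theta^\nu=\tfrac1p\int u^\nu\cdot\nabla|\theta^\nu|^p=-\tfrac1p\int(\nabla\cdot u^\nu)|\theta^\nu|^p$ vanishes by A1, leaving $\tfrac{d}{dt}\|\theta^\nu\|_{L^p}\le\|S\|_{L^p}$, hence
\begin{equation*}
\|\theta^\nu(\cdot,t)\|_{L^p}\le\|\theta_0\|_{L^p}+t\,\|S\|_{L^p},\qquad p\in[d,\infty],
\end{equation*}
the case $p=\infty$ either by sending $p\to\infty$ or directly from $\tfrac{D}{Dt}\theta^\nu=S$ along characteristics. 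For $s=0$ this already gives \eqref{BC solution theta}; feeding $\theta^\nu\in L^\infty_tL^d_x$ into A3 yields $u^\nu$ bounded in $W^{2,d}$, which is \eqref{Continuous u}. Time-continuity of $\theta^\nu$ and $u^\nu$ and the weak convergence $\theta^\nu(\cdot,t)\rightharpoonup\theta_0$ in $L^d$ as $t\to0^+$ then come from the equation itself: since $\partial_t\theta^\nu=S-\nabla\cdot(u^\nu\theta^\nu)$ is bounded in $W^{-1,d}$, one has weak equicontinuity in time, which combined with the $L^d$ bound and density of smooth functions yields the stated continuity and the initial-trace property.

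For $s>0$ one differentiates the equation (or applies $\Lambda^s$) and commutes past the transport term; the commutator $[\,\partial^\alpha,u^\nu\cdot\nabla\,]\theta^\nu$ involves $\partial u^\nu$, and the crucial structural point is that $\partial_k u^\nu_l=\partial_iT^\nu_{il}[\partial_k\theta^\nu]$ is \emph{two} orders smoother than $\partial_k\theta^\nu$. Propagating $W^{s,d}$ regularity then reduces to a differential inequality whose top-order coefficient is essentially $\|\nabla u^\nu\|_{L^\infty}$ (plus, for $s>1$, lower-order terms that are absorbed into it). For $s=1$ the key estimate is that $\|\nabla u^\nu(\cdot,t)\|_{L^\infty}$ is controlled---via the order-two smoothing of A3, the boundedness A2, and the Riesz-transform structure hidden in $\partial_x$---by the \emph{lower} norms $\|\theta^\nu(\cdot,t)\|_{L^\infty}$ and $\|\theta^\nu(\cdot,t)\|_{L^d}$, both of which the transport identity above has already bounded linearly in $t$ by the data. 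This turns the estimate into a genuine Gr\"onwall inequality,
\begin{equation*}
\frac{d}{dt}\|\nabla\theta^\nu(\cdot,t)\|_{L^d}\le C\big(\|\theta_0\|_{W^{1,d}}+t\,\|S\|_{L^\infty}+\|S\|_{W^{1,d}}\big)\,\|\nabla\theta^\nu(\cdot,t)\|_{L^d}+C\,\|S\|_{W^{1,d}},
\end{equation*}
and integrating gives exactly the single-exponential bound \eqref{exp growth}; the same scheme, iterated, handles general $s>0$.

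The step I expect to be the main obstacle is precisely the last one: making the $s=1$ inequality \emph{linear} rather than Riccati-type in $\|\nabla\theta^\nu\|_{L^d}$. The naive bound $\|\nabla u^\nu\|_{L^\infty}\le C\|u^\nu\|_{W^{3,d}}\le C_\nu\|\theta^\nu\|_{W^{1,d}}$ reinserts $\|\nabla\theta^\nu\|_{L^d}$ into the coefficient and yields only local-in-time existence. Avoiding this is delicate because in dimension $d$ the relevant Sobolev embeddings are borderline ($W^{1,d}\not\hookrightarrow L^\infty$, $W^{2,d}\not\hookrightarrow C^1$) and because A3 controls only the \emph{size} of the symbol $\widehat T^\nu_{ij}$, not its regularity, so Calder\'on--Zygmund/Mikhlin theory is not directly available; this is exactly where A2 (and, in the concrete MG and IPMB cases, the explicit smoothness of the multiplier) must be used, in order to place $\nabla u^\nu$ in $L^\infty$ with a norm governed by $\|\theta^\nu\|_{L^\infty}+\|\theta^\nu\|_{L^d}$ rather than by a norm of $\nabla\theta^\nu$.
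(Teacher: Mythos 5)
Your overall strategy (transport $L^p$ estimates from A1, order-two smoothing from A3, reduction of the $W^{s,d}$ propagation to a bound on $\|\nabla u^\nu\|_{L^\infty}$ by underived norms of $\theta^\nu$) is the same as the paper's, and you correctly locate the crux in the borderline embeddings at $p=d$. However, the two places where you stay vague are exactly where your plan, as written, does not close. First, for $s=0$ uniqueness: a Gr\"onwall estimate on $\|\theta^{\nu,1}-\theta^{\nu,2}\|_{L^2}$ cannot be closed at $L^d$ regularity, because the difference term $\int\phi\,(u^{\nu,1}-u^{\nu,2})\cdot\nabla\theta^{\nu,2}$ forces a derivative onto one of the merely-$L^d$ factors no matter how you integrate by parts, and the velocity is only log-Lipschitz ($u^\nu\in W^{2,d}$ gives $\nabla u^\nu\in W^{1,d}\subset BMO$ but not $L^\infty$), so there is no Lipschitz flow to compare against either. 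The paper replaces your Gr\"onwall step by an Osgood/Yudovich-type argument: it builds the flow map from the $\overline{L.L.}$ bound on $u^\nu$ and proves uniqueness via the losing Besov-space estimate of Bahouri--Chemin--Danchin (their Theorem~3.28), iterated over short time intervals. Some such logarithmic-loss device is unavoidable at this regularity; DiPerna--Lions gives uniqueness for the \emph{linear} problem with frozen velocity but does not by itself provide the nonlinear stability estimate.

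Second, for $s>0$ (in particular $s=1$): you propose to control $\|\nabla u^\nu\|_{L^\infty}$ through $\|\theta^\nu\|_{L^\infty}$, and your transport bound $\|\theta^\nu(t)\|_{L^\infty}\le\|\theta_0\|_{L^\infty}+t\|S\|_{L^\infty}$ requires $\theta_0\in L^\infty$ --- which is \emph{not} implied by $\theta_0\in W^{s,d}$ for $s\le1$ (borderline Sobolev), and indeed $\|\theta_0\|_{L^\infty}$ does not appear in \eqref{exp growth}. The fix, which is what the paper does, is to stay at finite integrability: choose $q$ with $d<q<\infty$ and $W^{s,d}\hookrightarrow L^q$ (namely $q=d/(1-s)$ for $0<s<1$ and $q=2d$ for $s\ge1$), so that the transport estimate gives $\|\theta^\nu(t)\|_{L^q}\le\|\theta_0\|_{L^q}+t\|S\|_{L^\infty}\le C(\|\theta_0\|_{W^{s,d}}+t\|S\|_{L^\infty})$, and then $\|\nabla u^\nu\|_{L^\infty}\le C\|u^\nu\|_{W^{2,q}}\le C\|\theta^\nu\|_{L^q}$ by the smoothing of A3 together with $W^{1,q}\hookrightarrow L^\infty$ for $q>d$. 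With that substitution your differential inequality for $\|\nabla\theta^\nu\|_{L^d}$ is genuinely linear and integrates to \eqref{exp growth}. (Your observation that A3, being only a size bound on the symbol, does not by itself justify $L^p\to W^{2,p}$ boundedness for $p\ne2$ is fair; the paper invokes the Fourier multiplier theorem at this step and implicitly relies on the smoothness of the concrete symbols in the applications.)
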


\begin{theorem}[Gevrey-class global wellposedness in the case $\nu>0$]\label{Gevrey-class global wellposedness}
Fix $s\ge1$. Let $\theta_0$ and $S$ be of Gevrey-class $s$ with radius of convergence $\tau_0>0$. Then for each $\nu>0$, under the assumptions A1--A3 and A5$_i$ for $i=1$ or 2, there exists a unique Gevrey-class $s$ solution $\theta^\nu$ to \eqref{abstract active scalar eqn} on $\mathbb{T}^d\times[0,\infty)$ with radius of convergence at least $\tau=\tau(t)$ for all $t\in[0,\infty)$, where $\tau$ is a decreasing function satisfying
\begin{align}\label{lower bound on tau}
\tau(t)\ge\tau_0e^{-C\left(\|e^{\tau_0\Lambda^\frac{1}{s}}\theta_0\|_{L^2}+2\|e^{\tau_0\Lambda^\frac{1}{s}}S\|_{L^2}\right)t}.
\end{align}
Here $C>0$ is a constant which depends on $\nu$ but independent of $t$.
\end{theorem}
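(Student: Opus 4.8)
The plan is to deduce Theorem~\ref{Gevrey-class global wellposedness} from an \emph{a priori} estimate on the Gevrey norm of the solution already provided by Theorem~\ref{Wellposedness in Sobolev space}. Since a function of Gevrey-class $s$ with positive radius of convergence lies in $\bigcap_{k\ge 0}W^{k,d}$, for each fixed $\nu>0$ Theorem~\ref{Wellposedness in Sobolev space} supplies a unique global-in-time solution $\theta^\nu$ that is $C^\infty$ in $x$ for every $t\ge0$, and uniqueness within the Gevrey class is inherited from uniqueness in $W^{s,d}$. Thus it suffices, writing $\Lambda=(-\Delta)^{1/2}$, to propagate finiteness of
$$\Phi(t):=\big\|e^{\tau(t)\Lambda^{1/s}}\theta^\nu(\cdot,t)\big\|_{L^2}$$
(or of the analogous norm measured in a slightly higher Sobolev index, if that is needed to close the nonlinear term) for a suitably chosen decreasing radius function $\tau$ with $\tau(0)\le\tau_0$. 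Applying $e^{\tau(t)\Lambda^{1/s}}$ to \eqref{abstract active scalar eqn}, pairing in $L^2$ with $e^{\tau(t)\Lambda^{1/s}}\theta^\nu$, and noting that $\partial_t$ also acts on the exponent gives the identity
$$\frac{1}{2}\frac{d}{dt}\Phi^2=\dot\tau(t)\,\big\|\Lambda^{\frac{1}{2s}}e^{\tau\Lambda^{1/s}}\theta^\nu\big\|_{L^2}^2-\big\langle e^{\tau\Lambda^{1/s}}(u^\nu\!\cdot\!\nabla\theta^\nu),e^{\tau\Lambda^{1/s}}\theta^\nu\big\rangle+\big\langle e^{\tau\Lambda^{1/s}}S,e^{\tau\Lambda^{1/s}}\theta^\nu\big\rangle .$$
The first term is a favourable damping term ($\dot\tau<0$), and the forcing term is harmless since $S$ is time-independent and of Gevrey-class $s$ with radius $\tau_0$, so it is bounded by $\|e^{\tau_0\Lambda^{1/s}}S\|_{L^2}\,\Phi$. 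These computations should be performed on Galerkin truncations, where $e^{\tau\Lambda^{1/s}}$ is bounded, and then passed to the limit; alternatively they can be justified directly from the smoothness of $\theta^\nu$.

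The core of the proof is the nonlinear term, which must be controlled \emph{without} the benefit of thermal diffusion. Here I would combine two structural facts. By A1, $u^\nu$ is divergence-free, so $\langle u^\nu\!\cdot\!\nabla v,v\rangle=0$ and the nonlinear term equals the commutator $-\big\langle[e^{\tau\Lambda^{1/s}},u^\nu\!\cdot\!\nabla]\theta^\nu,e^{\tau\Lambda^{1/s}}\theta^\nu\big\rangle$. By A3, $u^\nu=\partial_x T^\nu[\theta^\nu]$ loses one derivative while $T^\nu$ gains three, so $|\widehat{u^\nu}(k)|\le C_\nu|k|^{-2}|\widehat{\theta^\nu}(k)|$, i.e.\ $u^\nu$ is two derivatives smoother than $\theta^\nu$. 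In Fourier variables the commutator is estimated using the elementary inequality $(a+b)^{1/s}\le a^{1/s}+b^{1/s}$ valid for $a,b\ge0$ and $s\ge1$ (which treats the analytic case $s=1$ and the genuine Gevrey case $s>1$ uniformly), giving $e^{\tau|k|^{1/s}}\le e^{\tau|k-j|^{1/s}}e^{\tau|j|^{1/s}}$ and, via the mean value theorem together with $\big||k|^{1/s}-|j|^{1/s}\big|\le|k-j|^{1/s}$,
$$\big|e^{\tau|k|^{1/s}}-e^{\tau|j|^{1/s}}\big|\le C\,\tau\,|k-j|^{1/s}\,e^{\tau|k-j|^{1/s}}\,e^{\tau|j|^{1/s}} .$$
Inserting this and using A3 to trade the extra powers of $|k-j|$ landing on $u^\nu$ against its smoothing, one finds that the derivative on $\nabla\theta^\nu$ is distributed so that part of it is absorbed by the damping norm $\big\|\Lambda^{\frac{1}{2s}}e^{\tau\Lambda^{1/s}}\theta^\nu\big\|_{L^2}^2$ and the remainder is controlled by $\Phi$ and lower-order Gevrey norms; the smoothing order $2$ is precisely what makes this possible (and in higher dimensions it is convenient to carry along a small amount of Sobolev regularity). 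The outcome is a bound of the schematic form $C_\nu\,P(\Phi)\,\big\|\Lambda^{\frac{1}{2s}}e^{\tau\Lambda^{1/s}}\theta^\nu\big\|_{L^2}^2+C_\nu\,\Phi^2$ with $P$ a fixed polynomial (and possibly an additional $\tau$ prefactor in the first term).

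Finally I would fix $\tau$. Feeding the nonlinear bound and the forcing bound into the energy identity and requiring the coefficient of $\big\|\Lambda^{\frac{1}{2s}}e^{\tau\Lambda^{1/s}}\theta^\nu\big\|_{L^2}^2$ to be nonpositive forces a differential inequality for $\tau$ (of the form $\dot\tau\le-C_\nu\,P(\Phi)\,\tau$ after rewriting, so that $\tau(t)=\tau_0\exp\!\big(-C_\nu\int_0^t P(\Phi(r))\,dr\big)$ stays strictly positive for all $t$), while what remains of the energy identity is a closed inequality for $\Phi$ alone. Integrating the coupled system — using the resulting growth bound on $\Phi$ in terms of $\|e^{\tau_0\Lambda^{1/s}}\theta_0\|_{L^2}$ and $\|e^{\tau_0\Lambda^{1/s}}S\|_{L^2}$ — yields a strictly positive decreasing $\tau(t)$ satisfying the exponential lower bound \eqref{lower bound on tau}, with $C$ depending on $\nu$ and $d$ but not on $t$. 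A standard continuity/bootstrap argument then upgrades this to $\Phi(t)<\infty$ for all $t\ge0$, i.e.\ $\theta^\nu(\cdot,t)$ is of Gevrey-class $s$ with radius of convergence at least $\tau(t)$, which is the assertion of the theorem.

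I expect the main obstacle to be exactly the nonlinear estimate sketched in the second paragraph: with $\kappa=0$ there is no parabolic dissipation to absorb the advective derivative $\nabla\theta^\nu$, and the Gevrey weight only supplies the fractional gain $\Lambda^{1/(2s)}$ (with $1/(2s)\le1/2$), so closing the estimate depends on a delicate interplay between this fractional gain, the order-$2$ smoothing in A3, the commutator structure coming from incompressibility, and careful bookkeeping of the exponential weight through the product estimate — together with the choice of the precise functional setting and the tracking of constants needed to recover exactly the exponent in \eqref{lower bound on tau}.
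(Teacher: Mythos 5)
Your proposal follows essentially the same route as the paper: a Gevrey energy estimate for $\Phi(t)=\|e^{\tau(t)\Lambda^{1/s}}\theta^\nu\|_{L^2}$ with a decreasing radius $\tau$, reduction of the nonlinear term to a commutator via $\nabla\cdot u^\nu=0$, a Fourier-side estimate of that commutator using sub-additivity of $|k|^{1/s}$ together with the order-$2$ smoothing coming from A3, and finally the choice of $\tau$ so that the coefficient of $\|\Lambda^{1/(2s)}e^{\tau\Lambda^{1/s}}\theta^\nu\|_{L^2}^2$ is nonpositive; existence and uniqueness are indeed inherited from the Sobolev theory, exactly as in the paper.

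One caveat on the step you flag as the main obstacle: your schematic bound $C_\nu P(\Phi)\|\Lambda^{1/(2s)}e^{\tau\Lambda^{1/s}}\theta^\nu\|_{L^2}^2+C_\nu\Phi^2$ is weaker than what is actually needed. In the paper's Lemma~\ref{estimate on stretching term} the nonlinear term is bounded by $C\tau\,\Phi\,\|\Lambda^{1/(2s)}e^{\tau\Lambda^{1/s}}\theta^\nu\|_{L^2}^2$ \emph{alone} --- there is no residual $C_\nu\Phi^2$ term, and the prefactor is linear in $\Phi$ rather than a general polynomial. This sharper form is essential for the stated conclusion: choosing $\dot\tau+C\tau\Phi=0$ then leaves only $\tfrac12\tfrac{d}{dt}\Phi^2\le\|e^{\tau\Lambda^{1/s}}S\|_{L^2}\Phi$, so $\Phi$ stays bounded by $\|e^{\tau_0\Lambda^{1/s}}\theta_0\|_{L^2}+2\|e^{\tau_0\Lambda^{1/s}}S\|_{L^2}$ and $\tau(t)=\tau_0\exp\bigl(-C\int_0^t\Phi\bigr)$ satisfies \eqref{lower bound on tau} with the linear exponent. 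If an uncancelled $C_\nu\Phi^2$ remainder or a superlinear $P$ were genuinely present, $\Phi$ would grow exponentially and the lower bound on $\tau$ would degrade to a double exponential, which is not what the theorem asserts. So to complete your argument you must verify that the commutator estimate closes with no leftover $\Phi^2$ term --- which it does, because every summand produced by the Fourier decomposition carries at least the factor $|k|^{1/(2s)}(|k|^{1/(2s)}+|l|^{1/(2s)})$ that is absorbed into the damping norm, once the order-$2$ smoothing of A3 has been used to control $\sum_j|j|^{1+1/(2s)}|\widehat{u^\nu}(j)|e^{\tau|j|^{1/s}}$ by $\|\Lambda^{1/(2s)}e^{\tau\Lambda^{1/s}}\theta^\nu\|_{L^2}$ and $\sum_j|j||\widehat{u^\nu}(j)|e^{\tau|j|^{1/s}}$ by $\|e^{\tau\Lambda^{1/s}}\theta^\nu\|_{L^2}$.
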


\begin{theorem}[Gevrey-class local wellposedness in the case $\nu=0$]\label{Gevrey-class local wellposedness}
Fix $s\ge1$, $r>\frac{d}{2}+\frac{3}{2}$ and $K_0>0$. Let $\theta^0(\cdot,0)=\theta_0$ and $S$ be of Gevrey-class $s$ with radius of convergence $\tau_0>0$ and satisfy
\begin{align}\label{bounds on gevrey norm of S and theta0'}
\|\Lambda^re^{\tau_0\Lambda^\frac{1}{s}}\theta^0(\cdot,0)\|_{L^2}\le K_0,\qquad\|\Lambda^re^{\tau_0\Lambda^\frac{1}{s}}S\|_{L^2}\le K_0.
\end{align}
For $\nu=0$, under the assumptions A1--A2 and A5$_1$, there exists $\bar{T},\bar{\tau}>0$ and a unique Gevrey-class $s$ solution $\theta^0$ to \eqref{abstract active scalar eqn} defined on $\mathbb{T}^d\times[0,\bar{T}]$ with radius of convergence at least $\bar{\tau}$. In particular, there exists a constant $C=C(K_0)>0$ such that for all $t\in[0,\bar{T}]$,
\begin{align}
\|\Lambda^re^{\bar{\tau}\Lambda^\frac{1}{s}}\theta^0(\cdot,t)\|_{L^2}\le C.\label{bound on solution 2}
\end{align}
Moreover, if the assumption A3 holds as well, then we have
\begin{align}
\|\Lambda^re^{\bar{\tau}\Lambda^\frac{1}{s}}\theta^\nu(\cdot,t)\|_{L^2}\le C,\,\forall\nu>0,\label{bound on solution 1}
\end{align}
where $\theta^\nu$ are Gevrey-class $s$ solutions to \eqref{abstract active scalar eqn} for $\nu>0$ as described in Theorem~\ref{Gevrey-class global wellposedness}.
\end{theorem}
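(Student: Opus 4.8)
The plan is to reduce the whole statement to a single \emph{a priori} estimate, uniform in $\nu\in[0,1]$, for the Gevrey energy
\[
Y_\nu(t):=\|\Lambda^re^{\tau(t)\Lambda^{1/s}}\theta^\nu(t)\|_{L^2}^2
\]
carried out along a \emph{decreasing} radius $\tau$ with $\tau(0)=\tau_0$. Differentiating in time and using $\partial_t\theta^\nu=-u^\nu\cdot\nabla\theta^\nu+S$ gives, with $\langle\cdot,\cdot\rangle$ the $L^2$ pairing,
\[
\tfrac12\tfrac{d}{dt}Y_\nu=\dot\tau\,\|\Lambda^{r+\frac{1}{2s}}e^{\tau\Lambda^{1/s}}\theta^\nu\|_{L^2}^2-N_\nu+F_\nu ,
\]
where $N_\nu=\langle\Lambda^re^{\tau\Lambda^{1/s}}(u^\nu\cdot\nabla\theta^\nu),\Lambda^re^{\tau\Lambda^{1/s}}\theta^\nu\rangle$ and $F_\nu=\langle\Lambda^re^{\tau\Lambda^{1/s}}S,\Lambda^re^{\tau\Lambda^{1/s}}\theta^\nu\rangle$. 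Since $\tau(t)\le\tau_0$ and the Fourier weight $e^{\tau|k|^{1/s}}$ increases in $\tau$, assumption \eqref{bounds on gevrey norm of S and theta0'} yields $|F_\nu|\le K_0\,Y_\nu^{1/2}$, and the $\dot\tau$ term is favourable because $\dot\tau<0$; everything therefore hinges on $N_\nu$.

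For $N_\nu$ I would first invoke A1, so that $u^\nu$ is divergence free and $\langle u^\nu\cdot\nabla(\Lambda^re^{\tau\Lambda^{1/s}}\theta^\nu),\Lambda^re^{\tau\Lambda^{1/s}}\theta^\nu\rangle=0$; this rewrites $N_\nu$ on the Fourier side as the commutator sum
\[
N_\nu=\sum_{j+k=\ell}\Bigl(|\ell|^re^{\tau|\ell|^{1/s}}-|k|^re^{\tau|k|^{1/s}}\Bigr)\,\widehat{u^\nu}(j)\cdot(ik)\,\widehat{\theta^\nu}(k)\;\overline{|\ell|^re^{\tau|\ell|^{1/s}}\widehat{\theta^\nu}(\ell)} .
\]
Here A5$_1$ is decisive: it gives $|\widehat{u^\nu}(j)|\le C_0\sqrt d\,|j|\,|\widehat{\theta^\nu}(j)|$ \emph{with $C_0$ independent of $\nu$}, so the order-one singular constitutive law is controlled uniformly down to $\nu=0$. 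Using the subadditivity $|\ell|^{1/s}\le|j|^{1/s}+|k|^{1/s}$ (valid for $s\ge1$) to split $e^{\tau|\ell|^{1/s}}\le e^{\tau|j|^{1/s}}e^{\tau|k|^{1/s}}$, the mean value theorem for $x\mapsto x^re^{\tau x^{1/s}}$, and a paraproduct-type splitting of the convolution according to which of $|j|,|k|,|\ell|$ is largest, the symbol difference is traded for a gain on the lowest frequency; estimating the resulting sums by Young's and Cauchy--Schwarz inequalities — the low-frequency factor being summable precisely because $r>\tfrac d2+\tfrac32$ — produces
\[
|N_\nu|\le C\,Y_\nu^{1/2}\,\|\Lambda^{r+\frac{1}{2s}}e^{\tau\Lambda^{1/s}}\theta^\nu\|_{L^2}^2+C\,Y_\nu^{3/2},\qquad C\text{ independent of }\nu .
\]
This is the heart of the matter and, I expect, the main obstacle: the one derivative lost in $u^\nu\sim\Lambda\theta^\nu$ has to be matched against the mere $\Lambda^{1/(2s)}$ supplied by $\dot\tau$, and that is possible only because of the commutator cancellation above.

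Granting the trilinear bound, I would run this estimate not on $\theta^\nu$ directly but on a frequency-truncated (Galerkin) scheme $\theta^{\nu,(n)}=P_n\theta^{\nu,(n)}$, valid for every $\nu\ge0$ (including $\nu=0$) since $P_n$ preserves the divergence-free cancellation and A5$_1$ controls the constitutive law uniformly; being trigonometric polynomials, the $\theta^{\nu,(n)}$ have finite Gevrey energy at every radius, so no regularity has to be assumed. Choosing $\tau(t)=\tau_0-2CK_0t$, as long as $Y^{\nu,(n)}\le4K_0^2$ the $\|\Lambda^{r+\frac1{2s}}\cdots\|^2$ terms cancel and $\tfrac{d}{dt}Y^{\nu,(n)}\le C_1(K_0)$, whence $Y^{\nu,(n)}(t)\le K_0^2+C_1t$; fixing $\bar T$ small enough that $K_0^2+C_1\bar T<4K_0^2$ and $\bar\tau:=\tau(\bar T)=\tau_0-2CK_0\bar T\ge\tau_0/2$ closes the bootstrap on $[0,\bar T]$, uniformly in $\nu\in[0,1]$ and $n$. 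The uniform bound controls $\theta^{\nu,(n)}$ in $C([0,\bar T];H^r)$ and, through the equation, $\partial_t\theta^{\nu,(n)}$ in $C([0,\bar T];H^{r-1})$, so Aubin--Lions produces, for each fixed $\nu\ge0$, a subsequence converging strongly in $C([0,\bar T];H^{r-\varepsilon})$ with $r-\varepsilon>\tfrac d2+1$ to a solution of \eqref{abstract active scalar eqn}; weak lower semicontinuity of $\|\Lambda^re^{\bar\tau\Lambda^{1/s}}\cdot\|_{L^2}$ together with $\bar\tau\le\tau(t)$ for $t\le\bar T$ gives the bound $2K_0$ for that limit. Uniqueness — proved by the same estimate applied to the difference of two solutions in a Gevrey norm with a sliding radius, again absorbing the derivative loss into the decay of the radius — identifies the $\nu=0$ limit as \emph{the} Gevrey solution $\theta^0$, establishing \eqref{bound on solution 2} under A1--A2 and A5$_1$; and, when A3 is also assumed, identifies the $\nu>0$ limit with the global solution $\theta^\nu$ of Theorem~\ref{Gevrey-class global wellposedness}, establishing \eqref{bound on solution 1} uniformly in $\nu>0$.
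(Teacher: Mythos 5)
Your proposal is correct and follows essentially the same route as the paper's proof (Proposition~\ref{Gevrey-class local existence nu=0}): a Gevrey-norm energy estimate along a linearly shrinking radius $\tau(t)$, with the nonlinear term bounded by $C\|\theta^\nu\|_{\tau,r}\,\|\Lambda^{1/(2s)}\theta^\nu\|_{\tau,r}^2$ using only the $\nu$-uniform bound $|\widehat{u^\nu}(j)|\le C_0\,|j|\,|\widehat{\theta^\nu}(j)|$ supplied by A5$_1$, absorbed into the $\dot\tau$ term, and closed by a bootstrap on $[0,\bar T]$ with $\bar\tau=\tau(\bar T)$, which yields \eqref{bound on solution 2} and, verbatim, \eqref{bound on solution 1}. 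The only differences are presentational: you insert the divergence-free commutator cancellation and supply the standard Galerkin/Aubin--Lions construction and a uniqueness argument, whereas the paper estimates the trilinear term directly on the Fourier side and leaves the construction and uniqueness implicit; the resulting differential inequality and the choice of $\bar T$, $\bar\tau$ are the same.
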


\begin{theorem}[Local wellposedness in Sobolev space in the case $\nu=0$ and Property A5$_2$ holds]\label{Local wellposedness in Sobolev space}
For $d\ge2$, we fix $s>\frac{d}{2}+1$. Assume that $\theta_0,S\in H^s(\mathbb{T}^d)$ have zero-mean on $\mathbb{T}^d$. Then for $\nu=0$, under the assumption A1--A2 and A5$_2$, there exists a $T>0$ and a unique smooth solution $\theta^0$ to \eqref{abstract active scalar eqn} such that
$$\theta^0\in L^\infty(0,T;H^s(\mathbb{T}^d)).$$
\end{theorem}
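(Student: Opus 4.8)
The plan is to treat \eqref{abstract active scalar eqn} with $\nu=0$ as a transport equation whose drift velocity is controlled by the scalar with \emph{no} gain of derivatives, and then to run the classical Kato-type local existence scheme: a regularization, a uniform $H^s$ a priori estimate, a compactness argument to pass to the limit, and an $L^2$ stability estimate for uniqueness. The two structural facts I would use are that A1 gives $\divv u^0=0$ and that A5$_2$ makes $u^0=\partial_x T^0[\theta^0]$ a bounded Fourier multiplier of order zero, so that $\|u^0\|_{H^\sigma}\le C\|\theta^0\|_{H^\sigma}$ for every $\sigma\ge0$; the remaining standing hypothesis A2 plays no further role in this regularity class.

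\textbf{The a priori estimate.} For a sufficiently regular solution I would apply $\Lambda^s$ to the equation, pair with $\Lambda^s\theta^0$ in $L^2$, and write $\Lambda^s(u^0\cdot\nabla\theta^0)=u^0\cdot\nabla\Lambda^s\theta^0+[\Lambda^s,u^0\cdot\nabla]\theta^0$. The first term integrates to zero because $\divv u^0=0$, and the Kato--Ponce commutator estimate bounds the second by $C(\|\nabla u^0\|_{L^\infty}\|\theta^0\|_{H^s}+\|u^0\|_{H^s}\|\nabla\theta^0\|_{L^\infty})$. Since $s>\tfrac d2+1$, Sobolev embedding together with the order-zero bound gives $\|\nabla u^0\|_{L^\infty}+\|\nabla\theta^0\|_{L^\infty}+\|u^0\|_{H^s}\le C\|\theta^0\|_{H^s}$, so that, combined with the trivial $L^2$ estimate (again using $\divv u^0=0$),
\[
\frac{d}{dt}\|\theta^0\|_{H^s}^2\le C\|\theta^0\|_{H^s}^3+C\|S\|_{H^s}\|\theta^0\|_{H^s}.
\]
A Riccati-type comparison would then produce a time $T=T(\|\theta_0\|_{H^s},\|S\|_{H^s})>0$ and a bound $\sup_{[0,T]}\|\theta^0(t)\|_{H^s}\le M(\|\theta_0\|_{H^s},\|S\|_{H^s})$.

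\textbf{Construction and passage to the limit.} I would regularize \eqref{abstract active scalar eqn} in a way that preserves the divergence-free structure --- e.g.\ Galerkin truncation, Friedrichs mollification of the velocity, or the parabolic regularization $\partial_t\theta^\varepsilon+u^\varepsilon\cdot\nabla\theta^\varepsilon=\varepsilon\Delta\theta^\varepsilon+S$ with $u^\varepsilon=\partial_x T^0[\theta^\varepsilon]$ --- so that for fixed $\varepsilon>0$ one has an ODE in $H^s$ with locally Lipschitz right-hand side, hence a unique local solution; the same energy computation, with any dissipative term only helping, extends it to the $\varepsilon$-independent interval $[0,T]$ with $\sup_{[0,T]}\|\theta^\varepsilon\|_{H^s}\le M$. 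From the equation and the fact that $H^{s-1}$ is a Banach algebra ($s-1>\tfrac d2$), $\partial_t\theta^\varepsilon$ is bounded in $L^\infty(0,T;H^{s-2})$, so the Aubin--Lions--Simon lemma yields a subsequence converging strongly in $C([0,T];H^{s'})$ for $s'$ with $\tfrac d2+1<s'<s$; linearity and A5$_2$ give $u^\varepsilon\to u^0=\partial_x T^0[\theta^0]$ in the same topology, which is enough to pass to the limit in $u^\varepsilon\cdot\nabla\theta^\varepsilon$, and weak-$*$ lower semicontinuity places $\theta^0\in L^\infty(0,T;H^s)$.

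\textbf{Uniqueness, and the main difficulty.} If $\theta_1,\theta_2$ were two such solutions with the same data, I would set $w=\theta_1-\theta_2$, $v=\partial_x T^0[w]$, so $\partial_t w+u_1\cdot\nabla w+v\cdot\nabla\theta_2=0$; pairing with $w$ in $L^2$ kills the transport term by $\divv u_1=0$, and since A5$_2$ also gives $\|v\|_{L^2}\le C\|w\|_{L^2}$ while $\|\nabla\theta_2\|_{L^\infty}\le C\|\theta_2\|_{H^s}$ is bounded on $[0,T]$, Gronwall forces $w\equiv0$. A standard argument (weak continuity of $t\mapsto\theta^0(t)$ into $H^s$ plus the energy identity) then upgrades this to $\theta^0\in C_w([0,T];H^s)$. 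I do not anticipate a genuine obstacle here: order zero is the favorable case, in contrast with the order-$1$ operator MG$^0$ that forces the Gevrey framework of Theorem~\ref{Gevrey-class local wellposedness}. The only points requiring care are that the velocity estimate carries no smoothing, so everything must be closed at regularity $s>\tfrac d2+1$ where $\|\nabla u^0\|_{L^\infty}$ and $\|\nabla\theta^0\|_{L^\infty}$ are controlled by $\|\theta^0\|_{H^s}$, and that the regularization must not destroy the divergence-free cancellation underlying every estimate above.
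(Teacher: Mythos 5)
Your proposal is correct, and the key analytic inputs coincide with the paper's: the Kato--Ponce commutator estimate, the cancellation $\langle u\cdot\nabla\Lambda^s\theta,\Lambda^s\theta\rangle=0$ from A1, the order-zero bound $\|u\|_{H^\sigma}\le C\|\theta\|_{H^\sigma}$ from A5$_2$, and closing everything at $s>\frac{d}{2}+1$ so that $\|\nabla u\|_{L^\infty}$ and $\|\nabla\theta\|_{L^\infty}$ are controlled by $\|\theta\|_{H^s}$. Where you genuinely diverge is the construction step. The paper does not regularize the nonlinear equation; it runs a Picard iteration on \emph{linear} transport equations with lagged velocity, $\partial_t\theta_n+u_{n-1}\cdot\nabla\theta_n=S$ with $u_{n-1}=\partial_x T^0[\theta_{n-1}]$ (each linear problem solved via a vanishing-viscosity approximation borrowed from \cite{FRV12}), proves the uniform $H^s$ bound by induction on $n$, and then shows the iterates contract in the weaker norm $L^\infty(0,T;H^{s-1})$ --- the same ``bounded in $H^s$, Cauchy in $H^{s-1}$'' pattern you invoke for uniqueness, but used already for existence, so no compactness lemma is needed. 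Your route instead regularizes the full nonlinear equation and extracts a limit by Aubin--Lions, then proves uniqueness separately by an $L^2$ stability estimate (which is cleaner and more explicit than the paper's, since the paper defers uniqueness to the argument in \cite{FRV12}). The trade-off is standard: the iteration gives a single self-contained contraction argument but requires solving a family of linear problems with non-smooth drift; your scheme needs a compactness step and care that the regularization preserves $\divv u^\varepsilon=0$ (as you note), but the regularized problems are elementary to solve. Both close, and your a priori inequality $\frac{d}{dt}\|\theta\|_{H^s}^2\le C\|\theta\|_{H^s}^3+C\|S\|_{H^s}\|\theta\|_{H^s}$ matches what the paper's estimates \eqref{estimate on theta n 1}--\eqref{estimate on theta n 2} encode for the iterates.
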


\begin{theorem}[Convergence of solutions as $\nu\rightarrow0$]\label{Convergence of solutions as nu goes to 0} Depending on the assumptions A$5_1$ and A$5_2$, we have the following cases:
\begin{itemize}
\item Assume that the hypotheses and notations of Theorem~\ref{Gevrey-class local wellposedness} are in force. Under the assumptions A3--A4, if $\theta^\nu$ and $\theta^0$ are Gevrey-class $s$ solutions to \eqref{abstract active scalar eqn} for $\nu>0$ and $\nu=0$ respectively with initial datum $\theta_0$ on $\mathbb{T}^d\times[0,\bar{T}]$ with radius of convergence at least $\bar{\tau}$ as described in Theorem~\ref{Gevrey-class local wellposedness}, then there exists $T<\bar{T}$ and $\tau=\tau(t)<\bar{\tau}$ such that, for $t\in[0,T]$, we have
\begin{align}\label{convergence}
\lim_{\nu\rightarrow0}\|(\Lambda^re^{\tau\Lambda^\frac{1}{s}}\theta^{\nu}-\Lambda^re^{\tau\Lambda^\frac{1}{s}}\theta^0)(\cdot,t)\|_{L^2}=0.
\end{align}
\item Assume that the hypotheses and notations of Theorem~\ref{Local wellposedness in Sobolev space} are in force. Under the assumptions A3--A4, for $d\ge2$ and $s>\frac{d}{2}+1$ and $t\in[0,T]$, we have
\begin{align}\label{convergence sobolev}
\lim_{\nu\rightarrow0}\|(\theta^{\nu}-\theta^0)(\cdot,t)\|_{H^{s-1}}=0.
\end{align}
\end{itemize}
\end{theorem}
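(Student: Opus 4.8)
The plan is to treat both statements through the equation for the difference $w^\nu:=\theta^\nu-\theta^0$. Since both solutions start from the same $\theta_0$, subtracting the two copies of \eqref{abstract active scalar eqn} gives
\begin{equation*}
\partial_t w^\nu+u^\nu\cdot\nabla w^\nu=-(u^\nu-u^0)\cdot\nabla\theta^0,\qquad w^\nu(\cdot,0)=0,
\end{equation*}
and I would split the forcing as $u^\nu-u^0=\partial_x T^\nu[w^\nu]+R^\nu$ with $R^\nu:=\partial_x(T^\nu-T^0)[\theta^0]$. The first piece is the self-interaction of the difference and is handled by the same estimates that underlie local well-posedness; the second piece $R^\nu$ is the genuinely $\nu$-dependent error, and Assumption~A4 is exactly what makes it vanish. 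Indeed $\widehat{R^\nu_j}(k)=\mathrm{i}k_i(\widehat{T^\nu_{ij}}(k)-\widehat{T^0_{ij}}(k))\widehat{\theta^0}(k)$, so with $g$ chosen so that $\widehat g(k)=|k|^{r+1}e^{\tau|k|^{1/s}}\widehat{\theta^0}(k)$ (in the Gevrey case) or $\widehat g(k)=|k|\widehat{\theta^0}(k)$ (in the Sobolev case), Assumption~A4 gives $\|\Lambda^r e^{\tau\Lambda^{1/s}}R^\nu\|_{L^2}\to0$, respectively $\|R^\nu\|_{L^2}\to0$, as $\nu\to0$; crucially $g$ genuinely lies in $L^2$ because $\theta^0$ has Gevrey radius $\bar\tau$ strictly larger than the radius $\tau$ at which the difference is measured, so the finitely many extra derivatives are swallowed by the gap $\bar\tau-\tau$ (respectively because $\theta^0\in H^s$ with $s\ge1$).

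For the Sobolev case (Assumption~A5$_2$), $\partial_x T^\nu$ is an operator of order zero uniformly in $\nu$, so the plain $L^2$ estimate for $w^\nu$ already closes: the transport term $\langle u^\nu\cdot\nabla w^\nu,w^\nu\rangle$ vanishes by divergence-freeness (Assumption~A1), the self-interaction term satisfies $|\langle(\partial_x T^\nu[w^\nu])\cdot\nabla\theta^0,w^\nu\rangle|\lesssim\|\partial_x T^\nu[w^\nu]\|_{L^2}\|\nabla\theta^0\|_{L^\infty}\|w^\nu\|_{L^2}\lesssim\|\theta^0\|_{H^s}\|w^\nu\|_{L^2}^2$ using A5$_2$ and $s>\tfrac d2+1$, and the error contributes $\lesssim\|R^\nu\|_{L^2}\|\theta^0\|_{H^s}\|w^\nu\|_{L^2}$. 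Hence $\tfrac{d}{dt}\|w^\nu\|_{L^2}^2\le C\|w^\nu\|_{L^2}^2+\varepsilon(\nu)\|w^\nu\|_{L^2}$ with $w^\nu(0)=0$ and $\varepsilon(\nu)\to0$, so Gr\"onwall yields $\|w^\nu(\cdot,t)\|_{L^2}\to0$ uniformly on $[0,T]$. To upgrade to $H^{s-1}$ I would interpolate against the uniform-in-$\nu$ bound $\|\theta^\nu\|_{L^\infty(0,T;H^s)},\|\theta^0\|_{L^\infty(0,T;H^s)}\le C$ — the $H^s$ bound on $\theta^\nu$ being $\nu$-independent precisely because A5$_2$ makes each term of its $H^s$ energy inequality $\nu$-uniform, just as in the proof of Theorem~\ref{Local wellposedness in Sobolev space} — to get $\|w^\nu(\cdot,t)\|_{H^{s-1}}\lesssim\|w^\nu(\cdot,t)\|_{L^2}^{1/s}\|w^\nu(\cdot,t)\|_{H^s}^{1-1/s}\to0$, which is \eqref{convergence sobolev}.

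For the Gevrey case (Assumption~A5$_1$) the operator is only of order one, so I would run the weighted energy estimate directly on $y(t):=\|\Lambda^r e^{\tau(t)\Lambda^{1/s}}w^\nu(\cdot,t)\|_{L^2}$ for a decreasing radius $\tau(t)<\bar\tau$, differentiating $y$ rather than $y^2$ so that the estimate stays linear. This produces the favorable dissipative term $\dot\tau\,\|\Lambda^{r+\frac1{2s}}e^{\tau\Lambda^{1/s}}w^\nu\|_{L^2}^2/y$ together with $\langle\Lambda^r e^{\tau\Lambda^{1/s}}\partial_t w^\nu,\Lambda^r e^{\tau\Lambda^{1/s}}w^\nu\rangle/y$; divergence-freeness (A1) removes the top-order transport contribution, and, since $\partial_x T^\nu$ loses only one derivative under A5$_1$ and commuting the analytic weight with a transport operator costs only a factor $\Lambda^{1/s}$, the standard Gevrey commutator bounds (resting on $|k|^{1/s}\le|k-j|^{1/s}+|j|^{1/s}$ for $s\ge1$) control the transport commutator and the self-interaction term by $C\,\|\Lambda^r e^{\tau\Lambda^{1/s}}w^\nu\|_{L^2}\|\Lambda^{r+\frac1{2s}}e^{\tau\Lambda^{1/s}}w^\nu\|_{L^2}$ plus lower-order terms, where $C$ depends only on the Gevrey norm of $\theta^\nu$ at radius $\bar\tau$, which is uniform in $\nu$ by \eqref{bound on solution 1} — this is where Assumption~A3 is invoked, and it also bounds the extra half-derivative in the weight. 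After dividing by $y$ and using Young's inequality, choosing $|\dot\tau|$ large enough (which is where $\tau(t)<\bar\tau$ and a possibly shrunken $T<\bar T$ enter) absorbs the $\|\Lambda^{r+\frac1{2s}}e^{\tau\Lambda^{1/s}}w^\nu\|_{L^2}^2$ factor into the dissipation, leaving $\tfrac{d}{dt}y\le C'y+\varepsilon(\nu)$ with $\varepsilon(\nu)\lesssim\|\Lambda^r e^{\tau\Lambda^{1/s}}R^\nu\|_{L^2}\|\Lambda^{r+1}e^{\tau\Lambda^{1/s}}\theta^0\|_{L^2}\to0$; Gr\"onwall from $y(0)=0$ then gives $y(t)\le\varepsilon(\nu)(e^{C't}-1)/C'\to0$ uniformly on $[0,T]$, i.e.\ \eqref{convergence}.

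The genuinely delicate step is the Gevrey case: one must keep the bookkeeping of the two radii $\bar\tau>\tau(t)>0$, the common interval $[0,T]\subset[0,\bar T]$, and the various Gevrey norms consistent, so that on the one hand A4 may legitimately be applied to the analytic-regular function $\Lambda^{r+1}e^{\tau\Lambda^{1/s}}\theta^0$ (which requires $\tau<\bar\tau$), and on the other the combined order-one-plus-weight loss in the difference estimate is genuinely dominated by the $\dot\tau\|\Lambda^{r+\frac1{2s}}e^{\tau\Lambda^{1/s}}w^\nu\|_{L^2}^2$ dissipation while the remaining forcing keeps its uniform-in-$\nu$ smallness. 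The Sobolev case, by contrast, is soft: the order-zero hypothesis A5$_2$ removes all derivative loss and the argument collapses to an $L^2$ estimate followed by interpolation.
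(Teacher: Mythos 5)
Your proposal is correct and follows essentially the same route as the paper: the same difference equation for $\theta^\nu-\theta^0$, the same splitting $u^\nu-u^0=\partial_x T^\nu[\theta^\nu-\theta^0]+\partial_x(T^\nu-T^0)[\theta^0]$ with Assumption~A4 killing the second piece, a shrinking-radius Gevrey energy estimate absorbing the derivative loss via $\dot\tau$ in the A5$_1$ case, and an $L^2$ estimate plus interpolation against the uniform $H^s$ bound in the A5$_2$ case. The only cosmetic difference is that you absorb the extra derivatives in the A4 error term using the gap $\bar\tau-\tau$, whereas the paper uses the room $2r>d+3$ at the same radius; both work.
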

\section{Preliminaries}\label{preliminaries}

We introduce the following notations. We say $(\theta,u)$ is a weak solution to \eqref{abstract active scalar eqn} if they solve the system in the {\it weak sense}, that means for all $\phi\in C^{\infty}_0(\mathbb{T}^d\times(0,\infty),\R^d)$, we have
\begin{align*}
\int_0^\infty\int_{\mathbb{T}^d}(\partial_t \phi+u\cdot\nabla\phi)\theta(x,t)dxdt+\int_{\mathbb{T}^d}\phi(x,0)\theta_0(x)dx=\int_0^\infty\int_{\mathbb{T}^d}\phi S(x)dxdt.
\end{align*}
$W^{s,p}$ is the usual inhomogeneous Sobolev space with norm $\|\cdot\|_{W^{s,p} }$. For simplicity, we write $\|\cdot\|_{L^p}=\|\cdot\|_{L^p(\mathbb{T}^d)}$, $\|\cdot\|_{W^{s,p}}=\|\cdot\|_{W^{s,p}(\mathbb{T}^d)}$, etc. unless otherwise specified. We also write $H^s=W^{s,2}$.

\noindent We define $\|\cdot\|_{L.L. }$ to be the Log-Lipschitz norm given by
\begin{align*}
\|f\|_{L.L. }=\sup_{x\neq y}\frac{|f(x)-f(y)|}{|x-y|(1+|\log|x-y||)}.
\end{align*}

\noindent As in \cite{FV11b}, \cite{MV11}, for $s\ge1$, the Gevrey-class $s$ is given by
\begin{align*}
\bigcup_{\tau>0}\mathcal{D}(\Lambda^re^{\tau\Lambda^\frac{1}{s}}),
\end{align*}
for any $r\ge0$, where
\begin{align*}
\|\Lambda^re^{\tau\Lambda^\frac{1}{s}}f\|^2_{L^2}=\sum_{k\in\mathbb{Z}^d_*}|k|^{2r}e^{2\tau|k|^\frac{1}{s}}|\hat{f}(k)|^2,
\end{align*}
where $\tau=\tau(t)>0$ denotes the radius of convergence and $\Lambda=(-\Delta)^\frac{1}{2}$.

We also recall the following facts from the literature (see for example Azzam-Bedrossian \cite{AB11}, Bahouri-Chemin-Danchin \cite{BCD11} and Ziemer \cite{Z89}): there exists a constant $C>0$ such that

\begin{align}
\|f\|_{L.L. }&\le C \|\nabla f\|_{BMO},\label{L.L. bound in terms of BMO}\\
\|f\|_{BMO }&\le C \|f\|_{W^{1,d} },\label{BMO bound in terms of W}\\
\|f\|_{L^\infty }&\le C \|f\|_{W^{2,d} },\label{L infty bound 1}
\end{align}
and for $q>d$, there are constants $C(q)>0$ such that
\begin{align}
\|f\|_{L^\infty }&\le C(q) \|f\|_{W^{1,q} }.\label{L infty bound 2}
\end{align}
If $k>l$ and $k-\frac{d}{p}>l-\frac{d}{q}$, then we have
\begin{align}
\|f\|_{W^{l,q}}\le C\|f\|_{W^{k,p}}.\label{Kondrachov embedding theorem}
\end{align} 

\section{The non-diffusive active scalar equations}\label{The non-diffusive active scalar equations}

In this section, we study the non-diffusive equations \eqref{abstract active scalar eqn} for $\nu\ge0$. Depending on the values of $\nu$, we subdivide it into two cases, namely $\nu>0$ and $\nu=0$.

\subsection{The case where $\nu>0$}\label{Case for nu>0}

In this subsection we study the non-diffusive equations \eqref{abstract active scalar eqn} for $\nu>0$. First, given $\nu>0$ and initial datum $\theta_0\in W^{s,d}$ with $s>0$, we prove that  \eqref{abstract active scalar eqn} has a unique global-in-time solution $\theta^\nu\in W^{s,d}$. 

We begin with the following lemma which gives {\it a priori} bounds on $\theta^\nu$.
\begin{lemma}
Let $\nu>0$ and $\theta_0\in W^{s,p}$ for $s>0$ and $p>1$, and let $S=S(x)$ be a $C^\infty$-smooth source term. Then we have
\begin{align}\label{Sobolev estimate_1}
\|\theta^\nu(\cdot,t)\|_{L^p}\le \|\theta_0\|_{L^p}+t\|S\|_{L^\infty},
\end{align}
and we also have {\it a priori} bounds on $\theta^\nu$
\begin{align}\label{Sobolev estimate_2}
\|\theta^\nu(\cdot,t)\|_{W^{s,p}}\le C \|\theta_0\|_{W^{s,p}}\exp\left(C\int_0^t \|\nabla u^\nu(\cdot,\tilde t)\|_{L^\infty}d\tilde t+Ct\|S\|_{W^{s,p}}\right).
\end{align}
Here $C>0$ is a constant which depends on $p$ and the spatial dimension $d$.
\end{lemma}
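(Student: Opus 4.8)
The plan is to treat \eqref{abstract active scalar eqn} as a transport equation with a given, smooth-enough divergence-free drift $u^\nu$, and to use the assumption A3 to gain the regularity of $u^\nu$ in terms of $\theta^\nu$. The first estimate \eqref{Sobolev estimate_1} is the $L^p$ bound. Testing the equation against $|\theta^\nu|^{p-2}\theta^\nu$ (or, rigorously, working on a regularized equation and passing to the limit), the transport term $u^\nu\cdot\nabla\theta^\nu$ contributes $\frac1p\int_{\mathbb{T}^d}u^\nu\cdot\nabla|\theta^\nu|^p\,dx = -\frac1p\int_{\mathbb{T}^d}(\divv u^\nu)|\theta^\nu|^p\,dx = 0$, since A1 gives $\divv u^\nu=0$. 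Hence $\frac{d}{dt}\|\theta^\nu\|_{L^p}\le\|S\|_{L^p}\le\|S\|_{L^\infty}$ (the last by the finite measure of $\mathbb{T}^d$, absorbing the volume into a constant or noting the $L^\infty$ bound directly controls $L^p$ up to a fixed factor), and integrating in time yields \eqref{Sobolev estimate_1}.

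For \eqref{Sobolev estimate_2}, I would use the standard commutator/energy method for transport equations in Sobolev spaces. Apply $\Lambda^s$ (or a Littlewood--Paley based $W^{s,p}$ norm) to the equation, pair with the appropriate dual quantity, and use the Kato--Ponce / commutator estimate to handle $[\Lambda^s,u^\nu\cdot\nabla]\theta^\nu$, which is bounded in $L^p$ by $C\big(\|\nabla u^\nu\|_{L^\infty}\|\theta^\nu\|_{W^{s,p}}+\|u^\nu\|_{W^{s,p}}\|\nabla\theta^\nu\|_{L^\infty}\big)$; since $u^\nu=\partial_x T^\nu[\theta^\nu]$ with $T^\nu$ of smoothing order $3$ by A3, the term $\|u^\nu\|_{W^{s,p}}$ is controlled by $\|\theta^\nu\|_{W^{s,p}}$ (indeed by a lower norm), so the whole commutator is $\le C\|\nabla u^\nu\|_{L^\infty}\|\theta^\nu\|_{W^{s,p}}$ up to lower-order terms that can be absorbed. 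The divergence-free condition again kills the top-order piece of the transport term after integration by parts. This gives the differential inequality
\[
\frac{d}{dt}\|\theta^\nu(\cdot,t)\|_{W^{s,p}}\le C\|\nabla u^\nu(\cdot,t)\|_{L^\infty}\|\theta^\nu(\cdot,t)\|_{W^{s,p}}+C\|S\|_{W^{s,p}},
\]
and Gr\"onwall's inequality produces exactly \eqref{Sobolev estimate_2}, with the $Ct\|S\|_{W^{s,p}}$ term coming from integrating the forcing contribution (the forcing term, after being multiplied by the integrating factor, is bounded since the exponent is nonnegative, giving the stated form).

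The main obstacle is making the energy estimate rigorous rather than formal: one must first establish existence of a sufficiently smooth approximate solution (e.g. by mollifying the drift, or by a Galerkin/viscous regularization scheme) for which the manipulations — integration by parts using $\divv u^\nu=0$, the commutator estimate, pairing with $|\theta^\nu|^{p-2}\theta^\nu$ — are justified, and then pass to the limit using the uniform bounds just derived. A second, more technical point is that the Kato--Ponce commutator estimate in $L^p$ for general $1<p<\infty$ and the fact that $\partial_x T^\nu$ maps $W^{s,p}\to W^{s+2,p}$ (hence $\|\nabla u^\nu\|_{L^\infty}$ and $\|u^\nu\|_{W^{s,p}}$ are both controlled) must be invoked carefully; both are standard consequences of A3 together with the Mikhlin--H\"ormander multiplier theorem, so I expect no genuine difficulty there, only bookkeeping. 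Note that \eqref{Sobolev estimate_2} is stated with $\|\nabla u^\nu\|_{L^\infty}$ still appearing on the right-hand side — it is a conditional a priori bound — so no estimate of $\|\nabla u^\nu\|_{L^\infty}$ in terms of the data is needed at this stage; that will be supplied subsequently (via A2, the logarithmic inequality \eqref{L.L. bound in terms of BMO}--\eqref{BMO bound in terms of W}, and \eqref{Sobolev estimate_1}) to close the argument and obtain the single-exponential growth \eqref{exp growth}.
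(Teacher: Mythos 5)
Your overall strategy is the same as the paper's: the $L^p$ bound \eqref{Sobolev estimate_1} via the divergence-free cancellation in an $L^p$ energy (or characteristics) argument, and the $W^{s,p}$ bound via a commutator estimate for the transport term followed by Gr\"onwall. The step that does not hold up is your treatment of the second commutator term. You invoke Kato--Ponce in the form $\|[\Lambda^s,u^\nu\cdot\nabla]\theta^\nu\|_{L^p}\le C\big(\|\nabla u^\nu\|_{L^\infty}\|\theta^\nu\|_{W^{s,p}}+\|u^\nu\|_{W^{s,p}}\|\nabla\theta^\nu\|_{L^\infty}\big)$ and argue the second term is lower order because A3 controls $\|u^\nu\|_{W^{s,p}}$ by a weaker norm of $\theta^\nu$. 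But the problematic factor is $\|\nabla\theta^\nu\|_{L^\infty}$, not $\|u^\nu\|_{W^{s,p}}$: the lemma is asserted for every $s>0$, and for small $s$ (anything below roughly $1+\tfrac{d}{p}$) the quantity $\|\nabla\theta^\nu\|_{L^\infty}$ is not controlled by $\|\theta^\nu\|_{W^{s,p}}$ --- it need not even be finite --- so this term cannot be absorbed into $C\|\nabla u^\nu\|_{L^\infty}\|\theta^\nu\|_{W^{s,p}}$, and the differential inequality you write down does not follow from the estimate you quoted. Choosing other H\"older exponents in the second Kato--Ponce term does not help, since any such splitting still places a full derivative of $\theta^\nu$ in a Lebesgue space not reachable from $W^{s,p}$ when $s<1$.

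The repair is precisely what the paper does: localize in frequency first. Apply a dyadic block $\Delta_k$ to the equation, so that $(\partial_t+u^\nu\cdot\nabla)\Delta_k\theta^\nu=R_k+\Delta_kS$ with $R_k=u^\nu\cdot\nabla\Delta_k\theta^\nu-\Delta_k(u^\nu\cdot\nabla\theta^\nu)$, and use the transport-adapted commutator estimate of Bahouri--Chemin--Danchin, which in the relevant range of $s$ gives $\|R_k\|_{L^p}\le C\,C_k(t)\,2^{-ks}\|\nabla u^\nu\|_{L^\infty}\|\theta^\nu\|_{W^{s,p}}$ with $(C_k)$ of unit $\ell^p$ norm, i.e.\ with no $\|\nabla\theta^\nu\|_{L^\infty}$ factor at all; the additional term that this estimate produces for large $s$ involves $\nabla u^\nu$ in a Besov norm of order $s-1$, which the two orders of smoothing supplied by A3 convert into a norm of $\theta^\nu$ of order $s-2$, controllable by \eqref{Sobolev estimate_1} or by the lower-regularity cases already established. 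With that substitution, the rest of your argument --- cancellation of the top-order transport term using $\divv u^\nu=0$, Gr\"onwall, the regularization scheme to justify the formal manipulations, and the later closing of the estimate via \eqref{L.L. bound in terms of BMO}--\eqref{BMO bound in terms of W} --- matches the paper and yields \eqref{Sobolev estimate_2}.
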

\begin{proof}[\bf Proof]
The assertion \eqref{Sobolev estimate_1} follows from standard energy estimates. And for \eqref{Sobolev estimate_2}, we let $\Delta_k$'s be the dyadic blocks for $k\in\{-1\}\cup\N$. Applying $\Delta_k$ on \eqref{abstract active scalar eqn},
\begin{align*}
(\partial_t+u\cdot\nabla)(\Delta_k\theta^\nu)=R_k+\Delta_k S,
\end{align*}
where $R_k=u\cdot\nabla\Delta_k\theta^\nu-\Delta_k(u\cdot\nabla\theta^\nu)$. Since 
\begin{align*}
\|\Delta_k S(\cdot,t)\|_{L^p}+\|R_k(\cdot,t)\|_{L^p}\le CC_k(t)\left[2^{-ks}\|\nabla u^\nu(\cdot,t)\|_{L^\infty}\|\theta^\nu(\cdot,t)\|_{W^{s,p}}+\|S\|_{W^{s,p}}\right]
\end{align*}
with $\|C_k(\cdot,t)\|_{L^p}=1$, which implies \eqref{Sobolev estimate_2}. 
\end{proof}

\begin{proof}[\bf Proof of Theorem~\ref{Wellposedness in Sobolev space}]
We divide it into several cases.

\begin{case}$s=0$.
First, we observe that, by the assumption A3, $\partial_{x_i} T_{ij}^{\nu}$ is a smoothing operator of degree 2 for $\nu>0$. Hence with the help of the Fourier multiplier theorem (see Stein \cite{S70}), given $p>1$, there exists some constant $C=C(p,d)>0$ such that
\begin{align}\label{Fourier multiplier theorem}
\|u^\nu(\cdot,t)\|_{W^{2,p}}\le C \|\theta^\nu(\cdot,t)\|_{L^p}.
\end{align}
Together with \eqref{Sobolev estimate_1}, for $p>1$, if $\theta_0\in L^p$ and $S\in C^\infty$, then we have
\begin{align}\label{bound on u}
\|u^\nu(\cdot,t)\|_{W^{2,p}}\le C\left(\|\theta_0\|_{L^p}+t\|S\|_{L^\infty}\right),
\end{align}
where $t\|S\|_{L^\infty}=\int_{0}^{\infty}\|S(\cdot,\tilde{t})\|_{L^\infty}d\tilde{t}$. Next, using embedding theorems \eqref{L.L. bound in terms of BMO}-\eqref{L infty bound 1} and \eqref{bound on u} for $p=d$, we have
\begin{align}\label{infty bound}
\|u^\nu(\cdot,t)\|_{L^\infty}&\le C\|u^\nu(\cdot,t)\|_{W^{2,d}}\notag\\
&\le C\left(\|\theta_0\|_{L^p}+t\|S\|_{L^\infty}\right),
\end{align}
and
\begin{align}\label{LL bound}
\|u^\nu(\cdot,t)\|_{L.L.}&\le C \|\nabla u^\nu(\cdot,t)\|_{BMO}\notag\\
&\le C \|\nabla u^\nu(\cdot,t)\|_{W^{1,d}}\notag\\
&\le C \|u^\nu(\cdot,t)\|_{W^{2,d}}\notag\\
&\le C\left(\|\theta_0\|_{L^d}+t\|S\|_{L^\infty}\right),
\end{align} 
which shows that both $\|u^\nu(\cdot,t)\|_{L.L. }$ and $\|u^\nu(\cdot,t)\|_{L^\infty}$ are bounded in terms of $\theta_0$ and $S$. A bound on the Log-Lipschitzian norm of $u$ is essential to assure the existence and uniqueness of the {\it flow map}, and hence the existence and uniqueness of the solution. 

More precisely, to prove the existence of flow map, we consider the standard mollifier $\theta\in C^\infty_0 $, and we set $\theta_0^{(n)}=\theta_n *\theta_0$ for $n\in\N$ and $\theta_n(x)=n^d\theta(nx)$. By a standard argument, given $\nu>0$, we can obtain a sequence of global smooth solution $(\theta_{(n)}^\nu,u_{(n)}^\nu)$ to \eqref{abstract active scalar eqn} with $u_{(n)}^\nu=\partial_{x_i} T_{ij}^{\nu}[\theta_{(n)}^\nu]$. Define $\psi_n(x,t)$ to be the flow map given by
\begin{align}\label{flow map}
\partial_t\psi_n(x,t)=u_{(n)}^\nu(\psi_n(x,t),t).
\end{align}
One can show (for example in \cite{BK12}) that
\begin{align}\label{bound on flow map}
\|\psi_n(\cdot,t)\|_{*}\le C \exp\left(\int_0^t\|u_{(n)}^\nu(\cdot,\tilde{t})\|_{L.L.}d\tilde{t}\right),
\end{align}
where $C>0$ is independent of $\nu$ and $n$, and the norm $\|\cdot\|_{*}$ is given by
\begin{align*}
\|\psi\|_{*}=\sup_{x\neq y}\Phi(|\psi(x)-\psi(y)|,|x-y|)
\end{align*}
with
\begin{align*}
\Phi(r,s)=\left\{ \begin{array}{l}
\mbox{$\max\{\frac{1+|\log(s)|}{1+|\log(r)|},\frac{1+|\log(r)|}{1+|\log(s)|}\}$, if $(1-s)(1-r)\ge0$,}\\
\mbox{$(1+|\log(s)|)(1+|\log(r)|)$, if $(1-s)(1-r)\le0$.}
\end{array}\right.
\end{align*}
Using \eqref{LL bound} (with $u^\nu$ replaced by $u_{(n)}^\nu$) and \eqref{bound on flow map}, we obtain
\begin{align}\label{bound on psi 1}
|\psi_n(x_1,t)-\psi_n(x_2,t)|\le \alpha(t)|x_1-x_2|^{\beta(t)}
\end{align}
for all $(x_1,t),(x_2,t)\in\R^d\times\R^+$, where $\alpha(t),\beta(t)$ are some continuous functions which depends on $\theta_0$ and $S$. Furthermore, for $t_1,t_2\ge0$, using \eqref{infty bound} (with $u^\nu$ replaced by $u_{(n)}^\nu$),
\begin{align}\label{bound on psi 2}
|\psi_n(x,t_1)-\psi_n(x,t_2)|&\le C |t_2-t_1|(\|u_{(n)}^\nu(\cdot,t_1)\|_{L^\infty}+\|u_{(n)}^\nu(\cdot,t_2)\|_{L^\infty})\notag\\
&\le C |t_2-t_1|(\|\theta_0\|_{L^p}+\max\{t_1,t_2\}\|S\|_{L^\infty}).
\end{align}
Applying the estimates \eqref{bound on psi 1} and \eqref{bound on psi 2}, we see that the family $\{\psi_n\}_{n\in\N}$ is bounded and equicontinuous on every compact set in $\R^d\times\R^+$. By Arzela-Ascoli theorem, it implies the existence of a limiting trajectory $\psi(x,t)$ as $n\rightarrow\infty$. Performing the same analysis for $\{\psi_n^{-1}\}$, where $\psi_n^{-1}$ is the inverse of $\psi_n$, we see that $\psi(x,t)$ is a Lebesgue measure preserving homeomorphism with
\begin{align*}
\|\psi^{-1}(\cdot,t)\|_{*}=\|\psi(\cdot,t)\|_{*}&\le C \exp\left(\int_0^tC\left(\|\theta_0\|_{L^d}+\tilde{t}\|S\|_{L^\infty}\right)d\tilde{t}\right)\\
&\le C \exp\left(C\left(t\|\theta_0\|_{L^d}+t^2\|S\|_{L^\infty}\right)\right), \forall t\ge0.
\end{align*}
Define $\theta^\nu(x,t)=\theta_0(\psi^{-1}(x,t))$ and $u^\nu=\partial_{x_i} T_{ij}^{\nu}[\theta^\nu]$. The rest of the proof then follows from the one given in \cite{FS15}, which shows that $(\theta^\nu,u^\nu)$ is a weak solution to \eqref{abstract active scalar eqn}. 

To show that $(\theta^\nu,u^\nu)$ is unique, let $T>0$ and $\nu>0$, and suppose that $(\theta^{\nu,1},u^{\nu,1})$ and $(\theta^{\nu,2},u^{\nu,2})$ solve \eqref{abstract active scalar eqn} on $\mathbb{T}^d\times[0,T]$ with $\theta^{\nu,1}(\cdot,0)=\theta^{\nu,2}(\cdot,0)=\theta_0$. Following the similar argument given in \cite{BCD11}, there exists a constant $C>0$ such that for all $\delta\in(0,1)$ and $k\in\{-1\}\cup\N$, we have
\begin{align*}
&\|\Delta_k (\theta^{\nu,1}-\theta^{\nu,2})(\cdot,t)\|_{L^\infty}\\
&\le 2^{k\delta}(k+1)C(\|u^{\nu,1}(\cdot,t)\|_{\overline{L.L.}}+\|u^{\nu,2}(\cdot,t)\|_{\overline{L.L.}})\|(\theta^{\nu,1}-\theta^{\nu,2})(\cdot,t)\|_{B^{-\delta}_{\infty,\infty}}, \forall t\in[0,T],
\end{align*}
where $\|\cdot\|_{\overline{L.L.}}=\|\cdot\|_{L^\infty}+\|\cdot\|_{L.L.}$. We define
\begin{align*}
\bar{t}=\sup\left\{t\in[0,T]:C\int_0^t(\|u^{\nu,1}(\cdot,\tilde{t})\|_{\overline{L.L.}}+\|u^{\nu,2}(\cdot,\tilde{t})\|_{\overline{L.L.}})d\tilde{t}\le\frac{1}{2}\right\},
\end{align*}
then by the bounds \eqref{bound on u} and \eqref{infty bound}, $\bar{t}$ is well-defined. We let 
\begin{align*}
\delta_{\bar{t}}=C\int_0^{\bar{t}}(\|u^{\nu,1}(\cdot,\tilde{t})\|_{\overline{L.L.}}+\|u^{\nu,2}(\cdot,\tilde{t})\|_{\overline{L.L.}})d\tilde{t}.
\end{align*} 
Using Theorem~3.28 in \cite{BCD11}, for all $k\ge-1$ and $t\in[0,\bar{t}]$,
\begin{align*}
2^{-k\delta_{\bar{t}}}\|\Delta_k (\theta^{\nu,1}-\theta^{\nu,2})(\cdot,t)\|_{L^\infty}\le \frac{1}{2}\sup_{t\in[0,\bar{t}]}\|(\theta^{\nu,1}-\theta^{\nu,2})(\cdot,t)\|_{B^{-\delta_{\bar{t}}}_{\infty,\infty}}.
\end{align*}
Summing over $k$ and taking supremum over $[0,\bar{t}]$, we conclude that $\theta^{\nu,1}=\theta^{\nu,2}$ on $[0,\bar{t}]$. By repeating the argument a finite number of times, we obtain the uniqueness on the whole interval $[0,T]$. 
\end{case}

\begin{case} $s>0$. We only need {\it a priori} bounds on $\theta^\nu$. In view of \eqref{Sobolev estimate_2} with $p=d$, it remains to obtain a bound on $\|\nabla u^\nu(\cdot,t)\|_{L^\infty}$. We claim
\begin{align}\label{bound on nabla u}
\|\nabla u^\nu(\cdot,t)\|_{L^\infty}\le C (\|\theta_0\|_{W^{s,d}}+t\|S\|_{L^\infty}),\forall t\ge0.
\end{align}
We subdivide into two subcases.

\noindent{\bf Case 2(a):} $0<s<1$.
Define $q=\frac{d}{1-s}$. Then $q>d$ and using the embedding that $W^{s,d} \hookrightarrow L^q$, we have
\begin{align*}
\|\theta_0\|_{L^q}\le C \|\theta_0\|_{W^{s,d}}.
\end{align*}
Therefore using \eqref{L infty bound 2}, \eqref{Sobolev estimate_1} and \eqref{Fourier multiplier theorem} with $p=d$, we conclude that
\begin{align*}
\|\nabla u^\nu(\cdot,t)\|_{L^\infty}&\le C(q)\|u^\nu(\cdot,t)\|_{W^{2,q}}\\
&\le C (\|\theta_0\|_{W^{s,d}}+t\|S\|_{L^\infty}).
\end{align*}

\noindent{\bf Case 2(b):} $s\ge1$.
Using \eqref{Fourier multiplier theorem}, we take $p=2d$, which gives
\begin{align*}
\|\nabla u^\nu(\cdot,t)\|_{W^{1,2d}}\le C\|\theta^\nu(\cdot,t)\|_{L^{2d}}.
\end{align*}
On the other hand, we apply \eqref{Sobolev estimate_1}, and the embeddings $W^{1,2d} \hookrightarrow L^{\infty}$ and $W^{s,d} \hookrightarrow W^{\frac{1}{2},d} \hookrightarrow L^{2d}$ to get
\begin{align*}
\|\nabla u^\nu(\cdot,t)\|_{L^\infty}&\le C\|\nabla u^\nu(\cdot,t)\|_{W^{1,2d}}\\
&\le C \|\theta^\nu(\cdot,t)\|_{L^{2d}}\\
&\le C \left(\|\theta_0\|_{L^{2d}}+t\|S\|_{L^\infty}\right)\\
&\le C \left(\|\theta_0\|_{W^{\frac{1}{2},d}}+t\|S\|_{L^\infty}\right)\\
&\le C \left(\|\theta_0\|_{W^{s,d}}+t\|S\|_{L^\infty}\right).
\end{align*}
We substitute the above estimates on $\|\nabla u^\nu(\cdot,t)\|_{L^\infty}$ into \eqref{Sobolev estimate_2} with $p=d$ and obtain the desired {\it a priori} required bounds on $\theta^\nu$, namely
\begin{align}\label{Wsd solution}
\|\theta^\nu(\cdot,t)\|_{W^{s,d}}&\le C \|\theta_0\|_{W^{s,d}}\exp\left(C\int_0^t \|\nabla u^\nu(\cdot,\tilde t)\|_{L^\infty}d\tilde t+Ct\|S\|_{W^{s,d}}\right)\notag\\
&\le C \|\theta_0\|_{W^{s,d}}\exp\left(C\int_0^t \left(\|\theta_0\|_{W^{s,d}}+\tilde{t}\|S\|_{L^\infty}\right)d\tilde t+Ct\|S\|_{W^{s,d}}\right)\notag\\
&\le C \|\theta_0\|_{W^{s,d}}\exp\left(C\left(t\|\theta_0\|_{W^{s,d}}+t^2\|S\|_{L^\infty}+t\|S\|_{W^{s,d}}\right)\right).
\end{align}
Finally, the single exponential growth in time on $\|\nabla\theta(\cdot,t)\|_{L^d}$ follows readily from \eqref{Wsd solution} and Poinca\'{r}e inequality. We finish the proof of Theorem~\ref{Wellposedness in Sobolev space}. 
\end{case}
\end{proof}
\begin{remark}\label{existence of Hr solution}
For $r>0$, if we take $s>\max\{\frac{2r+2-d}{2},r\}$, then by the Sobolev embedding theorem \eqref{Kondrachov embedding theorem} and the bound \eqref{Wsd solution}, we have
\begin{align*}
\|\theta^\nu(\cdot,t)\|_{H^{r}}&\le C\|\theta^\nu(\cdot,t)\|_{W^{s,d}}\notag\\
&\le C \|\theta_0\|_{W^{s,d}}\exp\left(C\left(t\|\theta_0\|_{W^{s,d}}+t^2\|S\|_{L^\infty}+t\|S\|_{W^{s,d}}\right)\right),
\end{align*}
where $H^r=W^{r,2}$. Hence Theorem~\ref{Wellposedness in Sobolev space} implies that there exists a unique global-in-time $H^r$ solution $\theta^\nu$ to \eqref{abstract active scalar eqn} whenever $\theta_0\in W^{s,d}$ and $S\in C^\infty$ for $r>0$ and $s>\max\{\frac{2r+2-d}{2},r\}$.
\end{remark}
Next we study the Gevrey-class $s$ solutions to \eqref{abstract active scalar eqn} for $\nu>0$ when the initial datum $\theta_0$ and forcing term $S$ are in the same Gevrey-class. Recall that for $\nu>0$, there exists a constant $C_{\nu}>0$ such that for all $1\le i,j\le d$, $$|\widehat T^{\nu}_{ij}(k)|\le C_{\nu}|k|^{-3}, \forall k\in\mathbb{Z}^d,$$
which gives 2-orders of smoothing 
\begin{equation}\label{2 order smoothing}
\|u^\nu\|_{H^2}\le C_{\nu}\|\theta^\nu\|_{L^2}.
\end{equation}
To prove the global-in-time existence as claimed in Theorem~\ref{Gevrey-class global wellposedness}, we give the estimates of $\theta^\nu$ as follows. We take $L^2$-inner product of \eqref{abstract active scalar eqn}$_1$ with $e^{2\tau\Lambda^{\frac{1}{s}}}\theta^\nu$ and obtain
\begin{align}\label{a priori estimates on theta}
\frac{1}{2}\frac{d}{dt}\|e^{\tau\Lambda^{\frac{1}{s}}}\theta^\nu\|^2_{L^2}-\dot{\tau}\|\Lambda^\frac{1}{2s}e^{\tau\Lambda^{\frac{1}{s}}}\theta^\nu\|^2_{L^2}&=-\langle u^\nu\cdot\nabla\theta^\nu,e^{2\tau\Lambda^{\frac{1}{s}}}\theta^\nu\rangle+\langle S,e^{2\tau\Lambda^{\frac{1}{s}}}\theta^\nu\rangle\notag\\
&\le\Big|-\langle u^\nu\cdot\nabla\theta^\nu,e^{2\tau\Lambda^{\frac{1}{s}}}\theta^\nu\rangle\Big|+\|e^{\tau\Lambda^\frac{1}{s}}S\|_{L^2}\|e^{\tau\Lambda^\frac{1}{s}}\theta^\nu\|_{L^2}.
\end{align}
The following lemma gives the estimates on the term $-\langle u^\nu\cdot\nabla\theta^\nu,e^{2\tau\Lambda^{\frac{1}{s}}}\theta^\nu\rangle$.
\begin{lem}\label{estimate on stretching term}
For $\nu>0$ and $s\ge1$, we have
\begin{equation}\label{estimate on cross term}
\left|-\langle e^{\tau\Lambda^{\frac{1}{s}}}(u^\nu\cdot\nabla\theta^\nu),e^{\tau\Lambda^{\frac{1}{s}}}\theta^\nu\rangle\right|\le C\tau\|e^{\tau\Lambda^\frac{1}{s}}\theta^\nu\|_{L^2}\|\Lambda^\frac{1}{2s}e^{\tau\Lambda^\frac{1}{s}}\theta^\nu\|^2_{L^2},
\end{equation}
where $C=C(\nu)>0$ depends on $\nu$.
\end{lem}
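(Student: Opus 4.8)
The plan is to estimate the trilinear term $-\langle e^{\tau\Lambda^{1/s}}(u^\nu\cdot\nabla\theta^\nu),e^{\tau\Lambda^{1/s}}\theta^\nu\rangle$ on the Fourier side, exploiting the divergence-free condition on $u^\nu$ (which, by assumption A1, holds for all $\nu\ge 0$) and the $2$-orders of smoothing \eqref{2 order smoothing} available when $\nu>0$. First I would write the term in Fourier variables as a sum over $k+j+m=0$ (or, after using incompressibility to move one derivative, as a convolution sum $\sum_{k}\sum_{j}e^{\tau|k|^{1/s}}\widehat{u^\nu}(j)\cdot i(k-j)\,\widehat{\theta^\nu}(k-j)\,e^{\tau|k|^{1/s}}\overline{\widehat{\theta^\nu}(k)}$), and then use the elementary inequality $|k|^{1/s}\le |j|^{1/s}+|k-j|^{1/s}$, valid since $s\ge 1$, to split $e^{\tau|k|^{1/s}}\le e^{\tau|j|^{1/s}}e^{\tau|k-j|^{1/s}}$. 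This is the standard Gevrey commutator trick (as in Foias–Temam and the treatment in \cite{FV11b}, \cite{MV11}): the exponential weight on the product is dominated by the product of the weights, so the term is controlled by a purely convolutive expression in $e^{\tau\Lambda^{1/s}}u^\nu$ and $e^{\tau\Lambda^{1/s}}\theta^\nu$, with no loss except the one we now have to extract the factor $\tau$ and the half-derivative gain.

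The next step is to produce the factor $\tau$ together with the factor $\|\Lambda^{1/(2s)}e^{\tau\Lambda^{1/s}}\theta^\nu\|_{L^2}^2$ on the right-hand side. Here I would use the refinement $e^{\tau|k|^{1/s}}-e^{\tau|j|^{1/s}}e^{\tau|k-j|^{1/s}}\le 0$ is too crude; instead one writes, after the cancellation coming from $\nabla\cdot u^\nu=0$ (which kills the "diagonal" part where all the derivative falls on $\theta^\nu$ with frequency $k$), a bound of the form
\begin{align*}
\Big|\,e^{\tau|k|^{1/s}}-e^{\tau|k-j|^{1/s}}\,\Big|\le C\tau\,|j|^{1/s}\,e^{\tau|j|^{1/s}}e^{\tau|k-j|^{1/s}},
\end{align*}
using the mean value theorem and $\big||k|^{1/s}-|k-j|^{1/s}\big|\le |j|^{1/s}$. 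Distributing the resulting $|j|^{1/s}=|j|^{1/(2s)}\cdot|j|^{1/(2s)}$ and, via the triangle inequality on frequencies, $|j|^{1/(2s)}\lesssim |k|^{1/(2s)}+|k-j|^{1/(2s)}$, one arranges one half-derivative onto the $k$-mode and one onto the $(k-j)$-mode of $\theta^\nu$, leaving $e^{\tau\Lambda^{1/s}}u^\nu$ with no derivative. Applying Cauchy–Schwarz and Young's convolution inequality then yields the bound $C\tau\,\|e^{\tau\Lambda^{1/s}}u^\nu\|_{L^\infty_x}\,\|\Lambda^{1/(2s)}e^{\tau\Lambda^{1/s}}\theta^\nu\|_{L^2}^2$, or more conveniently (to stay purely in $\ell^2$-based spaces) $C\tau\,\|e^{\tau\Lambda^{1/s}}u^\nu\|_{H^{\sigma}}\,\|\Lambda^{1/(2s)}e^{\tau\Lambda^{1/s}}\theta^\nu\|_{L^2}^2$ for any $\sigma>d/2$.

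Finally I would convert the $u^\nu$ factor back into a $\theta^\nu$ factor using the smoothing property of $\partial_x T^\nu$ for $\nu>0$. By assumption A3 the symbol of $\partial_{x_i}T^\nu_{ij}$ decays like $C_\nu|k|^{-2}$, so $e^{\tau\Lambda^{1/s}}u^\nu=\partial_x T^\nu[e^{\tau\Lambda^{1/s}}\theta^\nu]$ satisfies $\|e^{\tau\Lambda^{1/s}}u^\nu\|_{H^\sigma}\le C_\nu\|e^{\tau\Lambda^{1/s}}\theta^\nu\|_{H^{\sigma-2}}\le C_\nu\|e^{\tau\Lambda^{1/s}}\theta^\nu\|_{L^2}$ provided $\sigma\le 2$, which is compatible with $\sigma>d/2$ precisely when $d\le 3$ (and in the MG/IPMB applications $d=3$ or $d=2$); in general dimension one simply uses as many orders of smoothing as A3 provides, or invokes A5$_1$ to keep $\|e^{\tau\Lambda^{1/s}}u^\nu\|_{H^\sigma}\lesssim_\nu \|e^{\tau\Lambda^{1/s}}\theta^\nu\|_{L^2}$ directly. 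Substituting gives exactly \eqref{estimate on cross term} with $C=C(\nu)$ absorbing $C_\nu$ and the dimensional constants.

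The main obstacle is the second step: getting the clean factor of $\tau$ and distributing the single gained derivative $|j|^{1/s}$ symmetrically as two half-derivatives on $\theta^\nu$ so that both land on $\Lambda^{1/(2s)}e^{\tau\Lambda^{1/s}}\theta^\nu$ while $u^\nu$ carries none — this is where the incompressibility cancellation and the precise form of the mean-value bound on the exponential weights must be combined carefully. The rest is routine Fourier bookkeeping and an application of A3 (or A5$_1$).
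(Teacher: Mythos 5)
Your overall strategy (Fourier-side commutator using $\nabla\cdot u^\nu=0$, a mean-value bound on the difference of Gevrey weights to extract the factor $\tau$, and finally assumption A3 to trade $u^\nu$ for $\theta^\nu$) is the same as the paper's, but the derivative count in your second step is wrong, and the claimed intermediate bound $C\tau\|e^{\tau\Lambda^{1/s}}u^\nu\|_{H^\sigma}\|\Lambda^{1/(2s)}e^{\tau\Lambda^{1/s}}\theta^\nu\|_{L^2}^2$ does not follow from your steps. After the cancellation, the surviving sum still carries the full gradient factor $|k-j|$ on $\widehat{\theta^\nu}(k-j)$: the cancellation only replaces the weight by the difference $e^{\tau|k|^{1/s}}-e^{\tau|k-j|^{1/s}}$, it does not remove $\nabla\theta^\nu$. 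Your bound $\bigl|e^{\tau|k|^{1/s}}-e^{\tau|k-j|^{1/s}}\bigr|\le C\tau|j|^{1/s}e^{\tau|j|^{1/s}}e^{\tau|k-j|^{1/s}}$ is true but too crude, because it discards the denominator that is the whole point of the estimate; and splitting the gained $|j|^{1/s}$ into two half-derivatives pushed onto the $\theta$-modes moves derivatives in the wrong direction (onto the factors you cannot smooth) while the dangling full derivative on $\theta^\nu$ is never accounted for. What your steps actually yield is of the form $C\tau\|e^{\tau\Lambda^{1/s}}u^\nu\|_{H^\sigma}\|\Lambda^{1+1/(2s)}e^{\tau\Lambda^{1/s}}\theta^\nu\|_{L^2}\|\Lambda^{1/(2s)}e^{\tau\Lambda^{1/s}}\theta^\nu\|_{L^2}$, a full derivative worse than \eqref{estimate on cross term}, and such a bound cannot be absorbed by the $-\dot{\tau}\|\Lambda^{1/(2s)}e^{\tau\Lambda^{1/s}}\theta^\nu\|_{L^2}^2$ term in the energy identity.

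The repair, which is what the paper does, is to keep the mean-value bound in the sharper form
\begin{equation*}
\bigl|e^{\tau|l|^{1/s}}-e^{\tau|k|^{1/s}}\bigr|\le C\tau\,\frac{|j|}{|k|^{1-1/s}+|l|^{1-1/s}}\,e^{\tau|k|^{1/s}}e^{\tau|l|^{1/s}},\qquad j+k=l.
\end{equation*}
The denominator is exactly what tames the gradient: $|k|\cdot\bigl(|k|^{1-1/s}+|l|^{1-1/s}\bigr)^{-1}\le|k|^{1/s}\le|k|^{1/(2s)}\bigl(|k|^{1/(2s)}+|l|^{1/(2s)}\bigr)$, and this is how the two Gevrey half-derivatives $\Lambda^{1/(2s)}$ end up on the two $\theta^\nu$-factors. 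The full derivative $|j|$ in the numerator is deliberately placed on $u^\nu$, not on $\theta^\nu$, because that is the factor with two orders of smoothing: after summing in $j$ one needs $\|\Lambda^{2+1/(2s)}e^{\tau\Lambda^{1/s}}u^\nu\|_{L^2}$ and $\|\Lambda^{2}e^{\tau\Lambda^{1/s}}u^\nu\|_{L^2}$, which \eqref{2 order smoothing} (i.e.\ A3) converts into $C_\nu\|\Lambda^{1/(2s)}e^{\tau\Lambda^{1/s}}\theta^\nu\|_{L^2}$ and $C_\nu\|e^{\tau\Lambda^{1/s}}\theta^\nu\|_{L^2}$ respectively. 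So the bookkeeping must run opposite to the one you propose: the fractional derivatives produced by the tamed gradient go on $\theta^\nu$, while the full derivative from the frequency increment goes on $u^\nu$.
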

\begin{proof}[\bf Proof]
The proof is reminiscent of the one given in \cite{MV11}. Since $\nabla\cdot u^\nu=0$ we have
\begin{align*}
\langle u^\nu\cdot\nabla e^{\tau\Lambda^{\frac{1}{s}}}\theta^\nu,e^{\tau\Lambda^{\frac{1}{s}}}\theta^\nu\rangle=0,
\end{align*} 
which gives
\begin{align*}
\Big|\langle u^\nu\cdot\nabla\theta^\nu,e^{2\tau\Lambda^{\frac{1}{s}}}\theta^\nu\rangle\Big|&=\Big|\langle u^\nu\cdot\nabla\theta^\nu,e^{2\tau\Lambda^{\frac{1}{s}}}\theta^\nu\rangle-\langle u^\nu\cdot\nabla e^{\tau\Lambda^{\frac{1}{s}}}\theta^\nu,e^{\tau\Lambda^{\frac{1}{s}}}\theta^\nu\rangle\Big|\\
&=\Big|i(2\pi)^d\sum_{j+k=l}(\hat{u}_j\cdot k)(\hat{\theta}_k\cdot\bar{\hat{\theta}}_l)e^{\tau|l|^\frac{1}{s}}(e^{\tau|l|^\frac{1}{s}}-e^{\tau|k|^\frac{1}{s}})\Big|.
\end{align*}
We make use of the inequality $e^x-1\le xe^x$ for $x\ge0$ and the triangle inequality $|k+j|^\frac{1}{s}\le|k|^\frac{1}{s}+|j|^\frac{1}{s}$ to obtain
\begin{align*}
\Big|e^{\tau|l|^\frac{1}{s}}-e^{\tau|k|^\frac{1}{s}}\Big|\le C\tau\frac{|j|}{|k|^{1-\frac{1}{s}}+|l|^{1-\frac{1}{s}}}e^{\tau|l|^\frac{1}{s}}e^{\tau|k|^\frac{1}{s}}.
\end{align*}
Hence we have
\begin{align*}
\Big|\langle u^\nu\cdot\nabla\theta^\nu,e^{2\tau\Lambda^{\frac{1}{s}}}\theta^\nu\rangle\Big|&\le C\tau\sum_{j+k=l}|k||\hat{u}_j|e^{\tau|j|^\frac{1}{s}}|\hat{\theta}_k|e^{\tau|k|^\frac{1}{s}}|\hat{\theta}_l|e^{\tau|l|^\frac{1}{s}}\frac{|j|}{|k|^{1-\frac{1}{s}}+|l|^{1-\frac{1}{s}}}\\
&\le C\tau\sum_{j+k=l}|j||\hat{u}_j|e^{\tau|j|^\frac{1}{s}}|\hat{\theta}_k|e^{\tau|k|^\frac{1}{s}}|\hat{\theta}_l|e^{\tau|l|^\frac{1}{s}}|k|^\frac{1}{2s}(|k|^\frac{1}{2s}+|l|^\frac{1}{2s})\\
&\le C\tau\|e^{\tau\Lambda^\frac{1}{s}}\theta^\nu\|_{L^2}\|\Lambda^\frac{1}{2s}e^{\tau\Lambda^\frac{1}{s}}\theta^\nu\|_{L^2}\sum_{j\neq0}|j|^{1+\frac{1}{2s}}|\widehat{u}(j)|e^{\tau|j|^\frac{1}{s}}\notag\\
&\qquad+C\tau\|\Lambda^\frac{1}{2s}e^{\tau\Lambda^\frac{1}{s}}\theta^\nu\|^2_{L^2}\sum_{j\neq0}|j||\widehat{u^\nu}(j)|e^{\tau|j|^\frac{1}{s}}\notag\\
&\le C\tau\|e^{\tau\Lambda^\frac{1}{s}}\theta^\nu\|_{L^2}\|\Lambda^\frac{1}{2s}e^{\tau\Lambda^\frac{1}{s}}\theta^\nu\|_{L^2}\|\Lambda^{2+\frac{1}{2s}}e^{\tau\Lambda^\frac{1}{s}}u^\nu\|_{L^2}\notag\\
&\qquad+C\tau\|\Lambda^\frac{1}{2s}e^{\tau\Lambda^\frac{1}{s}}\theta^\nu\|^2_{L^2}\|\Lambda^2e^{\tau\Lambda^\frac{1}{s}}u^\nu\|_{L^2},
\end{align*}
where we have used the fact that $\sum_{j\neq0,j\in\mathbb{Z}}|j|^{-2}<\infty$. Using the property \eqref{2 order smoothing}, we have
\begin{equation*}
\mbox{$\|\Lambda^{2+\frac{1}{2s}}e^{\tau\Lambda^\frac{1}{s}}u^\nu\|_{L^2}\le C_{\nu}\|\Lambda^\frac{1}{2s}e^{\tau\Lambda^\frac{1}{s}}\theta^\nu\|_{L^2}$ and $\|\Lambda^2 e^{\tau\Lambda^\frac{1}{s}}u^\nu\|_{L^2}\le C_{\nu}\|e^{\tau\Lambda^\frac{1}{s}}\theta^\nu\|_{L^2}$.}
\end{equation*}
Hence there is $C=C(\nu)>0$ such that
\begin{equation*}
\left|-\langle e^{\tau\Lambda^{\frac{1}{s}}}(u^\nu\cdot\nabla\theta^\nu),e^{\tau\Lambda^{\frac{1}{s}}}\theta^\nu\rangle\right|\le C\tau\|e^{\tau\Lambda^\frac{1}{s}}\theta^\nu\|_{L^2}\|\Lambda^\frac{1}{2s}e^{\tau\Lambda^\frac{1}{s}}\theta^\nu\|^2_{L^2},
\end{equation*}
which finishes the proof of \eqref{estimate on cross term}.
\end{proof}
To complete the proof of Theorem~\ref{Gevrey-class global wellposedness}, we apply \eqref{estimate on cross term} on \eqref{a priori estimates on theta} to obtain
\begin{align*}
&\frac{1}{2}\frac{d}{dt}\|e^{\tau\Lambda^{\frac{1}{s}}}\theta^\nu\|^2_{L^2}-\dot{\tau}\|\Lambda^{\frac{1}{2s}}e^{\tau\Lambda^{\frac{1}{s}}}\theta^\nu\|^2_{L^2}\notag\\
&\le C\tau\|e^{\tau\Lambda^\frac{1}{s}}\theta^\nu\|_{L^2}\|\Lambda^\frac{1}{2s}e^{\tau\Lambda^\frac{1}{s}}\theta^\nu\|^2_{L^2}+\|e^{\tau\Lambda^\frac{1}{s}}S\|_{L^2}\|e^{\tau\Lambda^\frac{1}{s}}\theta^\nu\|_{L^2}.
\end{align*}
Choose $\tau>0$ such that 
\begin{equation*}
\dot{\tau}+C\tau\|e^{\tau\Lambda^\frac{1}{s}}\theta^\nu\|_{L^2}=0,
\end{equation*}
then we have
\begin{equation*}
\frac{1}{2}\frac{d}{dt}\|e^{\tau\Lambda^{\frac{1}{s}}}\theta^\nu\|^2_{L^2}\le\|e^{\tau\Lambda^\frac{1}{s}}S\|_{L^2}\|e^{\tau\Lambda^{\frac{1}{s}}}\theta^\nu\|_{L^2},
\end{equation*}
which gives 
\begin{equation*}
\|e^{\tau(t)\Lambda^\frac{1}{s}}\theta^\nu(t)\|_{L^2}\le\|e^{\tau_0\Lambda^\frac{1}{s}}\theta_0\|_{L^2}+2\|e^{\tau_0\Lambda^\frac{1}{s}}S\|_{L^2}.
\end{equation*}
Hence $\tau(t)$ satisfies
\begin{equation*}
\tau(t)\ge\tau_0e^{-C\left(\|e^{\tau_0\Lambda^\frac{1}{s}}\theta_0\|_{L^2}+2\|e^{\tau_0\Lambda^\frac{1}{s}}S\|_{L^2}\right)t}
\end{equation*}
and the proof of Theorem~\ref{Gevrey-class global wellposedness} is complete.
\begin{remark}
We notice that for the ``diffusive'' case, i.e. for the following system when $\kappa>0$:
\begin{align}
\label{abstract active scalar eqn diffusive} \left\{ \begin{array}{l}
\partial_t\theta^{\nu}+u^\nu\cdot\nabla\theta^{\nu}=\kappa\Delta\theta^{\nu}, \\
u_j^{\nu}=\partial_{x_i} T_{ij}^{\nu}[\theta^{\nu}],\theta^{\nu}(x,0)=\theta_0(x),
\end{array}\right.
\end{align}
one can obtain global-in-time existence of solution Gevrey class $s\ge1$ with lower bound
on $\tau(t)$ that does not vanish as $t\rightarrow\infty$. To see it, we apply the previous analysis on \eqref{abstract active scalar eqn diffusive} to obtain
\begin{align}
\frac{1}{2}\frac{d}{dt}\|e^{\tau\Lambda^{\frac{1}{s}}}\theta^\nu\|^2_{L^2}-\dot{\tau}\|\Lambda^{\frac{1}{2s}}e^{\tau\Lambda^{\frac{1}{s}}}\theta^\nu\|^2_{L^2}+\kappa\|e^{\tau\Lambda^\frac{1}{s}}\theta^{\nu}\|^2_{L^2}\le C\tau\|e^{\tau\Lambda^\frac{1}{s}}\theta^\nu\|_{L^2}\|\Lambda^\frac{1}{2s}e^{\tau\Lambda^\frac{1}{s}}\theta^\nu\|^2_{L^2}.
\end{align}
Choosing $\tau>0$ such that
\begin{equation*}
\dot{\tau}+C\tau\|e^{\tau\Lambda^\frac{1}{s}}\theta^\nu\|_{L^2}=0,
\end{equation*}
then we have
\begin{equation*}
\frac{1}{2}\frac{d}{dt}\|e^{\tau\Lambda^{\frac{1}{s}}}\theta^\nu\|^2_{L^2}+\kappa\|e^{\tau\Lambda^\frac{1}{s}}\theta^{\nu}\|^2_{L^2}\le0.
\end{equation*}
Hence we obtain
\begin{equation*}
\|e^{\tau(t)\Lambda^\frac{1}{s}}\theta^\nu(t)\|_{L^2}\le\|e^{\tau_0\Lambda^\frac{1}{s}}\theta_0\|_{L^2}e^{\frac{-\kappa t}{2}},
\end{equation*}
and 
\begin{equation*}
\tau(t)\ge\tau(0)e^{-C\|e^{\tau_0\Lambda^\frac{1}{s}\theta_0}\|_{L^2}\int_0^te^{\frac{-\kappa s}{2}ds}}\ge e^{-\frac{2C}{\kappa}\|e^{\tau_0\Lambda^\frac{1}{s}\theta_0}\|_{L^2}}.
\end{equation*}
Observe that the lower bound $e^{-\frac{2C}{\kappa}\|e^{\tau_0\Lambda^\frac{1}{s}\theta_0}\|_{L^2}}$ tends to zero as $\kappa\rightarrow0$.
\end{remark}

\subsection{The case where $\nu=0$}\label{Case for nu=0}

In this subsection we study the non-diffusive equations \eqref{abstract active scalar eqn} for $\nu=0$. Recall that we consider the following active scalar equation
\begin{align}
\label{abstract active scalar eqn nu=0} \left\{ \begin{array}{l}
\partial_t\theta^0+u^0\cdot\nabla\theta^0=S, \\
u^0_j=\partial_{x_i} T^0_{ij}[\theta^0],\theta^0(x,0)=\theta_0(x)
\end{array}\right.
\end{align}
where $T_{ij}^{0}$ is an operator which satisfies assumptions A1--A2 and either A$5_1$ or A$5_2$. Based on the assumptions A$5_1$ and A$5_2$, we consider the following two cases separately:
\subsubsection{When A$\bold 5_1$ is in force.}
Different from the case for $\nu>0$, as it was proved in \cite{FV11b}, the equation \eqref{abstract active scalar eqn nu=0} is {\it ill-posed} in the sense of Hadamard, which means that the solution map associated to the Cauchy problem for \eqref{abstract active scalar eqn nu=0} is not Lipschitz continuous with respect to perturbations in the initial datum around a specific steady profile $\theta_0$, in the topology of a certain Sobolev space $X$. Nevertheless, as pointed out in \cite{FV11b}, it is possible to obtain the local existence and uniqueness of solutions to \eqref{abstract active scalar eqn nu=0} in spaces of real-analytic functions, owing to the fact that the derivative loss in the nonlinearity $u^0\cdot\nabla\theta^0$ is of order at most one (both in $u^0$ and in $\nabla\theta^0$).

In the present work, we extend the results of \cite{FV11b} to the case of Gevrey-class solutions. We first state and prove the following proposition which gives the Gevrey-class local wellposedness for \eqref{abstract active scalar eqn nu=0}.
\begin{proposition}\label{Gevrey-class local existence nu=0}
Fix $s\ge1$ and $K_0>0$. Let $\theta_0$ and $S$ be of Gevrey-class $s$ with radius of convergence $\tau_0>0$ and
\begin{align}\label{bounds on gevrey norm of S and theta0}
\|\Lambda^re^{\tau_0\Lambda^\frac{1}{s}}\theta^0(\cdot,0)\|_{L^2}\le K_0,\qquad\|\Lambda^re^{\tau_0\Lambda^\frac{1}{s}}S\|_{L^2}\le K_0,
\end{align}
where $r>\frac{d}{2}+\frac{3}{2}$. There exists $T_*=T_*(\tau_0, K_0) > 0$ and a unique Gevrey-class $s$ solution on $[0, T_*)$ to the initial value problem associated to \eqref{abstract active scalar eqn nu=0}.
\end{proposition}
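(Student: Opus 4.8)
The plan is to obtain the required a priori estimate by an energy method in the Gevrey class with a \emph{time-dependent} radius of convergence $\tau=\tau(t)$, in the spirit of the analytic local existence theory (cf. \cite{MV11} and, for the inviscid MG equation, \cite{FV11b}), and then to make the construction rigorous by a routine approximation/compactness argument, treating uniqueness separately. Throughout I would write $\|f\|_{\tau}:=\|\Lambda^{r}e^{\tau\Lambda^{1/s}}f\|_{L^2}$, so that $\|\Lambda^{1/(2s)}f\|_{\tau}:=\|\Lambda^{r+1/(2s)}e^{\tau\Lambda^{1/s}}f\|_{L^2}$ is the extra fractional power of derivative to be ``gained'' from the shrinking of $\tau$. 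Working first with smooth approximations (mollified data, or a Galerkin truncation) so that all the manipulations are legitimate, I would take the $L^2$ inner product of the first equation in \eqref{abstract active scalar eqn nu=0} with $\Lambda^{2r}e^{2\tau\Lambda^{1/s}}\theta^{0}$; using $\frac{d}{dt}e^{2\tau\Lambda^{1/s}}=2\dot\tau\,\Lambda^{1/s}e^{2\tau\Lambda^{1/s}}$ and imposing $\dot\tau\le 0$ this produces
\begin{align*}
\tfrac12\tfrac{d}{dt}\|\theta^{0}\|_{\tau}^{2}+|\dot\tau|\,\|\Lambda^{1/(2s)}\theta^{0}\|_{\tau}^{2}
=-\big\langle u^{0}\cdot\nabla\theta^{0},\,\Lambda^{2r}e^{2\tau\Lambda^{1/s}}\theta^{0}\big\rangle+\big\langle S,\,\Lambda^{2r}e^{2\tau\Lambda^{1/s}}\theta^{0}\big\rangle ,
\end{align*}
where the forcing term is bounded by $\|S\|_{\tau}\|\theta^{0}\|_{\tau}\le K_{0}\|\theta^{0}\|_{\tau}$ since $\tau(t)\le\tau_{0}$ and \eqref{bounds on gevrey norm of S and theta0} holds.

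The heart of the matter — and the only genuine obstacle — is the estimate of the transport term, which is delicate precisely because under A$5_1$ the map $\theta^{0}\mapsto u^{0}$ is only of \emph{order one}, i.e. $|\widehat{u^{0}}(k)|\le C_{0}|k|\,|\widehat{\theta^{0}}(k)|$, so that $u^{0}\cdot\nabla\theta^{0}$ loses derivatives and no Sobolev estimate can close; the Gevrey weight must supply the missing regularity. I would pass to Fourier series, write the trilinear term as a sum over $j+k+l=0$, and subtract the cancellation $\langle u^{0}\cdot\nabla g,g\rangle=0$ — valid because $\nabla\cdot u^{0}=0$ by A1 — applied with $g=\Lambda^{r}e^{\tau\Lambda^{1/s}}\theta^{0}$, which replaces the symbol $|l|^{2r}e^{2\tau|l|^{1/s}}$ by the commutator factor $|l|^{r}e^{\tau|l|^{1/s}}\big(|l|^{r}e^{\tau|l|^{1/s}}-|k|^{r}e^{\tau|k|^{1/s}}\big)$. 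The symbol difference I would then control by the splitting
\begin{align*}
|l|^{r}e^{\tau|l|^{1/s}}-|k|^{r}e^{\tau|k|^{1/s}}=\big(|l|^{r}-|k|^{r}\big)e^{\tau|k|^{1/s}}+|l|^{r}\big(e^{\tau|l|^{1/s}}-e^{\tau|k|^{1/s}}\big)
\end{align*}
together with the elementary inequalities $\big||l|^{r}-|k|^{r}\big|\le C|j|(|k|^{r-1}+|l|^{r-1})$, $e^{x}-1\le xe^{x}$ for $x\ge 0$, and (for $s\ge1$) the subadditivity relations $|l|^{1/s}\le|j|^{1/s}+|k|^{1/s}$, $\big||l|^{1/s}-|k|^{1/s}\big|\le|j|^{1/s}$; after invoking A$5_1$ to trade $\widehat{u^{0}}(j)$ for $|j|\,\widehat{\theta^{0}}(j)$, redistributing the exponential weights among the three frequencies, and estimating the resulting trilinear sums by Young's convolution inequality — the low-frequency factor being $\ell^{1}$-summable exactly because $r>\tfrac d2+\tfrac32$ — the derivative-losing contributions should all come out proportional to $\|\Lambda^{1/(2s)}\theta^{0}\|_{\tau}^{2}$, with a coefficient controlled by a polynomial in $\|\theta^{0}\|_{\tau}$, and hence be absorbable into the gain term $|\dot\tau|\,\|\Lambda^{1/(2s)}\theta^{0}\|_{\tau}^{2}$ once $|\dot\tau|$ is taken large enough. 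The real work lies in organising this bookkeeping so that nothing worse than the single power $\Lambda^{1/(2s)}$ survives at top order; this is the Gevrey analogue of the analytic estimate of \cite{FV11b} and is what forces both the lower bound $r>\tfrac d2+\tfrac32$ and the finiteness of the lifespan (through the rate at which $\tau$ is compelled to decrease).

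To close the argument I would then choose, say, $\dot\tau=-C\big(1+\|\theta^{0}\|_{\tau}\big)\|\theta^{0}\|_{\tau}$ with $\tau(0)=\tau_{0}$, so that the gain term absorbs the transport contributions; what remains is $\tfrac{d}{dt}\|\theta^{0}\|_{\tau}\le K_{0}$, whence $\|\theta^{0}(\cdot,t)\|_{\tau}\le K_{0}(1+t)$ for as long as the scheme is defined, using $\|\theta^{0}(\cdot,0)\|_{\tau_{0}}\le K_{0}$. Feeding this back into the equation for $\tau$ shows that $\tau(t)$ stays strictly positive up to some $T_{*}=T_{*}(\tau_{0},K_{0})>0$, and hence $\tau(t)\ge\bar\tau>0$ on every compact subinterval of $[0,T_{*})$; since all these bounds are uniform in the approximation parameter, passing to the limit (Aubin--Lions compactness) yields a Gevrey-class $s$ solution on $[0,T_{*})$ together with the stated a priori Gevrey bound, the constant depending on $K_{0}$. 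Uniqueness I would obtain from the same energy estimate applied to the difference $w=\theta^{0,1}-\theta^{0,2}$ of two such solutions, which solves $\partial_{t}w+u^{0,1}\cdot\nabla w=-(u^{0,1}-u^{0,2})\cdot\nabla\theta^{0,2}$: carrying this out in the norm $\|\cdot\|_{\tilde\tau}$ for a radius $\tilde\tau$ \emph{strictly} smaller than the common radius of convergence of the two solutions leaves just enough room to absorb the single derivative lost through the last term, and Gr\"onwall's inequality together with $w(\cdot,0)=0$ then gives $w\equiv0$. The delicate nonlinear estimate of the second paragraph is where essentially all of the difficulty resides.
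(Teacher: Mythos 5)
Your proposal is correct in substance and follows the same overall strategy as the paper: a Gevrey energy estimate for $\|\Lambda^re^{\tau\Lambda^{1/s}}\theta^0\|_{L^2}$ with a shrinking radius $\tau(t)$, absorption of the order-one derivative loss into the term $-\dot\tau\|\Lambda^{1/(2s)}\theta^0\|_{\tau,r}^2$, and a lifespan $T_*(\tau_0,K_0)$ determined by the time at which $\tau$ reaches zero. The genuine difference is in how the trilinear term is handled. You first invoke the cancellation $\langle u^0\cdot\nabla g,g\rangle=0$ with $g=\Lambda^re^{\tau\Lambda^{1/s}}\theta^0$ and then split the symbol difference $|l|^re^{\tau|l|^{1/s}}-|k|^re^{\tau|k|^{1/s}}$; the paper's proof of this proposition skips the commutator entirely and estimates $\mathcal{R}$ directly, using $|\widehat{u^0}(j)|\le C|j||\widehat{\theta^0}(j)|$ (from A$5_1$), $|l|^r\le C(|j|^r+|k|^r)$, distributing a half-derivative onto each of the two high-frequency factors, and summing the low-frequency factor $\sum_j|j|^{3/2}|\widehat{\theta^0}(j)|e^{\tau|j|^{1/s}}$ by Cauchy--Schwarz, which is exactly where $r>\tfrac d2+\tfrac32$ enters. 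Since the symbol is merely of order one and the shrinking radius already supplies the needed regularity, the commutator buys nothing essential here; it only lengthens the bookkeeping. The paper also makes a simpler closing choice than yours: it freezes $\dot\tau=-4CK_0$ (rather than a solution-dependent $\dot\tau$), runs a bootstrap to keep $\|\theta^0\|_{\tau,r}\le3K_0$, and reads off $T_*=\tau_0/(4CK_0)$ explicitly. One caution on your second paragraph: the elementary inequalities you list ($e^x-1\le xe^x$ together with $\bigl||l|^{1/s}-|k|^{1/s}\bigr|\le|j|^{1/s}$) produce a top-order loss of $\Lambda^{1/2}$ on the high frequencies, not $\Lambda^{1/(2s)}$; for $s>1$ the absorption into $-\dot\tau\|\Lambda^{1/(2s)}\theta^0\|_{\tau,r}^2$ is then not literal, and one should instead use the sharper bound
\begin{align*}
\Bigl|e^{\tau|l|^{1/s}}-e^{\tau|k|^{1/s}}\Bigr|\le C\tau\,\frac{|j|}{|k|^{1-\frac1s}+|l|^{1-\frac1s}}\,e^{\tau|l|^{1/s}}e^{\tau|k|^{1/s}},
\end{align*}
which is precisely the inequality used in the paper's Lemma~\ref{estimate on stretching term} for the $\nu>0$ case, to make the claimed proportionality to $\|\Lambda^{1/(2s)}\theta^0\|_{\tau,r}^2$ exact. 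With that adjustment your argument, including the approximation/compactness construction and the uniqueness estimate at a strictly smaller radius, goes through.
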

\begin{proof}[\bf Proof]
The idea of the proof follows by a similar argument given in \cite{FV11b}. For $r>\frac{d}{2}+\frac{3}{2}$, we define
\begin{equation*}
\|\theta^0\|_{\tau,r}^2=\|\Lambda^re^{\tau\Lambda^\frac{1}{s}}\theta^0\|_{L^2}^2=\sum_{k\in\mathbb{Z}^d_*}|k|^{2r}e^{2\tau|k|^\frac{1}{s}}|\widehat{\theta^0}(k)|^2
\end{equation*}
We take $L^2$-inner product of \eqref{abstract active scalar eqn nu=0} with $\Lambda^{2r}e^{2\tau\Lambda^{\frac{1}{s}}}\theta^0$ and obtain
\begin{align}\label{a priori estimates on theta singular gevrey}
\frac{1}{2}\frac{d}{dt}\|\theta^0\|^2_{\tau,r}-\dot{\tau}\|\Lambda^{\frac{1}{2s}}\theta^0\|^2_{\tau,r}=\langle u^0\cdot\nabla\theta^0,\Lambda^{2r}e^{2\tau\Lambda^{\frac{1}{s}}}\theta^0\rangle-\langle \Lambda^re^{\tau\Lambda^{\frac{1}{s}}}S,\Lambda^re^{\tau\Lambda^{\frac{1}{s}}}\theta^0\rangle.
\end{align}
Write $\mathcal{R}=\langle u\cdot\nabla\theta^0,\Lambda^{2r}e^{2\tau\Lambda^{\frac{1}{s}}}\theta^0\rangle$, then it can rewritten as
\begin{align*}
\mathcal{R}=i(2\pi)^d\sum_{j+k=l,j,k,l\in\mathbb{Z}^d_*}\widehat{u^0}(j)\cdot k\widehat{\theta^0}(k)|l|^{2r}e^{2\tau|l|^\frac{1}{s}}\widehat{\theta^0}(-l).
\end{align*}
Using the assumption A2 that $|\widehat{u^0}(j)|\le C|j||\widehat{\theta^0}(j)|$ and the fact $|l|^\frac{1}{s}=|j+k|^\frac{1}{s}\le |j|^\frac{1}{s}+|k|^\frac{1}{s}$ for $s\ge1$ and $|j|,|k|\ge1$, we have
\begin{align*}
\mathcal{R}&\le C\sum|j||k|(|j|^r+|k|^r)|\widehat{\theta^0}(j)|e^{\tau|j|^\frac{1}{s}}|\widehat{\theta^0}(k)|e^{\tau|k|^\frac{1}{s}}|l|^r|\widehat{\theta^0}(l)|e^{\tau|l|^\frac{1}{s}}\\
&\le C\sum(|j|^{r+\frac{1}{2}}|k|^\frac{3}{2}+|k|^{r+\frac{1}{2}}|j|^\frac{3}{2})|\widehat{\theta^0}(j)|e^{\tau|j|^\frac{1}{s}}|\widehat{\theta^0}(k)|e^{\tau|k|^\frac{1}{s}}|l|^{r+\frac{1}{2}}e^{\tau|l|^\frac{1}{s}}\\
&\le C\|\Lambda^\frac{1}{2s}\theta^0\|^2_{\tau,r}\sum|j|^\frac{3}{2}|\widehat{\theta^0}(j)|e^{\tau|j|^\frac{1}{s}}\\
&\le C\|\Lambda^\frac{1}{2s}\theta^0\|^2_{\tau,r}\|\theta^0\|_{\tau,r},
\end{align*}
where the last inequality follows since $r>\frac{d}{2}+\frac{3}{2}$. Hence we obtain from \eqref{a priori estimates on theta singular gevrey} that
\begin{align}\label{a priori bound on theta singular gevrey}
\frac{1}{2}\frac{d}{dt}\|\theta^0(\cdot,t)\|^2_{\tau,r}\le (\dot{\tau}(t)+C\|\theta^0(\cdot,t)\|_{\tau,r})\|\Lambda^\frac{1}{2s}\theta^0\|^2_{\tau,r}+\|S\|_{\tau,r}\|\theta^0(\cdot,t)\|_{\tau,r}.
\end{align}
Let $\tau(t)$ be deceasing and satisfy 
\begin{align*}
\dot{\tau}+4CK_0=0,
\end{align*}
with initial condition $\tau(0)=\tau_0$, then we have $\dot{\tau}(t)+C\|\theta^0(\cdot,t)\|_{\tau,r}<0$, and from \eqref{a priori bound on theta singular gevrey} that 
\begin{align}\label{bound on solution nu=0}
\|\theta^0(\cdot,t)\|_{\tau,r}\le\|\theta^0(\cdot,0)\|_{\tau,r}+2\|S\|_{\tau,r}=3K_0
\end{align}
as long as $\tau(t)>0$. Hence it implies the existence of a Gevrey-class $s$ solution $\theta^0$ on $[0,T_*)$, where the maximal time of existence of the Gevrey-class $s$ solution is given by $T_*=\frac{\tau_0}{4CK_0}$.
\end{proof}
\begin{proof}[\bf Proof of Theorem~\ref{Gevrey-class local wellposedness}]
By choosing $\bar{T}=\frac{T_*}{2}$ and $\bar{\tau}=\tau(\bar{T})$, where $T_*$, $\tau(t)$ are as defined in Proposition~\ref{Gevrey-class local existence nu=0}, if $\theta(\cdot,0)=\theta_0$ and $S$ be of Gevrey-class $s$, both with radius of convergence at least $\tau_0$ and satisfy \eqref{bounds on gevrey norm of S and theta0}, then there exists a unique Gevrey-class $s$ solution $\theta^0$ to \eqref{abstract active scalar eqn} for $\nu=0$ defined on $[0,\bar{T}]$ with radius of convergence at least $\bar{\tau}$. The time $\bar{T}$ and radius on convergence $\bar{\tau}$ should only depend on $C_0$ as described in assumption A3, hence they can be chosen independent of $\nu$ and the proof of Proposition~\ref{Gevrey-class local existence nu=0} also applies to \eqref{abstract active scalar eqn} for $\nu>0$. The bounds \eqref{bound on solution 2}-\eqref{bound on solution 1} follow immediately from \eqref{bound on solution nu=0}.
\end{proof}
\begin{remark}
By uniqueness, for $\nu>0$, the Gevrey-class $s$ solution $\theta^\nu$ as obtained in Theorem~\ref{Gevrey-class global wellposedness} coincides with the one as obtained in Theorem~\ref{Gevrey-class local wellposedness} on $\mathbb{T}^d\times[0,\bar{T}]$. 
\end{remark}
\subsubsection{When A$\bold 5_2$ is in force.}
Contrary to the previous case, when assumption A$5_2$ is in force, the operator $\partial_x T^0$ becomes a zero order operator with $\partial_x T^0:L^2\rightarrow L^2$ being bounded. Following the idea given in \cite{FRV12}, we show that under the assumptions A1--A2 and A$5_2$, the equation \eqref{abstract active scalar eqn nu=0} is locally wellposed in Sobolev space $H^s$ for $s>\frac{d}{2}+1$, thereby proving Theorem~\ref{Local wellposedness in Sobolev space}.

Before we give the proof of Theorem~\ref{Local wellposedness in Sobolev space}, we recall the following proposition from \cite{FRV12}:
\begin{proposition}
Suppose that $s>0$ and $p\in(1,\infty)$. If $f,g\in\mathcal{S}$, then
\begin{equation}\label{Commutator estimate}
\|\Lambda^s(fg)-f\Lambda^s g\|_{L^p}\le C\Big(\|\nabla f\|_{L^{p_1}}\|\Lambda^{s-1} g\|_{L^{p_2}}+\|\Lambda^s f\|_{L^{p_3}}\|g\|_{L^{p+4}}\Big),
\end{equation}
where $\Lambda=(-\Delta)^\frac{1}{2}$, $\frac{1}{p}=\frac{1}{p_1}+\frac{1}{p_2}=\frac{1}{p_3}+\frac{1}{p_4}$, and $p,p_2,p_3\in(1,\infty)$.
\end{proposition}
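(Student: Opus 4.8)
The plan is to recognize the asserted bound as the classical Kato--Ponce (Kenig--Ponce--Vega) fractional Leibniz/commutator estimate and to prove it by a Littlewood--Paley / Bony paraproduct decomposition in the spirit of \cite{BCD11}. Using dyadic blocks $\Delta_j$ and low-frequency cutoffs $S_{j-1}=\sum_{k\le j-2}\Delta_k$, I would write $fg=T_f g+T_g f+R(f,g)$, where $T_f g=\sum_j S_{j-1}f\,\Delta_j g$ is the low--high paraproduct and $R(f,g)=\sum_{|j-j'|\le1}\Delta_j f\,\Delta_{j'}g$ the high--high remainder, and decompose $f\,\Lambda^s g$ in the same way. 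Subtracting, $\Lambda^s(fg)-f\Lambda^s g$ splits into five pieces: the genuine commutator $[\Lambda^s,T_f]g$; the product terms $\Lambda^s T_g f$ and $\Lambda^s R(f,g)$; and the terms $-T_{\Lambda^s g}f$ and $-R(f,\Lambda^s g)$ coming from $f\Lambda^s g$. The organizing principle is that only the first piece requires a cancellation between $f$ and $\Lambda^s g$; the remaining four are ``derivative free'', pair up naturally ($\Lambda^s T_g f$ with $T_{\Lambda^s g}f$, $\Lambda^s R(f,g)$ with $R(f,\Lambda^s g)$), and each is controlled separately by $\|\Lambda^s f\|_{L^{p_3}}\|g\|_{L^{p_4}}$.

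For $\Lambda^s T_g f=\sum_j\Lambda^s(S_{j-1}g\,\Delta_j f)$, every summand is frequency-localized to an annulus $\{|\xi|\sim2^j\}$, so $\Lambda^s$ acts like the scalar $2^{js}$; the Littlewood--Paley square-function estimate (legitimate since $p,p_3\in(1,\infty)$), Hölder with exponents $p_3,p_4$, and the pointwise bound $|S_{j-1}g|\le C\,Mg$ (with the Hardy--Littlewood maximal operator bounded on $L^{p_4}$, $p_4>1$) give $\|\Lambda^s T_g f\|_{L^p}\le C\|g\|_{L^{p_4}}\|\Lambda^s f\|_{L^{p_3}}$. The term $T_{\Lambda^s g}f=\sum_j S_{j-1}(\Lambda^s g)\,\Delta_j f$ is handled identically once one notes $|S_{j-1}(\Lambda^s g)|\le C\,2^{js}Mg$, the hypothesis $s>0$ being exactly what makes the geometric series $\sum_{k\le j-2}2^{ks}$ sum to $2^{js}$. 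For the two remainder pieces one uses that both input frequencies are comparable, so the operator $\Lambda^s$ may be moved onto $f$; by the standard remainder bound (convergent at low output frequencies because $s>0$, after the usual bookkeeping that the $j$-th output only meets blocks $\Delta_{j'}$ with $j\ge j'-c$) one reduces to $\sum_j 2^{js}\|\Delta_j f\|_{L^{p_3}}\|\widetilde\Delta_j g\|_{L^{p_4}}$, which is $\le C\|\Lambda^s f\|_{L^{p_3}}\|g\|_{L^{p_4}}$ by Cauchy--Schwarz in $j$ and Hölder in $x$.

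The heart of the matter, and the step I expect to be the only genuine obstacle, is the commutator $[\Lambda^s,T_f]g=\sum_j\big(\Lambda^s(S_{j-1}f\,\Delta_j g)-S_{j-1}f\,\Lambda^s\Delta_j g\big)$, where a \emph{full} derivative must be extracted from $f$: keeping $f$ and $\Lambda^s g$ apart would only produce $\|f\|_{L^\infty}\|\Lambda^s g\|$, which is not on the right-hand side. On the $j$-th block the inputs carry frequencies $|\xi|\le2^{j-1}$ and $|\eta|\sim2^j$, and the associated bilinear symbol is $|\xi+\eta|^s-|\eta|^s$; by the mean value theorem this equals $i\xi\cdot\big(\int_0^1 s|\eta+t\xi|^{s-2}(\eta+t\xi)\,dt\big)$, hence factors as $2^{j(s-1)}$ times $(i\xi)$ times a vector symbol bounded, together with all its derivatives, at scale $2^j$ — i.e.\ a Coifman--Meyer symbol. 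Thus $[\Lambda^s,T_f]g$ is, modulo uniformly bounded bilinear operators, a paraproduct of $\nabla f$ against $\Lambda^{s-1}g$; almost orthogonality of the annular pieces, the Littlewood--Paley square function on the $\Lambda^{s-1}g$ factor, and the maximal-function bound on the low-frequency $\nabla f$ factor (which therefore tolerates $p_1=\infty$) yield $\|[\Lambda^s,T_f]g\|_{L^p}\le C\|\nabla f\|_{L^{p_1}}\|\Lambda^{s-1}g\|_{L^{p_2}}$. Summing the five estimates gives \eqref{Commutator estimate}; the essential point throughout is the symbol expansion $|\xi+\eta|^s-|\eta|^s=O(|\xi|\,|\eta|^{s-1})$ in the low--high regime, everything else being routine manipulation with the Littlewood--Paley square function and the maximal inequality.
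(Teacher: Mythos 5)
Your argument is correct in outline, but there is nothing in the paper to compare it against: the proposition is stated without proof, being recalled verbatim from \cite{FRV12}, where it is in turn the classical Kato--Ponce / Kenig--Ponce--Vega commutator estimate. What you have written is precisely the canonical modern proof of that classical result, so in that sense you have supplied the proof the paper delegates to the literature. The five-term Bony decomposition is the right bookkeeping, and you correctly isolate the one place where cancellation is needed (the low--high commutator $[\Lambda^s,T_f]g$, handled by the mean-value expansion $|\xi+\eta|^s-|\eta|^s=O(|\xi|\,|\eta|^{s-1})$ and a Coifman--Meyer multiplier bound) from the four ``derivative-free'' terms controlled by $\|\Lambda^s f\|_{L^{p_3}}\|g\|_{L^{p_4}}$ via square functions and the maximal inequality. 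Two points deserve to be made explicit if this were written out in full: first, the mean-value/Coifman--Meyer step requires $|\eta+t\xi|\sim 2^j$ uniformly in $t\in[0,1]$, which holds only because the paraproduct convention separates the supports ($|\xi|\le c\,2^j$ with $c$ strictly less than the inner radius of the annulus carrying $\eta$) --- this is where the choice $S_{j-1}=\sum_{k\le j-2}\Delta_k$ matters; second, in the high--high remainder the output frequency lives in a ball rather than an annulus, so the summation over output blocks needs the small exponential gain $2^{(k-j)s}$, $j\ge k-c$, fed through Young's inequality in $\ell^2$ rather than a bare Cauchy--Schwarz --- you gesture at this correctly but it is the only place $s>0$ is genuinely used twice. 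Finally, the estimate is applied in the paper on $\mathbb{T}^d$; the Littlewood--Paley argument transfers verbatim to the periodic setting for mean-zero functions, which is the situation in which the paper invokes \eqref{Commutator estimate}. (The exponent $L^{p+4}$ in the statement is a typo of the source for $L^{p_4}$, as your reading assumes.)
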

\begin{proof}[\bf Proof of Theorem~\ref{Local wellposedness in Sobolev space}]
For simplicity, we denote $\theta^0$ and $u^0$ by $\theta$ and $u$ respectively. We subdivide the proof into 3 steps.
\begin{step}
We consider the sequence of approximations $\{\theta_n\}_{n\ge1}$ given by the solutions of
\begin{align}\label{theta n=1}
\partial_t\theta_1&=S\notag\\
\theta_1(\cdot,0)&=\theta_0.
\end{align}
and
\begin{align}\label{theta n=n}
\partial_t\theta_n+u_{n-1}\cdot\nabla\theta_n&=S\notag\\
\theta_{n-1}&=\partial_xT^0[\theta_{n-1}]\\
\theta_n(\cdot,0)&=\theta_0.\notag
\end{align}
System \eqref{theta n=1} can be solved easily and for all $T>0$, we also have the bound
\begin{align*}
\|\Lambda^s\theta_1\|^2_{L^\infty(0,T;L^2)}\le\|\Lambda^s\theta_0\|^2_{L^2}+T\|S\|^2_{H^s},
\end{align*}
where $\Lambda=(-\Delta)^\frac{1}{2}$. For the system \eqref{theta n=n}, we consider the linear approximated system 
\begin{align*}
\partial_t\theta^\varepsilon+v\cdot\nabla\theta^\varepsilon-\varepsilon\Delta\theta^\varepsilon&=S\notag\\
\theta^\varepsilon(\cdot,0)&=\theta_0,
\end{align*}
and the details follow from Theorem~A1 in \cite{FRV12}. This shows that there exists a unique solution $\theta_n\in L^\infty(0,T;H^s)$ of \eqref{theta n=n}.
\end{step}
\begin{step}
Next we show that $\{\theta_n\}_{n\ge0}$ is bounded. We fix a time $T$ (to be chosen later) such that $$T<\frac{\|\Lambda^s\theta_0\|_{L^2}^2}{\|S\|^2_{H^s}}.$$
Assume that
\begin{align}\label{bound on theta n}
\|\Lambda^s\theta_j\|^2_{L^\infty(0,T;L^2)}\le 2\|\Lambda^s\theta_0\|^2_{L^2},
\end{align}
for $1\le j\le n-1$. By A$5_2$, we have
\begin{align*}
\|\Lambda^s u_{n-1}(\cdot,t)\|^2_{L^2}\le\|\Lambda^s \theta_{n-1}(\cdot,t)\|^2_{L^2}.
\end{align*}
for all $t>0$. Apply $\Lambda^s$ on \eqref{theta n=n} and take inner product with $\Lambda^s\theta_n$, we obain
\begin{align}\label{estimate on theta n identity}
\frac{1}{2}\frac{d}{dt}\intox|\Lambda^s\theta_n|^2+\intox\Lambda^s\theta_n\cdot\Lambda^s(u_{n-1}\cdot\nabla\theta_n)=\intox\Lambda^s\theta_n\cdot\Lambda^s S.
\end{align}
The term $\dis\intox\Lambda^s\theta_n\cdot\Lambda^s(u_{n-1}\cdot\nabla\theta_n)$ can be rewritten as
\begin{align*}
&\intox\Lambda^s\theta_n\cdot\Lambda^s(u_{n-1}\cdot\nabla\theta_n)\\
&=\intox\Lambda^s\theta_n\cdot(\Lambda^s(u_{n-1}\cdot\nabla\theta_n)-u_{n-1}\cdot\Lambda^s(\nabla\theta_n))+\intox\Lambda^s\theta_n\cdot u_{n-1}\Lambda^s(\nabla\theta_n),
\end{align*}
Upon integration by parts, the term $\dis \intox\Lambda^s\theta_n\cdot u_{n-1}\Lambda^s(\nabla\theta_n)$ vanishes since $\nabla\cdot u_{n-1}=0$. Using \eqref{Commutator estimate} for $f=u_{n-1}$, $g=\nabla\theta_n$, $p=2$, $p_1=\infty$, $p_2=2$, $p_3=2$, $p_4=\infty$, and applying the assumption A$5_2$, we have
\begin{align}\label{commutator estimate}
&\left|\intox\Lambda^s\theta_n\cdot(\Lambda^s(u_{n-1}\cdot\nabla\theta_n)-u_{n-1}\cdot\Lambda^s(\nabla\theta_n))\right|\notag\\
&\le\|\Lambda^s\theta_n\|_{L^2}\|\Lambda^s(u_{n-1}\cdot\nabla\theta_n)-u_{n-1}\cdot\Lambda^s(\nabla\theta_n)\|_{L^2}\notag\\
&\le C\|\Lambda^s\theta_n\|_{L^2}(\|\nabla u_{n-1}\|_{L^\infty}\|\Lambda^s\theta_n\|_{L^2}+\|\Lambda^s u_{n-1}\|_{L^2}\|\nabla\theta_n\|_{L^\infty})\notag\\
&\le C\|\Lambda^s\theta_n\|_{L^2}(\|\Lambda^s \theta_{n-1}\|_{L^2}\|\Lambda^s\theta_n\|_{L^2}+\|\Lambda^s \theta_{n-1}\|_{L^2}\|\Lambda^s\theta_n\|_{L^2})\notag\\
&\le 2C\|\Lambda^s\theta_n\|_{L^2}(\|\Lambda^s \theta_{n-1}\|_{L^2}\|\Lambda^s\theta_n\|_{L^2}).
\end{align}
Hence we obtain from \eqref{estimate on theta n identity} and \eqref{commutator estimate} that
\begin{align}\label{estimate on theta n 1}
\frac{1}{2}\frac{d}{dt}\|\Lambda^s \theta_{n}(\cdot,t)\|^2_{L^2}\le 2C\|\Lambda^s\theta_n\|_{L^2}(\|\Lambda^s \theta_{n-1}\|_{L^2}\|\Lambda^s\theta_n\|_{L^2})+\|\Lambda^s\theta_n\|_{L^2}\|\Lambda^sS\|_{L^2}.
\end{align}
Applying the bound \eqref{bound on theta n} on $\theta_n$, we have
\begin{align}\label{estimate on theta n 2}
\frac{d}{dt}\|\Lambda^s \theta_{n}(\cdot,t)\|_{L^2}\le4C\|\Lambda^s \theta_0\|_{L^2}\|\Lambda^s\theta_n\|_{L^2}+2\|\Lambda^sS\|_{L^2},
\end{align}
and hence by integrating \eqref{estimate on theta n 2} over $t$ and choosing $T$ small enough, \eqref{bound on theta n} also holds for $j=n$.
\end{step}
\begin{step}
Finally, we show that $\{\theta_n\}_{n\ge0}$ is a Cauchy sequence. Denote the difference of $\theta_n$ and $\theta_{n-1}$ by 
$$\tilde\theta_{n}=\theta_n-\theta_{n-1}.$$
It follows from \eqref{theta n=n} that $\tilde\theta_{n}$ satisfies
\begin{align}\label{tilde theta n}
\partial_t\tilde\theta_n+u_{n-1}\cdot\nabla\tilde\theta_n+\tilde u_{n-1}\cdot\nabla\theta_{n-1}=0,
\end{align}
where $\tilde u_{n-1}=\partial_xT^0[\tilde\theta_{n-1}]$. Apply $\Lambda^{s-1}$ on \eqref{tilde theta n} and take inner product with $\Lambda^{s-1}\tilde\theta_n$,
\begin{align}\label{estimate on tilde theta n identity}
\frac{1}{2}\frac{d}{dt}\intox|\Lambda^{s-1}\tilde\theta_n|^2+\intox\Lambda^{s-1}\tilde\theta_n\cdot\Lambda^{s-1}(u_{n-1}\cdot\nabla\tilde\theta_n)+\intox\Lambda^{s-1}\tilde\theta_n\cdot\Lambda^{s-1}(\tilde u_{n-1}\cdot\nabla\theta_{n-1})=0.
\end{align}
The term $\dis\intox\Lambda^{s-1}\tilde\theta_n\cdot\Lambda^{s-1}(u_{n-1}\cdot\nabla\tilde\theta_n)$ can be estimated as follows.
\begin{align*}
&\left|\intox\Lambda^{s-1}\tilde\theta_n\cdot\Lambda^{s-1}(u_{n-1}\cdot\nabla\tilde\theta_n)\right|\\
&\le\left|\intox\Lambda^{s-1}\tilde\theta_n\cdot(\Lambda^{s-1}(u_{n-1}\cdot\nabla\tilde\theta_n)-u_{n-1}\cdot\Lambda^{s-1}(\nabla\tilde\theta_n)\right|+\left|\intox \Lambda^{s-1}\tilde\theta_n\cdot u_{n-1}\cdot\Lambda^{s-1}(\nabla\tilde\theta_n)\right|\\
&\le C\|\Lambda^{s-1}\tilde\theta_n\|_{L^2}(\|\nabla u_{n-1}\|_{L^\infty}\|\Lambda^{s-2}\nabla\tilde\theta_n\|_{L^2}+\|\Lambda^{s-1}u_{n-1}\|_{L^6}\|\nabla\tilde\theta_n\|_{L^3})\\
&\le C\|\Lambda^{s-1}\tilde\theta_n\|_{L^2}(\|\Lambda^s\theta_{n-1}\|_{L^2}\|\Lambda^{s-1}\tilde\theta_n\|_{L^2}+\|\Lambda^s\theta_{n-1}\|_{L^2}\|\Lambda^{s-1}\tilde\theta_n\|_{L^2})\\
&\le 2C\|\Lambda^{s-1}\tilde\theta_n\|_{L^2}^2\|\Lambda^s\theta_{n-1}\|_{L^2},
\end{align*}
where we used \eqref{Commutator estimate} for $f=u_{n-1}$, $g=\nabla\tilde\theta_n$, $p=2$, $p_1=\infty$, $p_2=2$, $p_3=6$, $p_4=3$ and the assumption A$5_2$. 

On the other hand, using Proposition~2.1 in \cite{FRV12}, the term $\dis\intox\Lambda^{s-1}\tilde\theta_n\cdot\Lambda^{s-1}(\tilde u_{n-1}\cdot\nabla\theta_{n-1})$ can be estimated by
\begin{align*}
&\left|\intox\Lambda^{s-1}\tilde\theta_n\cdot\Lambda^{s-1}(\tilde u_{n-1}\cdot\nabla\theta_{n-1})\right|\\
&\le C\|\Lambda^{s-1}\tilde\theta_n\|_{L^2}(\|\tilde u_{n-1}\|_{L^\infty}\|\Lambda^{s-1}\nabla\theta_n\|_{L^2}+\|\Lambda^{s-1}\tilde u_{n-1}\|_{L^2}\|\nabla\theta_n\|_{L^\infty})\\
&\le C\|\Lambda^{s-1}\tilde\theta_n\|_{L^2}(\|\Lambda^{s-1}\tilde u_{n-1}\|_{L^2}\|\Lambda^{s}\theta_n\|_{L^2}+\|\Lambda^{s-1}\tilde\theta_{n-1}\|_{L^2}\|\Lambda^{s}\theta_n\|_{L^2})\\
&\le 2C\|\Lambda^{s-1}\tilde\theta_n\|_{L^2}\|\Lambda^{s-1}\tilde\theta_{n-1}\|_{L^2}\|\Lambda^{s}\theta_n\|_{L^2}
\end{align*}
Hence we deduce from \eqref{estimate on tilde theta n identity} that
\begin{align}\label{estimate on tilde theta 1}
\frac{1}{2}\frac{d}{dt}\|\Lambda^{s-1}\tilde\theta_n\|^2_{L^2}\le 2C\|\Lambda^{s-1}\tilde\theta_n\|_{L^2}(\|\Lambda^s\theta_{n-1}\|_{L^2}\|\Lambda^{s-1}\tilde\theta_n\|_{L^2}+\|\Lambda^{s-1}\tilde\theta_{n-1}\|_{L^2}\|\Lambda^s\theta_n\|_{L^2}).
\end{align}
Using \eqref{bound on theta n} on \eqref{estimate on tilde theta 1},
\begin{align}\label{estimate on tilde theta 2}
\frac{d}{dt}\|\Lambda^{s-1}\tilde\theta_n\|_{L^2}\le 4\sqrt{2}C(\|\Lambda^s\theta_0\|_{L^2}\|\Lambda^{s-1}\tilde\theta_n\|_{L^2}+\|\Lambda^{s-1}\tilde\theta_{n-1}\|_{L^2}\|\Lambda^s\theta_0\|_{L^2}).
\end{align}
Integrating \eqref{estimate on tilde theta 2} over $t$ and choosing $T$ small enough, we obtain
\begin{align}
\sup_{t\in[0,T]}\|\Lambda^{s-1}\tilde\theta_{n}(\cdot,t)\|_{L^2}\le\frac{1}{2}\sup_{t\in[0,T]}\|\Lambda^{s-1}\tilde\theta_{n-1}(\cdot,t)\|_{L^2}.
\end{align}
Thus $\theta_n$ is Cauchy in $L^\infty(0,T;H^{s-1})$ with $\theta_n$ converges strongly to $\theta$ in $L^\infty(0,T,H^{s-1})$. Since we assume that $s>\frac{d}{2}+1$, this also implies that the strong convergence occurs in a H\"{o}lder space relative to $x$ as $n\rightarrow\infty$, hence the limiting function $\theta$ is a solution of \eqref{abstract active scalar eqn nu=0}. Uniqueness of $\theta$ follows by the same argument given in \cite{FRV12} and we omit the details. It finishes the proof of Theorem~\ref{Local wellposedness in Sobolev space}. 
\end{step}
\end{proof}
\section{Convergence of solutions as $\nu\rightarrow0$}\label{Convergence of solutions}

In this section, we address the convergence of solutions to \eqref{abstract active scalar eqn} as $\nu\rightarrow0$ under the assumption A4 and give the proof of Theorem~\ref{Convergence of solutions as nu goes to 0}. Depending on the assumptions A$5_1$ and A$5_2$, we can address the convergence of solutions in two cases respectively:
\begin{itemize}
\item As we discussed before, it was proved in \cite{FV11b} that under the assumption A$5_1$, the equation \eqref{abstract active scalar eqn} for $\nu=0$ is ill-posed in the sense of Hadamard over $L^2$. Hence we focus on the case for Gevrey-class solutions $\theta^\nu$ to \eqref{abstract active scalar eqn}. By Theorem~\ref{Gevrey-class local wellposedness}, given Gevrey-class $s$ initial datum $\theta_0$ and forcing $S$, there exists $\bar{T},\bar{\tau}>0$ and a unique Gevrey-class solution $\theta^\nu$ to \eqref{abstract active scalar eqn} defined on $[0,\bar{T}]$ with radius of convergence at least $\bar{\tau}$ for all $\nu\ge0$. A natural question is the following: will the Gevrey-class solutions $\theta^\nu$ converge as $\nu\rightarrow0$? The answer is affirmative and is presented in Theorem~\ref{Convergence of solutions as nu goes to 0}, which shows that the Gevrey-class solutions $\theta^\nu$ converges to $\theta^0$ in some Gevrey-class norm as $\nu\rightarrow0$.
\item On the other hand, when assumption A$5_2$ is in force, by Theorem~\ref{Local wellposedness in Sobolev space}, the equation \eqref{abstract active scalar eqn} for $\nu=0$ is locally wellposed in Sobolev space $H^{s}$ for $s>\frac{d}{2}+1$. For sufficiently smooth initial data $\theta_0$ and forcing term $S$, we aim at showing that $\|(\theta^{\nu}-\theta^0)(\cdot,t)\|_{H^s}\rightarrow0$ as $\nu\rightarrow0$ for $s>\frac{d}{2}+1$ and $t\in[0,T]$. Such result is parallel to the one proved in \cite{FS18}, in which the authors proved that if $\theta^{\nu},\theta^0$ are $C^\infty$ smooth classical solutions of the diffusive system \eqref{abstract active scalar eqn diffusive} for $\nu>0$ and $\nu=0$ respectively with initial datum $\theta_0\in L^2$ and forcing term $S\in C^\infty$, then $\|(\theta^{\nu}-\theta^0)(\cdot,t)\|_{H^s}\rightarrow0$ as $\nu\rightarrow0$ for $s\ge0$ and $t>0$.
\end{itemize}
\begin{remark}
In the diffusive system \eqref{abstract active scalar eqn diffusive} studied in \cite{FS18} there is no smoothing assumption imposed on $\{T_{ij}^{\nu}\}_{\nu\ge0}$ when $\nu >0$. The main reason for the difference is that the diffusive term $\kappa\Delta\theta^\nu$ present in \eqref{abstract active scalar eqn diffusive} is sufficient to smooth out the solution $\theta^\nu$ for all $\nu\ge0$.
\end{remark}
\begin{proof}[\bf Proof of Theorem~\ref{Convergence of solutions as nu goes to 0}]
We divide the proof into two cases:
\setcounter{case}{0}
\begin{case}
When A5$_1$ is in force. Fix $s\ge1$ and $r>\frac{d}{2}+\frac{3}{2}$. Throughout this proof, $C>0$ is a generic constant which depends on $C_0, \theta_0,S$, $s,r,d,\bar{T},\bar{\tau}$ and is independent of $\nu$.  Let $\theta^\nu,\theta^0$ be the Gevrey-class $s$ solutions to \eqref{abstract active scalar eqn} on $[0,\bar{T}]$ as obtained in Theorem~\ref{Gevrey-class local wellposedness}. We define $\phi^\nu=\theta^\nu-\theta^0$ and write
\begin{equation*}
\|\phi^\nu\|_{\tau,r}^2=\|\Lambda^re^{\tau\Lambda^\frac{1}{s}}\phi^\nu\|_{L^2}^2=\sum_{k\in\mathbb{Z}^d_*}|k|^{2r}e^{2\tau|k|^\frac{1}{s}}|\widehat{\phi^\nu}(k)|^2.
\end{equation*}
Then $\phi^\nu$ satisfies the following equation on $[0,\bar{T}]$
\begin{align}\label{eqn of phi}
\partial_t \phi^\nu+(u^\nu-u^0)\cdot\nabla\theta^0+u^\nu\cdot\nabla\phi^\nu=0,
\end{align}
where $u^0_j=\partial_{x_i} T_{ij}^{\nu}[\theta^0]$ for all $i,j$. From \eqref{eqn of phi}, we have the {\it a priori} estimate
\begin{align}\label{a priori estimate on phi in analytic norm}
\frac{1}{2}\frac{d}{dt}\|\phi^\nu\|^2_{\tau,r}&=\dot{\tau}\|\Lambda^\frac{1}{2s}\phi^\nu\|^2_{\tau,r}+\langle (u^\nu-u^0)\cdot\nabla\theta,(-\Delta)^re^{2\tau(-\Delta)^\frac{1}{2}}\phi^\nu\rangle+\langle u^\nu\cdot\nabla\phi^\nu,(-\Delta)^re^{2\tau(-\Delta)^\frac{1}{2}}\phi^\nu\rangle\notag\\
&=\dot{\tau}\|\Lambda^\frac{1}{2s}\phi^\nu\|^2_{\tau,r}+\mathcal{R}_1+\mathcal{R}_2.
\end{align}
Using Plancherel's theorem, the nonlinear term $\mathcal{R}_1$ can be written as
\begin{align*}
\mathcal{R}_1=i(2\pi)^d\sum_{j+k=l;j,k,l\in\mathbb{Z}_*^d}\widehat{(u^\nu-u^0)}(j)\cdot k\widehat{\theta^0}(k)|l|^{2r}e^{2r|l|}\hat{\phi^\nu}(-l).
\end{align*}
The term $\mathcal{R}_1$ can be estimated as follows.
\begin{align}\label{estimates on R_3}
\mathcal{R}_1&\le C\sum_{j+k=l;j,k,l\in\mathbb{Z}_*^d}|j||k|(|j|^r+|k|^r)|\hat{\phi^\nu}|e^{\tau|j|}|\widehat{\theta^0}(k)|e^{\tau|k|}|l|^r|\hat{\phi^\nu}(l)|e^{\tau|l|}\notag\\
&\qquad+C\sum_{j+k=l;j,k,l\in\mathbb{Z}_*^d}|(\widehat{T^\nu}-\widehat{T^0})(j)||\widehat{\theta^0}(j)||j||k|(|j|^r+|k|^r)e^{\tau|j|}|\widehat{\theta^0}(k)|e^{\tau|k|}|l|^r|\hat{\phi^\nu}(l)|e^{\tau|l|}\notag\\
&\le C\|\Lambda^\frac{1}{2s}\theta^0\|_{\tau,r}\|\Lambda^\frac{1}{2s}\phi^\nu\|_{\tau,r}\|\phi^\nu\|_{\tau,r}\notag\\
&\qquad+C\|\Lambda^\frac{1}{2s}\theta^0\|_{\tau,r}\|\Lambda^\frac{1}{2s}\phi^\nu\|_{\tau,r}\sum_{j\in\mathbb{Z}_*^d}|j|^\frac{3}{2}|\widehat{\theta^0}(j)|e^{\tau|j|}|(\widehat{T^\nu}-\widehat{T^0})(j)|\notag\\
&\le C\|\Lambda^\frac{1}{2s}\theta^0\|_{\tau,r}\|\Lambda^\frac{1}{2s}\phi^\nu\|_{\tau,r}\|\phi^\nu\|_{\tau,r}\notag\\
&\qquad+C\|\Lambda^\frac{1}{2s}\theta^0\|_{\tau,r}\|\Lambda^\frac{1}{2s}\phi^\nu\|_{\tau,r}\left(\sum_{j\in\mathbb{Z}_*^d}|j|^{d+3}|\widehat{\theta^0}(j)|^2e^{2\tau|j|}|(\widehat{T^\nu}-\widehat{T^0})(j)|^2\right)^\frac{1}{2}\left(\sum_{j\in\mathbb{Z}_*^d}|j|^{-d}\right)^\frac{1}{2}\notag\\
&\le C\|\Lambda^\frac{1}{2s}\theta^0\|_{\tau,r}\|\Lambda^\frac{1}{2s}\phi^\nu\|_{\tau,r}\|\phi^\nu\|_{\tau,r}\notag\\
&\qquad+C\|\Lambda^\frac{1}{2s}\theta^0\|_{\tau,r}\|\Lambda^\frac{1}{2s}\phi^\nu\|_{\tau,r}\left(\sum_{j\in\mathbb{Z}_*^d}|j|^{d+3}|\widehat{\theta^0}(j)|^2e^{2\tau|j|}|(\widehat{T^\nu}-\widehat{T^0})(j)|^2\right)^\frac{1}{2}
\end{align}
where the last inequality holds since $d\ge2$ and $\left(\sum_{j\in\mathbb{Z}_*^d}|j|^{-d}\right)^\frac{1}{2}<\infty$. Similarly, $\mathcal{R}_2$ can be estimated by
\begin{align}\label{estimates on R_4}
\mathcal{R}_2\le C\|\Lambda^\frac{1}{2s}\phi^\nu\|_{\tau,r}^2\|\theta^\nu\|_{\tau,r}.
\end{align}
Using the bounds \eqref{estimates on R_3} and \eqref{estimates on R_4} on \eqref{a priori estimate on phi in analytic norm}, we obtain
\begin{align*}
\frac{1}{2}\frac{d}{dt}\|\phi^\nu\|^2_{\tau,r}&\le\left(\dot{\tau}+C\|\theta^\nu\|_{\tau,r}+C\|\Lambda^\frac{1}{2s}\theta^0\|_{\tau,r}^2\right)\|\Lambda^\frac{1}{2s}\phi^\nu\|_{\tau,r}^2\\
&\qquad+C\|\phi^\nu\|_{\tau,r}^2+C\sum_{j\in\mathbb{Z}_*^d}|j|^{d+3}|\widehat{\theta^0}(j)|^2e^{2\tau|j|}|(\widehat{T^\nu}-\widehat{T^0})(j)|^2.
\end{align*}
Choose $\tau=\tau(t)\le\bar{\tau}$ such that
\begin{align*}
\left\{ \begin{array}{l}
\dot{\tau}+C\|\theta^\nu\|_{\tau,r}+C\|\Lambda^\frac{1}{2s}\theta^0\|_{\tau,r}^2<0, \\
\tau<\bar{\tau},
\end{array}\right.
\end{align*}
then using the bounds \eqref{bound on solution 2} and \eqref{bound on solution 1}, there exists $T<\bar{T}$ such that for $t\in[0,T]$, we have
\begin{align*}
\frac{d}{dt}\|\phi^\nu\|_{\tau,r}^2\le C\|\phi^\nu\|_{\tau,r}^2+C\sum_{j\in\mathbb{Z}_*^d}|j|^{d+3}|\widehat{\theta^0}(j)|^2e^{2\tau|j|}|(\widehat{T^\nu}-\widehat{T^0})(j)|^2.
\end{align*}
Integrating the above with respect to $t$, for $t\in[0,T]$, we obtain
\begin{align*}
\|\phi^\nu(\cdot,t)\|_{\tau,r}^2\le e^{CT}C\sum_{j\in\mathbb{Z}_*^d}|j|^{d+3}|\widehat{\theta^0}(j)|^2e^{2\tau|j|}|(\widehat{T^\nu}-\widehat{T^0})(j)|^2.
\end{align*}
Since $\|\theta^0\|_{\tau,r}<\infty$ with $r>\frac{d}{2}+\frac{3}{2}$, it implies $\sum_{j\in\mathbb{Z}_*^d}|j|^{d+3}|\widehat{\theta^0}(j)|^2e^{2\tau|j|}<\infty$, and hence by the assumption A4,
\begin{align*}
\lim_{\nu\rightarrow0}\sum_{j\in\mathbb{Z}_*^d}|j|^{d+3}|\widehat{\theta^0}(j)|^2e^{2\tau|j|}|(\widehat{T^\nu}-\widehat{T^0})(j)|^2=0.
\end{align*}
Therefore the result \eqref{convergence} follows.
\end{case}
\begin{case} When A5$_2$ is in force. Fix $s>\frac{d}{2}+1$, let $\theta^\nu,\theta^0$ be the $H^s$ to \eqref{abstract active scalar eqn} on $[0,T]$ as obtained in Theorem~\ref{Local wellposedness in Sobolev space}. We define $\phi^\nu=\theta^{\nu}-\theta^0$, then $\phi^\nu$ satisfies \eqref{eqn of phi} on $[0,T]$. 

We first show that, for $t\in[0,T]$,
\begin{align}\label{L2 convergence of phi}
\lim_{\nu\rightarrow0}\|\phi^\nu(\cdot,t)\|_{L^2}=0.
\end{align}
Following the proof of Theorem~\ref{Local wellposedness in Sobolev space}, shrinking the time $T$ if necessary, there exists $C=C(T,\theta_0,S)>0$ independent of $\nu$ such that, for all $\nu\ge0$,
\begin{align}\label{uniform bound on theta}
\sup_{0\le t\le T}\|\theta^\nu(\cdot,t)\|_{H^s}\le C.
\end{align}
We multiply \eqref{eqn of phi} by $\phi^\nu$ and integrate, for $t\in[0,T]$,
\begin{align}\label{integral eqn for phi}
\frac{1}{2}\frac{d}{dt}\|\phi^\nu(\cdot,t)\|_{L^2}^2=-\int(u^{\nu}-u^0)\cdot\nabla\theta^0\cdot\phi^\nu(x,t)dx.
\end{align}
We estimate the right side of \eqref{integral eqn for phi} as follows. Using Sobolev embedding theorem and the bound \eqref{uniform bound on theta},
\begin{align}\label{estimate on the cross term for phi}
\left|-\int(u^{\nu}-u^0)\cdot\nabla\theta^0\cdot\phi^\nu(x,t)dx\right|&\le\|(u^{\nu}-u^0)(\cdot,t)\|_{L^2}\|\phi^\nu(\cdot,t)\|_{L^2}\|\nabla\theta^0(\cdot,t)\|_{L^\infty}\notag\\
&\le C\|(u^{\nu}-u^0)(\cdot,t)\|_{L^2}\|\phi^\nu(\cdot,t)\|_{L^2}\notag\\
&\le\frac{C}{2}\|(u^{\nu}-u^0)(\cdot,t)\|_{L^2}^2+\frac{C}{2}\|\phi^\nu(\cdot,t)\|_{L^2}^2.
\end{align}
We focus on the term $\|(u^{\nu}-u)(\cdot,t)\|^2_{L^2}$ as in \eqref{estimate on the cross term for phi}. Using Plancherel Theorem and assumption A$5_2$, for each $j$,
\begin{align*}
\|(u_j^{\nu}-u_j)(\cdot,t)\|^2_{L^2}&=\sum_{k\in\mathbb{Z}^d}|\widehat{(u_j^{\nu}-u_j)}(k,t)|^2\notag\\
&=\sum_{k\in\mathbb{Z}^d}|(\widehat{\partial_{x_i} T_{ij}^{\nu}}\widehat{\theta^{\nu}}-\widehat{\partial_{x_i} T^0_{ij}}\widehat{\theta^0})(k,t)|^2\notag\\
&\le\sum_{k\in\mathbb{Z}^d}|\widehat{\partial_{x_i} T_{ij}^{\nu}}|^2|\widehat\phi|^2(k,t)+\sum_{k\in\mathbb{Z}^3}|\widehat{T_{ij}^{\nu}}-\widehat{T^0_{ij}}|^2|\widehat{\nabla\theta^0}|^2(k,t)\notag\\
&\le C_0\|\phi^\nu(\cdot,t)\|_{L^2}^2+I(\nu,t),
\end{align*}
where $I(\nu,t)=\sum_{k\in\mathbb{Z}^3}|\widehat{T_{ij}^{\nu}}-\widehat{T^0_{ij}}|^2|\widehat{\nabla\theta^0}|^2(k,t)$. Applying the above estimate on \eqref{estimate on the cross term for phi}, we obtain 
\begin{equation*}
\frac{1}{2}\frac{d}{dt}\|\phi^\nu(\cdot,t)\|_{L^2}^2\le\frac{C}{2}\Big(C_0\|\phi^\nu(\cdot,t)\|_{L^2}^2+I(\nu,t)\Big)+\frac{C}{2}\|\phi^\nu(\cdot,t)\|_{L^2}^2.
\end{equation*}
For $t\in[0,T]$, since $\|\theta^0(\cdot,t)\|_{L^2}<\infty$, by assumption A4, we have $\lim_{\nu\rightarrow0}I(\nu,t)=0$. Hence taking $\nu\rightarrow0$ and using Gr\"{o}nwall's inequality, we conclude that \eqref{L2 convergence of phi} holds for $t\in[0,T]$.

Finally, we apply the Gagliardo-Nirenberg interpolation inequality and the bound \eqref{uniform bound on theta} to obtain, for $t\in[0,T]$,
\begin{align*}
\|(\theta^{\nu}-\theta^0)(\cdot,t)\|_{ H^{s-1}}&\le C(d)\|(\theta^{\nu}-\theta^0)(\cdot,t)\|_{L^2}^{\gamma}\|(\theta^{\nu}-\theta^0)(\cdot,t)\|_{ H^s}^{1-\gamma}\\
&\le C(d)C^{1-\gamma}\|(\theta^{\nu}-\theta^0)(\cdot,t)\|_{L^2}^{\gamma},
\end{align*}
where $\gamma\in(0,1)$ depends on $s$ and $C(d)>0$ is a positive constant which depends on $d$ but is independent of $\nu$. By taking $\nu\rightarrow0$ and applying the $L^2$-convergence \eqref{L2 convergence of phi} just proved, we conclude that \eqref{convergence sobolev} holds for $t\in[0,T]$ as well.
\end{case}
\end{proof}
\section{Applications to physical models}\label{Applications to physical models}
We now apply our results claimed in Section~\ref{main results} to some physical models, namely the magnetogeostrophic (MG) equations and the incompressible porous media (IPMB) equations discussed in Section~\ref{introduction}.
\subsection{Magnetogeostrophic equations}
We first consider the following magnetogeostrophic (MG) equation in the domain $\mathbb{T}^3\times(0,\infty)=[0,2\pi]^3\times(0,\infty)$ with periodic boundary conditions:
\begin{align}
\label{MG active scalar} \left\{ \begin{array}{l}
\partial_t\theta^{\nu}+u^\nu\cdot\nabla\theta^{\nu}=S, \\
u=M^{\nu}[\theta^{\nu}],\theta(x,0)=\theta_0(x)
\end{array}\right.
\end{align}
via a Fourier multiplier operator $M^{\nu}$ which relates $u^\nu$ and $\theta^{\nu}$. More precisely,
\begin{align*}
u^\nu_j=M^{\nu}_j [\theta^{\nu}]=(\widehat{M^{\nu}_j}\hat\theta^{\nu})^\vee
\end{align*}
for $j\in\{1,2,3\}$. The explicit expression for the components of $\widehat M^{\nu}$ as functions of the Fourier variable $k=(k_1,k_2,k_3)\in\Z^3$ are given by \eqref{MG Fourier symbol_1}-\eqref{MG Fourier symbol_4} in Section~\ref{introduction}. We write $M^\nu_j=\partial_iT_{ij}^{\nu}$ for convenience. To apply the results from Section~\ref{main results}, it suffices to show that the sequence of operators $\{T^{\nu}_{ij}\}_{\nu\ge0}$ satisfy the assumptions A1--A4 and A$5_1$ given in Section~\ref{introduction}. We first prove the following lemma for the MG equations.
\begin{lemma}\label{convergence of operator}
For each $L>0$, 
\begin{align}\label{stronger assumption A4}
\lim_{\nu\rightarrow0}\sup_{\{k\in\mathbb{Z}^3:k\neq0,|k|\le L\}}\frac{|\widehat M^{\nu}(k)-\widehat M^0(k)|}{|k|}=0.
\end{align}
\end{lemma}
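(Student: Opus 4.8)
The plan is to exploit the fact that, for a fixed $L$, the index set $E_L=\{k\in\mathbb{Z}^3:k\neq0,\ |k|\le L\}$ is \emph{finite}, so that the supremum in \eqref{stronger assumption A4} is a maximum over finitely many modes. The statement then reduces to the mode-by-mode claim that $\widehat M^{\nu}(k)\to\widehat M^{0}(k)$ as $\nu\to0^{+}$ for each fixed $k\in\mathbb{Z}^3\setminus\{0\}$: granting this, for every $\varepsilon>0$ and each of the finitely many $k\in E_L$ one picks $\nu_k>0$ with $|\widehat M^{\nu}(k)-\widehat M^{0}(k)|<\varepsilon|k|$ whenever $0<\nu<\nu_k$, and then $0<\nu<\min_{k\in E_L}\nu_k$ forces the maximum to be $<\varepsilon$. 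So the entire content of the lemma is a finiteness argument plus a pointwise limit.

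To obtain the pointwise limit I would argue directly from the closed forms \eqref{MG Fourier symbol_1}--\eqref{MG Fourier symbol_4}. Fix $k=(k_1,k_2,k_3)\neq0$ and set $D^{0}(k):=D(k)|_{\nu=0}=|k|^2k_3^2+k_2^4$. Since $\nu\mapsto D(k)=|k|^2k_3^2+(k_2^2+\nu|k|^4)^2$ is continuous and nondecreasing on $[0,\infty)$, we have $D(k)\ge D^{0}(k)$ for all $\nu\ge0$; hence at any mode with $D^{0}(k)>0$ the denominator stays bounded away from zero uniformly in $\nu\ge0$, and each component $\widehat M^{\nu}_j(k)$ is the ratio of a polynomial of degree $1$ in $\nu$ (the numerator in \eqref{MG Fourier symbol_1}--\eqref{MG Fourier symbol_3}) to the nonvanishing polynomial $D(k)$. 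Being a rational, hence continuous, function of $\nu$ on a neighbourhood of $0$, it converges to $\widehat M^{0}_j(k)$ as $\nu\to0^{+}$; one in fact reads off a quantitative estimate $|\widehat M^{\nu}(k)-\widehat M^{0}(k)|\le C(k)\,\nu$ for small $\nu$, which is far more than is needed after dividing by $|k|$.

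The one place I expect to require genuine care is the behaviour on the degenerate modes $k=(k_1,0,0)$ with $k_1\neq0$, where $D^{0}(k)=0$ and the previous paragraph does not apply. There each numerator in \eqref{MG Fourier symbol_1}--\eqref{MG Fourier symbol_3} also vanishes, so at $\nu=0$ all three components have the indeterminate form $0/0$, and one must fall back on the convention by which $\widehat M^{0}$ — and correspondingly $\widehat M^{\nu}$ — is assigned on this exceptional axis in order to check that the difference quotient in \eqref{stronger assumption A4} remains controlled there. Once this bookkeeping on $\{k_2=k_3=0\}$ is settled, assembling the finitely many pointwise limits yields \eqref{stronger assumption A4} and completes the proof.
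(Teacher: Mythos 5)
Your proof is correct and reaches the conclusion by a mildly different route from the paper's. You observe that $\{k\in\mathbb{Z}^3:0<|k|\le L\}$ is finite, so the supremum is a maximum over finitely many modes and the lemma reduces to the pointwise limit $\widehat M^{\nu}(k)\to\widehat M^{0}(k)$, which follows from continuity in $\nu$ of the rational symbol wherever $D(k)|_{\nu=0}=|k|^2k_3^2+k_2^4>0$. The paper instead writes $\widehat M^{\nu}_1(k)-\widehat M^{0}_1(k)$ as a single fraction, bounds the denominator below by $|k|^{5}k_3^{4}$ and each numerator term above using $|k|\le L$, arriving at the explicit uniform bound $4\nu L^{10}+2\nu^{2}L^{12}$; this yields an $O(\nu)$ rate directly, whereas your finiteness argument gives only convergence (though, as you note, the same mode-by-mode estimate $|\widehat M^{\nu}(k)-\widehat M^{0}(k)|\le C(k)\nu$ is available). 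The degenerate modes you flag are a genuine point, not mere bookkeeping: on $k_2=k_3=0$ one computes $\widehat M^{\nu}_3(k)=1/(\nu k_1^{2})$, which diverges as $\nu\to0$, so the statement fails on that axis unless one invokes the standard MG convention (as in \cite{FV11a}) that the symbol is set to zero on $\{k_3=0\}$, equivalently that one works with fields of zero vertical mean. The paper's proof is silent on this and implicitly assumes $k_3\neq0$ when it divides by $k_3^{4}$; your explicit acknowledgment of the exceptional set is the more careful treatment, and once that convention is adopted both arguments are complete.
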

\begin{proof}[\bf Proof]
We only give the details for $\widehat M^{\nu}_1$, since the cases for $\widehat M^{\nu}_2$ and $\widehat M^{\nu}_3$ are similar. We fix $L>0$, then for each $k\in\mathbb{Z}^3\backslash(\{k=0\}$ with $|k|\le L$, we have
\begin{align*}
&\frac{|\widehat M_1^{\nu}(k)-\widehat M^0_1(k)|}{|k|}\\
&=\frac{|-\nu k_1k_3^3|k|^6+\nu k_1 k_2^4k_3|k|^4-\nu^2 k_2k_3|k|^{10}+\nu^2 k_1k_2^2k_3|k|^8-2\nu k_2^3 k_3|k|^6|}{(|k|^2k_3^2+\nu^2|k|^8+2\nu|k|^4k_2^2+k_2^4)(k_3^2|k|^2+k_2^4)|k|}.\\
&\le\frac{\nu|k_1||k_3|^3|k|^6}{|k|^{5}k_3^4}+\frac{\nu|k_1|k_2^4|k_3||k|^4}{|k|^{5}k_3^4}+\frac{\nu^2|k_2||k_3||k|^{10}}{|k|^{5}k_3^4}+\frac{\nu^2|k_1|k_2^2|k_3|k|^8}{|k|^{5}k_3^4}+\frac{2\nu|k_2|^3|k_3||k|^6}{|k|^{5}k_3^4}\\
&\le\nu L^{10}+\nu L^{10}+\nu^2 L^{12}+\nu^2 L^{12}+2\nu L^{10}.
\end{align*}
Hence 
\begin{align*}
\lim_{\nu\rightarrow0}\sup_{\{k\in\mathbb{Z}^3:k\neq0,|k|\le L\}}\frac{|\widehat M_1^{\nu}(k)-\widehat M^0_1(k)|}{|k|}=0.
\end{align*}
\end{proof}
\begin{proposition}\label{assumption checked MG}
Let $M^\nu_j=\partial_iT_{ij}^{\nu}$, where $M^\nu$ is given by \eqref{MG Fourier symbol_1}-\eqref{MG Fourier symbol_3}. Then $T_{ij}^{\nu}$ satisfy the assumptions A1--A4 and A$5_1$ given in Section~\ref{introduction}.
\end{proposition}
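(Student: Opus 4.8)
The plan is to verify each of the assumptions A1--A4 and A5$_1$ directly from the explicit Fourier symbols \eqref{MG Fourier symbol_1}--\eqref{MG Fourier symbol_4}, writing $M^\nu_j = \partial_i T^\nu_{ij}$ so that $\widehat{T^\nu_{ij}}(k)$ differs from $\widehat{M^\nu_j}(k)$ by a factor of $(ik_i)^{-1}$ (after a suitable choice of which index $i$ to pull out). First I would address A1: since $\nabla\cdot u^\nu=0$ is equivalent to $\sum_j k_j \widehat{M^\nu_j}(k)=0$, one checks from \eqref{MG Fourier symbol_1}--\eqref{MG Fourier symbol_3} that $k_1\widehat M^\nu_1+k_2\widehat M^\nu_2+k_3\widehat M^\nu_3 = [k_1 k_2 k_3|k|^2 - k_1^2 k_3(k_2^2+\nu|k|^4) - k_1 k_2 k_3|k|^2 - k_2^2 k_3(k_2^2+\nu|k|^4) + k_3(k_1^2+k_2^2)(k_2^2+\nu|k|^4)]D(k)^{-1} = 0$, which is the desired cancellation $\partial_i\partial_j T^\nu_{ij}f=0$.

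Next, for A5$_1$ I would show $\sup_{k\neq 0}|\widehat T^\nu_{ij}(k)|\le C_0$ uniformly in $\nu\in[0,1]$. This reduces to the known fact (from \cite{FV11a}) that $|\widehat M^\nu(k)|\le C|k|$ uniformly in $\nu\ge 0$: indeed the numerators in \eqref{MG Fourier symbol_1}--\eqref{MG Fourier symbol_3} are bounded by $C|k|^6 + C\nu|k|^8$ while $D(k)\ge \max\{|k|^2 k_3^2, (k_2^2+\nu|k|^4)^2\}\gtrsim |k|^2 k_3^2 + k_2^4$, and a case analysis on whether $|k_3|$ or $|k_2|$ dominates gives the order-one bound on $\widehat M^\nu$; dividing by $|k_i|\ge 1$ for an appropriately chosen nonzero component then yields the uniform bound on $\widehat T^\nu_{ij}$, and $\nu=0$ is the same estimate. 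For A3 I would produce, for fixed $\nu>0$, the bound $|\widehat T^\nu_{ij}(k)|\le C_\nu|k|^{-3}$: here one uses that $D(k)\ge \nu^2|k|^8$, so $|\widehat M^\nu_j(k)|\le C(1+\nu)|k|^8/(\nu^2|k|^8)\cdot|k|^{-?}$ — more carefully, the numerator is $O(|k|^6+\nu|k|^8)$ and $D(k)\ge c\,\nu^2|k|^8$ gives $|\widehat M^\nu(k)|\le C_\nu|k|^{-2}$, hence $|\widehat T^\nu_{ij}(k)|\le C_\nu|k|^{-3}$.

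For A2, I would invoke that each $T^\nu_{ij}$ is (for $\nu\ge 0$) a Fourier multiplier whose symbol, after the division, is a bounded symbol homogeneous of degree $-1$ away from the origin in the relevant regions and otherwise of Calder\'on--Zygmund type; more directly, since $\partial_i T^\nu_{ij}=M^\nu_j$ is a multiplier bounded by $C|k|$, the operator $T^\nu_{ij}$ maps $L^\infty\to BMO$ by the standard result that an operator whose gradient is a zeroth-order-in-the-Mikhlin-sense singular integral (equivalently $|\widehat T^\nu_{ij}(k)|\le C$ together with appropriate decay of derivatives of the symbol) is $L^\infty\to BMO$ bounded. Finally A4 is exactly the content of Lemma~\ref{convergence of operator}: from \eqref{stronger assumption A4}, for each fixed $L$ the sum $\sum_{|k|\le L}|\widehat{T^\nu_{ij}}(k)-\widehat{T^0_{ij}}(k)|^2|\widehat g(k)|^2 \le (\sup_{|k|\le L}|\widehat M^\nu(k)-\widehat M^0(k)|/|k|)^2\sum_{|k|\le L}|k|^{-2}|\widehat g(k)|^2\to 0$ as $\nu\to0$, while the tail $\sum_{|k|>L}$ is controlled by $4C_0^2\sum_{|k|>L}|\widehat g(k)|^2$, which is made small by choosing $L$ large since $g\in L^2$; a standard $\varepsilon/2$ argument closes it. The main obstacle is the careful case analysis for the uniform-in-$\nu$ order-one bound on $\widehat M^\nu$ underlying both A5$_1$ and A2 — keeping track of the "curved region" $k_3=O(1)$, $k_2=O(|k_1|^{1/2})$ where the symbol is largest — but this is precisely the estimate established in \cite{FV11a}, so I would cite it and only reproduce the short computation needed here.
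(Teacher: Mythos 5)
Your proposal follows essentially the same route as the paper: verify A1 by the explicit cancellation $\sum_j k_j\widehat M^\nu_j=0$, reduce A5$_1$ to the uniform order-one bound $|\widehat M^\nu(k)|\le C|k|$ from \cite{FV11a} via a case analysis on the denominator $D(k)$, get A3 from the lower bound $D(k)\ge\nu^2|k|^8$, defer A2 to standard Calder\'on--Zygmund/multiplier theory, and prove A4 by splitting frequencies at a level $L$ and combining Lemma~\ref{convergence of operator} with the uniform bound --- which is exactly the paper's argument (the paper's displayed case analysis uses the threshold $|k|\lessgtr\nu^{-1/2}$ rather than comparing $|k_2|$ and $|k_3|$, but this is cosmetic).

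One arithmetic slip worth fixing: the numerators in \eqref{MG Fourier symbol_1}--\eqref{MG Fourier symbol_3} are bounded by $C|k|^4+C\nu|k|^6$, not $C|k|^6+C\nu|k|^8$. With your stated bound the conclusions do not follow as written --- e.g.\ $|k|^6/(\nu^2|k|^8)$ only gives $|\widehat M^\nu|\le C_\nu$ rather than $C_\nu|k|^{-2}$, and $|k|^6/(|k|^2k_3^2)$ gives $|k|^4$ rather than $|k|$ in the region $k_3=\mathcal O(1)$. With the correct numerator bound both the A3 decay and the uniform A5$_1$ bound come out as you intend, so this is a local repair rather than a structural problem.
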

\begin{proof}[\bf Proof]
The details for the proof can be found in \cite{FS18} Lemma~5.1--5.2 and from the discussion in (\cite{FV11a}, Section 4). For example, to show $T_{ij}^{\nu}$ satisfy the assumption A3, we only give the details for $\widehat M^{\nu}_1$ since the cases for $\widehat M^{\nu}_2$ and $\widehat M^{\nu}_3$ are almost identical. We fix $\nu\in(0,1]$ and consider the following cases:
\setcounter{case}{0}
\begin{case} $|k|>\nu^{-\frac{1}{2}}$. Then for each $k\in\mathbb{Z}^3/\{k=0\}$, 
\begin{align*}
\frac{|\widehat M_1^{\nu}(k)|}{|k|}=\frac{|k_2k_3|k|^2-k_1k_3(k_2^2+\nu|k|^4)|}{|k|(|k|^2k_3^2+(k_2^2+\nu|k|^4)^2)}.
\end{align*}
Since $k\neq0$, so $|k|\ge|k_j|\ge1$ for $j=1,2,3$, in particular $|k|^{-1}<\nu^\frac{1}{2}$. Hence we obtain
\begin{align*}
\frac{|\widehat M_1^{\nu}(k)|}{|k|}&\le\frac{|k_2k_3||k|^2}{|k|^3k_3^2}+\frac{|k_1k_3|k_2^2}{|k|^3k_3^2}+\frac{\nu|k_1k_3||k|^4}{\nu^2|k|^8}\\
&\le\frac{1}{|k_3|}+\frac{1}{|k_3|}+\frac{1}{\nu|k|^2}\\
&\le 2+\frac{\nu}{\nu}=3.
\end{align*}
\end{case}
\begin{case} $|k|\le\nu^{-\frac{1}{2}}$. Then for each $k\in\mathbb{Z}^3/\{k=0\}$,
\begin{align*}
\frac{|\widehat M_1^{\nu}(k)|}{|k|}&\le\frac{|k_2k_3||k|^2}{|k|^3k_3^2}+\frac{|k_1k_3|k_2^2}{|k|^3k_3^2}+\frac{\nu|k_1k_3||k|^4}{|k|^3k_3^2}\\
&\le\frac{1}{|k_3|}+\frac{1}{|k_3|}+\frac{\nu|k|^2}{|k_3|}\\
&\le2+\nu\cdot(\nu^{-\frac{1}{2}})^2=3.
\end{align*}
Combining two cases, we have
\begin{align*}
\sup_{\nu\in(0,1]}\sup_{\{k\in\mathbb{Z}^3:k\neq0\}}\frac{|\widehat M_1^{\nu}(k)|}{|k|}\le3,
\end{align*}
and hence assumption A3 holds for some $C_0>0$ independent of $\nu$, which means that
\begin{equation}\label{bounds on operator M for MG}
\sup_{\nu\in(0,1]}\sup_{\{k\in\mathbb{Z}^3:k\neq0\}}\frac{|\widehat M_1^{\nu}(k)|}{|k|}\le C_0.
\end{equation}
On the other hand, to show $T_{ij}^{\nu}$ satisfy the assumption A4, Fix $g$ with $\|g\|_{L^2}<\infty$ and we claim that
\begin{equation}\label{checking of A4 MG 1}
\lim_{\nu\rightarrow0}\sum_{k\in\mathbb{Z}^d}|\widehat{T^{\nu}_{ij}}(k)-\widehat{T^0_{ij}}(k)|^2|\widehat{g}(k)|^2=0.
\end{equation}
Let $\varepsilon>0$ be given. Then $\dis\sum_{k\in\mathbb{Z}^3}|\widehat{g}(k)|^2<\infty$, so there exists $L=L(\varepsilon)>0$ such that $\dis\sum_{k\in\mathbb{Z}^3, |k|>L}|\widehat{g}(k)|^2<\varepsilon$. Hence for $1\le i,j\le d$, we have
\begin{align}
&\sum_{k\in\mathbb{Z}^d}|\widehat{T^{\nu}_{ij}}(k)-\widehat{T^0_{ij}}(k)|^2|\widehat{g}(k)|^2\notag\\
&\le\sum_{k\in\mathbb{Z}^3:k\neq0}\frac{|\widehat M^{\nu}(k)-\widehat M^0(k)|^2|\widehat{g}(k)|^2}{|k|^2}\notag\\
&=\sum_{k\in\mathbb{Z}^3:k\neq0,|k|\le L}\frac{|\widehat M^{\nu}(k)-\widehat M^0(k)|^2|\widehat{g}(k)|^2}{|k|^2}+\sum_{k\in\mathbb{Z}^3:k\neq0,|k|>L}\frac{(|\widehat M^{\nu}(k)|^2+|\widehat M^0(k)|^2)|\widehat{g}(k)|^2}{|k|^2}\notag\\
&\le\left(\sup_{\{k\in\mathbb{Z}^3:k\neq0,|k|\le L\}}\frac{|\widehat M^{\nu}(k)-\widehat M^0(k)|}{|k|}\right)^2\|g\|_{L^2}^2+2C_0^2\varepsilon.\label{checking of A4 MG 2}
\end{align}
where the last inequality follows by the bound \eqref{bounds on operator M for MG}. Using \eqref{stronger assumption A4} in Lemma~\ref{convergence of operator} and taking $\nu\rightarrow0$ on \eqref{checking of A4 MG 2},
\begin{align*}
\lim_{\nu\rightarrow0}\sum_{k\in\mathbb{Z}^d}|\widehat{T^{\nu}_{ij}}(k)-\widehat{T^0_{ij}}(k)|^2|\widehat{g}(k)|^2\le 2C_0^2\varepsilon.
\end{align*}
Since $\varepsilon>0$ is arbitrary, \eqref{checking of A4 MG 1} follows and therefore $T_{ij}^{\nu}$ satisfy the assumption A4.
\end{case}
\end{proof} 
In view of Proposition~\ref{assumption checked MG}, the abstract Theorem~\ref{Wellposedness in Sobolev space}-\ref{Gevrey-class local wellposedness} and Theorem~\ref{Convergence of solutions as nu goes to 0} may therefore be applied to the MG equations \eqref{MG active scalar} in order to obtain the wellposedness and convergence of Gevrey-class solutions. More precisely, we have
\begin{theorem}[Wellposedness in Sobolev space for the MG equations]\label{Wellposedness in Sobolev space MG}
Let $\theta_0\in W^{s,3}$ for $s\ge0$ and $S$ be a $C^\infty$-smooth source term. Then for each $\nu>0$, we have:
\begin{itemize}
\item if $s=0$, there exists unique global weak solution to \eqref{MG active scalar} such that
\begin{align*}
\theta^\nu&\in BC((0,\infty);L^3),\\
u^\nu&\in C((0,\infty);W^{2,3}).
\end{align*}
In particular, $\theta^\nu(\cdot,t)\rightarrow\theta_0$ weakly in $L^3$ as $t\rightarrow0^+$.
\item if $s>0$, there exists a unique global-in-time solution $\theta^\nu$ to \eqref{MG active scalar} such that $\theta^\nu(\cdot,t)\in W^{s,3}$ for all $t\ge0$. Furthermore, for $s=1$, we have the following single exponential growth in time on $\|\nabla\theta^\nu(\cdot,t)\|_{L^3}$:
\begin{align*}
\|\nabla\theta^\nu(\cdot,t)\|_{L^3}\le C \|\nabla\theta_0\|_{L^3}\exp\left(C\left(t\|\theta_0\|_{W^{1,3}}+t^2\|S\|_{L^\infty}+t\|S\|_{W^{1,3}}\right)\right),
\end{align*}
where $C>0$ is a constant which depends only on some dimensional constants.
\end{itemize}
\end{theorem}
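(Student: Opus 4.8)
The plan is to deduce Theorem~\ref{Wellposedness in Sobolev space MG} as an immediate corollary of the abstract Theorem~\ref{Wellposedness in Sobolev space}, specialized to the dimension $d=3$. Writing the MG constitutive operator in divergence form as $M^\nu_j=\partial_{x_i}T^\nu_{ij}$ with the explicit Fourier symbols \eqref{MG Fourier symbol_1}--\eqref{MG Fourier symbol_4}, the one genuine ingredient is Proposition~\ref{assumption checked MG}, which establishes that the family $\{T^\nu_{ij}\}_{\nu\ge0}$ verifies A1--A4 together with A5$_1$. In particular A1, A2, A3 and A5$_1$ hold, and these are precisely the hypotheses under which Theorem~\ref{Wellposedness in Sobolev space} is stated (taking $i=1$ in A5$_i$). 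So, granting that proposition, the present theorem follows by reading off the conclusions of Theorem~\ref{Wellposedness in Sobolev space} with $d=3$.

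Concretely, fix $\nu>0$. For $\theta_0\in W^{s,3}$ and $S\in C^\infty$, the case $s=0$ of Theorem~\ref{Wellposedness in Sobolev space} yields a unique global weak solution of \eqref{MG active scalar} with $\theta^\nu\in BC((0,\infty);L^3)$ and $u^\nu\in C((0,\infty);W^{2,3})$ (the spaces \eqref{BC solution theta}--\eqref{Continuous u} with $d=3$), together with $\theta^\nu(\cdot,t)\to\theta_0$ weakly in $L^3$ as $t\to0^+$. For $s>0$ the second part of Theorem~\ref{Wellposedness in Sobolev space} gives the unique global-in-time solution with $\theta^\nu(\cdot,t)\in W^{s,3}$ for all $t\ge0$, and for $s=1$ the single-exponential-in-time estimate \eqref{exp growth}, read with $d=3$, is precisely the claimed bound on $\|\nabla\theta^\nu(\cdot,t)\|_{L^3}$, with $C$ the constant furnished by \eqref{exp growth}.

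Thus the proof reduces to two essentially bookkeeping checks: that the abstract hypotheses of Theorem~\ref{Wellposedness in Sobolev space} are met — which is exactly the content of Proposition~\ref{assumption checked MG}, itself obtained directly from the explicit symbols \eqref{MG Fourier symbol_1}--\eqref{MG Fourier symbol_4} — and that the function spaces of the abstract statement specialize correctly ($W^{s,d}\to W^{s,3}$, $L^d\to L^3$, $W^{2,d}\to W^{2,3}$) when $d=3$. There is no new analytic obstacle at this stage; the substantive work has already been carried out in Theorem~\ref{Wellposedness in Sobolev space} and in Proposition~\ref{assumption checked MG}. The closest thing to a ``hard part'' is the verification of A3 for the MG symbols — the smoothing-of-order-two bound $|\widehat T^\nu_{ij}(k)|\le C_\nu|k|^{-3}$ that underlies the global-in-time $W^{s,3}$ theory and the $s=1$ growth estimate — which rests on the explicit-symbol estimates recorded in Proposition~\ref{assumption checked MG} and ultimately on the structure of $D(k)$ in \eqref{MG Fourier symbol_4}.
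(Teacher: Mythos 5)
Your proposal is correct and matches the paper's own treatment: the paper likewise obtains Theorem~\ref{Wellposedness in Sobolev space MG} by invoking Proposition~\ref{assumption checked MG} to verify A1--A4 and A5$_1$ for the MG symbols and then specializing the abstract Theorem~\ref{Wellposedness in Sobolev space} to $d=3$. No further argument is needed.
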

\begin{theorem}[Gevrey-class global wellposedness for the MG equations]\label{Gevrey-class global wellposedness MG}
Fix $s\ge1$. Let $\theta_0$ and $S$ be of Gevrey-class $s$ with radius of convergence $\tau_0>0$. Then for each $\nu>0$, there exists a unique Gevrey-class $s$ solution $\theta^\nu$ to \eqref{MG active scalar} on $\mathbb{T}^3\times[0,\infty)$ with radius of convergence at least $\tau=\tau(t)$ for all $t\in[0,\infty)$, where $\tau$ is a decreasing function satisfying
\begin{align*}
\tau(t)\ge\tau_0e^{-C\left(\|e^{\tau_0\Lambda^\frac{1}{s}}\theta_0\|_{L^2}+2\|e^{\tau_0\Lambda^\frac{1}{s}}S\|_{L^2}\right)t}.
\end{align*}
Here $C>0$ is a constant which depends on $\nu$ but independent of $t$.
\end{theorem}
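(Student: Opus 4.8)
The plan is to obtain Theorem~\ref{Gevrey-class global wellposedness MG} as an immediate consequence of the abstract Theorem~\ref{Gevrey-class global wellposedness} specialized to $d=3$, so that the only genuine task is to verify that the MG operators $\{T^\nu_{ij}\}_{\nu\ge0}$, defined through $M^\nu_j=\partial_{x_i}T^\nu_{ij}$ with $M^\nu$ given by \eqref{MG Fourier symbol_1}--\eqref{MG Fourier symbol_4}, satisfy the hypotheses A1--A3 and A5$_1$ required by Theorem~\ref{Gevrey-class global wellposedness}. This verification is exactly Proposition~\ref{assumption checked MG}, whose proof rests on the symbol analysis of \cite{FV11a} and \cite{FS18}; I would simply invoke it.

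For orientation I would recall why the three quantitative hypotheses hold. Fixing the representation $\widehat{T^\nu_{ij}}(k)=-ik_i|k|^{-2}\widehat{M^\nu_j}(k)$, so that $\sum_i(ik_i)\widehat{T^\nu_{ij}}(k)=\widehat{M^\nu_j}(k)$, assumption A1 --- and hence $\nabla\cdot u^\nu=0$ --- reduces to $k\cdot\widehat M^\nu(k)=0$, which follows by direct computation from \eqref{MG Fourier symbol_1}--\eqref{MG Fourier symbol_3}. For A3 we fix $\nu>0$: by \eqref{MG Fourier symbol_4} one has $D(k)\ge\nu^2|k|^8$, while the numerators in \eqref{MG Fourier symbol_1}--\eqref{MG Fourier symbol_3} are of size $O(|k|^4)+O(\nu|k|^6)$, so $|\widehat M^\nu_j(k)|\le C_\nu|k|^{-2}$ and therefore $|\widehat{T^\nu_{ij}}(k)|\le C_\nu|k|^{-3}$; this is precisely the two orders of smoothing $\|u^\nu\|_{H^2}\le C_\nu\|\theta^\nu\|_{L^2}$ of \eqref{2 order smoothing} that the abstract argument uses. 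Assumption A5$_1$ is the uniform-in-$\nu$ bound $|\widehat{T^\nu_{ij}}(k)|\le|k|^{-1}|\widehat M^\nu_j(k)|\le C_0$, which is exactly the order-one estimate \eqref{bounds on operator M for MG} established in Proposition~\ref{assumption checked MG}; and A2 (boundedness of $T^\nu_{ij}\colon L^\infty(\mathbb{T}^3)\to BMO(\mathbb{T}^3)$) is borrowed from \cite{FV11a}, \cite{FS18} as recorded there.

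With A1--A3 and A5$_1$ in hand, Theorem~\ref{Gevrey-class global wellposedness} applies verbatim: for every fixed $\nu>0$ and every $s\ge1$, Gevrey-class $s$ data $\theta_0$ and $S$ with radius of convergence $\tau_0>0$ produce a unique global-in-time Gevrey-class $s$ solution $\theta^\nu$ of \eqref{MG active scalar} on $\mathbb{T}^3\times[0,\infty)$ whose radius of convergence $\tau(t)$ is a decreasing function obeying the lower bound \eqref{lower bound on tau}, with $C>0$ depending on $\nu$ (through $C_\nu$) but not on $t$; this is exactly the assertion of Theorem~\ref{Gevrey-class global wellposedness MG}. I do not expect any real obstacle here: the whole content is the symbol bookkeeping of Proposition~\ref{assumption checked MG}, and the one point deserving a moment's care is the choice of a single representation of the $T^\nu_{ij}$ that simultaneously realizes the uniform-in-$\nu$ zeroth-order bound A5$_1$ and, for each fixed $\nu>0$, the $\nu$-dependent third-order decay A3 --- which the choice above does.
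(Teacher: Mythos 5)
Your proposal is correct and follows exactly the paper's route: the paper also obtains Theorem~\ref{Gevrey-class global wellposedness MG} by invoking Proposition~\ref{assumption checked MG} to verify A1--A3 and A5$_1$ for the MG symbols and then applying the abstract Theorem~\ref{Gevrey-class global wellposedness} with $d=3$. Your supplementary symbol checks (the representation $\widehat{T^\nu_{ij}}(k)=-ik_i|k|^{-2}\widehat{M^\nu_j}(k)$, the lower bound $D(k)\ge\nu^2|k|^8$ for A3, and the uniform order-one bound for A5$_1$) are consistent with the computations the paper carries out or cites from \cite{FV11a} and \cite{FS18}.
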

\begin{theorem}[Gevrey-class local wellposedness for the MG equations]\label{Gevrey-class local wellposedness MG}
Fix $s\ge1$, $r>3$ and $K_0>0$. Let $\theta_0$ and $S$ be of Gevrey-class $s$ with radius of convergence $\tau_0>0$ and
\begin{align*}
\|\Lambda^re^{\tau_0\Lambda^\frac{1}{s}}\theta^0(\cdot,0)\|_{L^2}\le K_0,\qquad\|\Lambda^re^{\tau_0\Lambda^\frac{1}{s}}S\|_{L^2}\le K_0.
\end{align*}
There exists $\bar{T},\bar{\tau}>0$ and a unique Gevrey-class $s$ solution $\theta^0$ to \eqref{MG active scalar} for $\nu=0$ defined on $\mathbb{T}^3\times[0,\bar{T}]$ with radius of convergence at least $\bar{\tau}$. Moreover, there exists a constant $C=C(K_0)>0$ independent of $\nu$ such that for all $t\in[0,\bar{T}]$,
\begin{align*}
\|\Lambda^re^{\bar{\tau}\Lambda^\frac{1}{s}}\theta^\nu(\cdot,t)\|_{L^2}&\le C,\,\forall\nu>0,\\
\|\Lambda^re^{\bar{\tau}\Lambda^\frac{1}{s}}\theta^0(\cdot,t)\|_{L^2}&\le C.
\end{align*}
Here $\theta^\nu$ are Gevrey-class $s$ solutions to \eqref{MG active scalar} for $\nu>0$ as described in Theorem~\ref{Gevrey-class global wellposedness MG}.
\end{theorem}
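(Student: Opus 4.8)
The plan is to obtain this theorem as an immediate specialization of the abstract Gevrey-class local wellposedness result, Theorem~\ref{Gevrey-class local wellposedness}, once the hypotheses have been matched to the MG setting. First I would invoke Proposition~\ref{assumption checked MG}, which establishes that the sequence of operators $\{T_{ij}^{\nu}\}_{\nu\ge0}$ associated with the MG Fourier multiplier symbols \eqref{MG Fourier symbol_1}--\eqref{MG Fourier symbol_4} satisfies assumptions A1--A4 and A5$_1$. Since the spatial dimension here is $d=3$, the threshold $r>\frac{d}{2}+\frac{3}{2}$ appearing in Theorem~\ref{Gevrey-class local wellposedness} reads $r>3$, which is precisely the hypothesis imposed on $r$ in the statement; the Gevrey-class bounds on $\theta_0$ and $S$ are assumed outright. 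Thus all the hypotheses of Theorem~\ref{Gevrey-class local wellposedness} are in force for \eqref{MG active scalar}.

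Next I would apply the first conclusion of Theorem~\ref{Gevrey-class local wellposedness} (which requires only A1--A2 and A5$_1$), obtaining $\bar{T},\bar{\tau}>0$ and a unique Gevrey-class $s$ solution $\theta^0$ to \eqref{MG active scalar} with $\nu=0$ on $\mathbb{T}^3\times[0,\bar{T}]$ with radius of convergence at least $\bar{\tau}$, together with the bound $\|\Lambda^re^{\bar{\tau}\Lambda^{1/s}}\theta^0(\cdot,t)\|_{L^2}\le C$ on $[0,\bar{T}]$ with $C=C(K_0)$. Because Proposition~\ref{assumption checked MG} also delivers assumption A3, the second conclusion of Theorem~\ref{Gevrey-class local wellposedness} then applies and yields the analogous bound $\|\Lambda^re^{\bar{\tau}\Lambda^{1/s}}\theta^{\nu}(\cdot,t)\|_{L^2}\le C$ for every $\nu>0$, where $\theta^{\nu}$ is the Gevrey-class $s$ solution furnished by Theorem~\ref{Gevrey-class global wellposedness MG}; these two families of solutions agree on $[0,\bar{T}]$ by uniqueness, as recorded in the remark following the proof of Theorem~\ref{Gevrey-class local wellposedness}.

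The only point requiring genuine care — and the closest thing here to an obstacle — is verifying that $\bar{T}$, $\bar{\tau}$, and the constant $C$ are truly independent of $\nu$. This is guaranteed by the structure of the proof of Proposition~\ref{Gevrey-class local existence nu=0}: the maximal existence time is $T_*=\tau_0/(4CK_0)$, where the constant $C$ is controlled by the $\nu$-uniform constant $C_0$ of assumption A5$_1$, and for the MG equations this $C_0$ can be taken to be the purely dimensional constant $3$ exhibited in the proof of Proposition~\ref{assumption checked MG} (cf.\ \eqref{bounds on operator M for MG}). Consequently $\bar{\tau}=\tau(\bar{T})$ and the a priori Gevrey-norm bound $3K_0$ are common to all $\nu\in[0,1]$, and the theorem follows with no estimates beyond those already carried out in Section~\ref{The non-diffusive active scalar equations}.
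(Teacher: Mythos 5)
Your proposal is correct and follows essentially the same route as the paper, which derives Theorem~\ref{Gevrey-class local wellposedness MG} by verifying assumptions A1--A4 and A5$_1$ for the MG multipliers (Proposition~\ref{assumption checked MG}) and then specializing the abstract Theorem~\ref{Gevrey-class local wellposedness} with $d=3$, so that $r>\frac{d}{2}+\frac{3}{2}$ becomes $r>3$. Your additional remarks on the $\nu$-independence of $\bar{T}$, $\bar{\tau}$ and $C$ via the uniform constant $C_0$ of A5$_1$ are consistent with (and somewhat more explicit than) the paper's own discussion.
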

\begin{theorem}[Convergence of solutions as $\nu\rightarrow0$ for the MG equations]\label{Convergence of solutions as nu goes to 0 MG}
Fix $s\ge1$, $r>3$ and $K_0>0$. Let $\theta_0$ and $S$ be of Gevrey-class $s$ with radius of convergence $\tau_0>0$ and satisfy the assumptions given in Theorem~\ref{Gevrey-class local wellposedness MG}. If $\theta^\nu$ and $\theta^0$ are Gevrey-class $s$ solutions to \eqref{MG active scalar} for $\nu>0$ and $\nu=0$ respectively with initial datum $\theta_0$ on $\mathbb{T}^3\times[0,\bar{T}]$ with radius of convergence at least $\bar{\tau}$ as described in Theorem~\ref{Gevrey-class local wellposedness MG}, then there exists $T<\bar{T}$ and $\tau=\tau(t)<\bar{\tau}$ such that, for $t\in[0,T]$, we have
\begin{align*}
\lim_{\nu\rightarrow0}\|(\Lambda^re^{\tau\Lambda^\frac{1}{s}}\theta^{\nu}-\Lambda^re^{\tau\Lambda^\frac{1}{s}}\theta^0)(\cdot,t)\|_{L^2}=0.
\end{align*}
\end{theorem}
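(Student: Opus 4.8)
The plan is to deduce this statement directly from the abstract convergence result, Theorem~\ref{Convergence of solutions as nu goes to 0}, together with the verification of the structural hypotheses carried out in Proposition~\ref{assumption checked MG}. Concretely, Proposition~\ref{assumption checked MG} shows that the operators $T^{\nu}_{ij}$ defined by $M^{\nu}_j=\partial_i T^{\nu}_{ij}$, with $\widehat M^{\nu}$ given by \eqref{MG Fourier symbol_1}--\eqref{MG Fourier symbol_4}, satisfy A1--A4 and A$5_1$; since here $d=3$, the restriction $r>3$ in the statement is exactly the condition $r>\frac{d}{2}+\frac{3}{2}$ required in Theorem~\ref{Gevrey-class local wellposedness} and in the first item of Theorem~\ref{Convergence of solutions as nu goes to 0}. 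Thus the hypotheses of that item are in force on $\mathbb{T}^3\times[0,\bar T]$, and the conclusion \eqref{convergence} specialized to $d=3$ gives precisely the claimed limit.

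For a self-contained argument one repeats the proof of the first case of Theorem~\ref{Convergence of solutions as nu goes to 0}. First I would set $\phi^{\nu}=\theta^{\nu}-\theta^{0}$, which solves \eqref{eqn of phi}, and run the Gevrey-norm energy estimate: pairing \eqref{eqn of phi} with $\Lambda^{2r}e^{2\tau\Lambda^{1/s}}\phi^{\nu}$ produces $\mathcal R_1$, the term in which $u^{\nu}-u^{0}$ is transported against $\nabla\theta^{0}$, and $\mathcal R_2$, the transport term $u^{\nu}\cdot\nabla\phi^{\nu}$. Using A$5_1$ to write $\widehat{u^{\nu}-u^{0}}(j)$ as $|j|$ times $(\widehat{T^{\nu}_{ij}}-\widehat{T^{0}_{ij}})(j)\widehat{\theta^{0}}(j)$ plus a term controlled by $|j|\,|\widehat{\phi^{\nu}}(j)|$, together with the triangle inequality $|l|^{1/s}\le|j|^{1/s}+|k|^{1/s}$ for $j+k=l$ and $s\ge1$, $\mathcal R_1$ splits into a part absorbed by $C\|\Lambda^{1/2s}\theta^{0}\|_{\tau,r}\|\Lambda^{1/2s}\phi^{\nu}\|_{\tau,r}\|\phi^{\nu}\|_{\tau,r}$ and a genuine source contribution bounded by $C\|\Lambda^{1/2s}\theta^{0}\|_{\tau,r}\|\Lambda^{1/2s}\phi^{\nu}\|_{\tau,r}\big(\sum_{j}|j|^{d+3}|\widehat{\theta^{0}}(j)|^{2}e^{2\tau|j|^{1/s}}|(\widehat{T^{\nu}_{ij}}-\widehat{T^{0}_{ij}})(j)|^{2}\big)^{1/2}$; the term $\mathcal R_2$ is estimated, using $\nabla\cdot u^{\nu}=0$, by $C\|\Lambda^{1/2s}\phi^{\nu}\|_{\tau,r}^{2}\|\theta^{\nu}\|_{\tau,r}$. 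Choosing $\tau=\tau(t)\le\bar\tau$ decreasing so that $\dot\tau+C\|\theta^{\nu}\|_{\tau,r}+C\|\Lambda^{1/2s}\theta^{0}\|_{\tau,r}^{2}<0$ — which is possible on some interval $[0,T]$ with $T<\bar T$ thanks to the $\nu$-uniform Gevrey bounds \eqref{bound on solution 2}--\eqref{bound on solution 1} of Theorem~\ref{Gevrey-class local wellposedness} — cancels the terms carrying $\|\Lambda^{1/2s}\phi^{\nu}\|_{\tau,r}^{2}$, leaving a differential inequality $\frac{d}{dt}\|\phi^{\nu}\|_{\tau,r}^{2}\le C\|\phi^{\nu}\|_{\tau,r}^{2}+C\,I_{\nu}(t)$ with $I_{\nu}(t)=\sum_{j}|j|^{d+3}|\widehat{\theta^{0}}(j)|^{2}e^{2\tau|j|^{1/s}}|(\widehat{T^{\nu}_{ij}}-\widehat{T^{0}_{ij}})(j)|^{2}$; Gr\"onwall with $\phi^{\nu}(\cdot,0)=0$ then yields $\|\phi^{\nu}(\cdot,t)\|_{\tau,r}^{2}\le e^{CT}C\,I_{\nu}(t)$ on $[0,T]$.

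It remains to show $I_{\nu}(t)\to0$ as $\nu\to0$. Here one uses that, since $r>3=\frac{d+3}{2}$ for $d=3$, the bound \eqref{bound on solution 2} gives $\sum_{j}|j|^{d+3}|\widehat{\theta^{0}}(j)|^{2}e^{2\bar\tau|j|^{1/s}}\le\|\Lambda^{r}e^{\bar\tau\Lambda^{1/s}}\theta^{0}\|_{L^2}^{2}<\infty$, so that $g$ defined by $\widehat g(j)=|j|^{(d+3)/2}|\widehat{\theta^{0}}(j)|e^{\tau|j|^{1/s}}$ belongs to $L^2(\mathbb{T}^3)$; the assumption A4 (verified for the MG symbols in Proposition~\ref{assumption checked MG}, via the uniform convergence on bounded frequency balls of Lemma~\ref{convergence of operator} combined with the $\nu$-uniform bound on $|\widehat M^{\nu}(k)|/|k|$) then yields $I_{\nu}(t)\le\sum_{j}|(\widehat{T^{\nu}_{ij}}-\widehat{T^{0}_{ij}})(j)|^{2}|\widehat g(j)|^{2}\to0$, and combining with the Gr\"onwall bound finishes the proof. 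The main obstacle is not the energy estimate but its two inputs: the $\nu$-independent Gevrey bounds of Theorem~\ref{Gevrey-class local wellposedness}, which are what make the convergence interval $[0,T]$ genuinely uniform in $\nu$ and which themselves require the order-two smoothing A3 together with the $\nu$-uniform boundedness A$5_1$; and the verification of A4 for the explicit singular symbols $\widehat M^{\nu}$, where the failure of uniform convergence at high frequency has to be compensated by the exponential weight furnished by the Gevrey regularity of $\theta^{0}$.
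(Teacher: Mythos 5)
Your proposal is correct and follows essentially the same route as the paper: the paper proves Theorem~\ref{Convergence of solutions as nu goes to 0 MG} precisely by invoking Proposition~\ref{assumption checked MG} to verify A1--A4 and A$5_1$ for the MG symbols and then specializing the first case of the abstract Theorem~\ref{Convergence of solutions as nu goes to 0} to $d=3$, where $r>3$ is exactly $r>\frac{d}{2}+\frac{3}{2}$. Your self-contained recapitulation of the energy estimate, the choice of $\tau$, the Gr\"onwall step, and the use of A4 with the Gevrey-weighted function $g$ matches the paper's proof of that abstract theorem.
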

%
%
\subsection{Incompressible porous media equation}\label{Incompressible porous media equation}
Next we study the incompressible porous media Brinkmann (IPMB) equation. Specifically, we address the following active scalar equation in $\mathbb{T}^2\times[0,\infty)$ with periodic boundary conditions:
\begin{align}
\label{IPMB}
\left\{ \begin{array}{l}
\partial_t\theta^\nu+(u^\nu\cdot\nabla)\theta^\nu=0, \\
u^\nu=M^{\nu}[\theta^{\nu}],\theta^\nu(x,0)=\theta_0(x),
\end{array}\right.
\end{align}
where the symbol of $M^\nu$ is given by \eqref{Fourier multiplier symbol IPMB} with
\begin{align}
\widehat M^{\nu}_1(k)&=\frac{1}{1+\nu(k_1^2+k_2^2)}\left(\frac{k_1k_2}{k_1^2+k_2^2}\right),\label{IPMB Fourier symbol_1}\\
\widehat M^{\nu}_2(k)&=\frac{1}{1+\nu(k_1^2+k_2^2)}\left(\frac{-k_1^2}{k_1^2+k_2^2}\right).\label{IPMB Fourier symbol_2}
\end{align}
We also write $M^\nu_j=\partial_iT_{ij}^{\nu}$ for convenience. To apply the results from Section~\ref{main results}, it suffices to show that the sequence of operators $\{T^{\nu}_{ij}\}_{\nu\ge0}$ satisfy the assumptions A1--A4 and A$5_2$ given in Section~\ref{introduction}. 
\begin{proposition}\label{assumption checked IPMB}
Let $M^\nu_j=\partial_iT_{ij}^{\nu}$, where $M^\nu$ is given by \eqref{IPMB Fourier symbol_1}-\eqref{IPMB Fourier symbol_2}. Then $T_{ij}^{\nu}$ satisfy the assumptions A1--A4 and A5$_2$ given in Section~\ref{introduction}.
\end{proposition}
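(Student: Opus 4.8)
The plan is to verify the six properties A1, A2, A3, A4 and A5$_2$ for the IPMB Fourier multiplier $\widehat{M}^\nu$ given by \eqref{IPMB Fourier symbol_1}--\eqref{IPMB Fourier symbol_2}, writing $\widehat{M}^\nu_j(k) = i k_i \widehat{T^\nu_{ij}}(k)$ so that $\widehat{T^\nu_{ij}}(k)$ is obtained by dividing the symbol of $M^\nu_j$ by a factor of order $|k|$. Most of these are immediate from the explicit formulas, so I would dispatch them quickly: A1 (divergence-free) follows because $k_j \widehat{M}^\nu_j(k) = \tfrac{1}{1+\nu|k|^2}(k_1 k_1 k_2 - k_2 k_1^2)/|k|^2 = 0$ for all $\nu \ge 0$; A2 follows from the classical fact that the Riesz-type operator with symbol $k_ik_j/|k|^2$ maps $L^\infty$ into $BMO$, and the extra smoothing factor $(1+\nu|k|^2)^{-1}$ only improves boundedness; A5$_2$ holds because $|k_i \widehat{T^\nu_{ij}}(k)| = |\widehat{M}^\nu_j(k)| \le \tfrac{1}{1+\nu|k|^2}\cdot 1 \le 1$ uniformly in $\nu \in [0,1]$ (indeed in $\nu \ge 0$), since $|k_ik_j|/|k|^2 \le 1$ and $k_1^2/|k|^2 \le 1$; taking $C_0 = 1$ works.

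Next I would check A3: for fixed $\nu > 0$ we need $|\widehat{T^\nu_{ij}}(k)| \le C_\nu |k|^{-3}$. Since $\widehat{T^\nu_{ij}}(k)$ equals $\widehat{M}^\nu_j(k)/(ik_i)$ in the relevant component (modulo bookkeeping of indices, as in the MG case), and $|\widehat{M}^\nu_j(k)| \le (1+\nu|k|^2)^{-1} \le \nu^{-1}|k|^{-2}$ for $|k| \ge 1$, while dividing by $|k_i|$ loses at worst another power of $|k|$ — so combining with the $|k|^{-2}$ gain we get $|\widehat{T^\nu_{ij}}(k)| \le C \nu^{-1} |k|^{-3}$. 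More carefully, one uses that the ``raw'' symbol $k_ik_j/|k|^2$ or $-k_1^2/|k|^2$ is bounded, so $\widehat{T^\nu_{ij}}(k)$ is a bounded symbol times $(1+\nu|k|^2)^{-1}$ divided by one factor of $|k_i|$; for those terms where $k_i$ does not appear in the numerator one must be slightly careful, but the overall decay $(1+\nu|k|^2)^{-1} \le \nu^{-1}|k|^{-2}$ together with the $|k|^{-1}$ from the derivative structure yields smoothing of order $2$, i.e. $\|u^\nu\|_{H^2} \le C_\nu\|\theta^\nu\|_{L^2}$, which is the content A3 is designed to encode. I would state A3 with $C_\nu = C/\nu$ and note this is exactly the 2-orders-of-smoothing property used in Theorem~\ref{Gevrey-class global wellposedness}.

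The main work is A4: $\lim_{\nu\to 0}\sum_{k}|\widehat{T^\nu_{ij}}(k)-\widehat{T^0_{ij}}(k)|^2|\widehat{g}(k)|^2 = 0$ for all $g \in L^2$. The natural route mirrors the MG argument in Proposition~\ref{assumption checked MG}: first establish the pointwise-uniform-on-bounded-sets convergence
\[
\lim_{\nu\to0}\sup_{\{k\in\mathbb{Z}^2 : k\neq 0,\ |k|\le L\}}\bigl|\widehat{M}^\nu(k)-\widehat{M}^0(k)\bigr| = 0,
\]
which here is elementary because $|\widehat{M}^\nu_j(k)-\widehat{M}^0_j(k)| = \bigl|\tfrac{1}{1+\nu|k|^2}-1\bigr|\cdot\tfrac{|k_ik_j|}{|k|^2} \le \tfrac{\nu|k|^2}{1+\nu|k|^2} \le \nu|k|^2 \le \nu L^2$ on that range. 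Then split the sum over $k$ at a threshold $|k| = L$ chosen so that the tail $\sum_{|k|>L}|\widehat{g}(k)|^2 < \varepsilon$ (possible since $g \in L^2$); on the tail use the uniform bound $|\widehat{T^\nu_{ij}}(k)-\widehat{T^0_{ij}}(k)| \le 2$ coming from A5$_2$ (after dividing by $|k_i|\ge 1$, even better), and on the finite part $|k|\le L$ use the displayed convergence together with $\|g\|_{L^2}^2$. Sending $\nu\to 0$ then $\varepsilon\to 0$ gives A4. I expect this splitting argument — and the small index bookkeeping of passing between $\widehat{M}^\nu_j$ and $\widehat{T^\nu_{ij}}$ in the $2$-dimensional setting — to be the only non-routine point; everything else is direct inspection of the explicit symbols. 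Having verified A1--A4 and A5$_2$, the conclusion is that Proposition~\ref{assumption checked IPMB} holds, and consequently Theorems~\ref{Wellposedness in Sobolev space}, \ref{Gevrey-class global wellposedness}, \ref{Local wellposedness in Sobolev space} and \ref{Convergence of solutions as nu goes to 0} all apply to \eqref{IPMB}.
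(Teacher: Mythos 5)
Your proposal is correct and follows essentially the same route as the paper: direct inspection of the explicit symbols for A1, A2, A3 and A5$_2$ (with the paper, like you, getting A3 from $(1+\nu|k|^2)^{-1}\le \nu^{-1}|k|^{-2}$ and the extra $|k|^{-1}$ from the derivative structure), and A4 via the uniform bound $|\widehat M^\nu(k)-\widehat M^0(k)|\le \nu|k|^2$ on $\{|k|\le L\}$ combined with the same $L^2$ tail-splitting argument used for the MG case. If anything you are slightly more explicit than the paper, which simply declares A1, A2 and A5$_2$ immediate and only writes out A3 and A4.
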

\begin{proof}[\bf Proof]
It suffices to check that $T_{ij}^{\nu}$ satisfy assumptions A3 and A4. To show that $T_{ij}^{\nu}$ satisfy A3, for each $k\in\mathbb{Z}^2/\{k=0\}$,
\begin{align*}
\frac{|\widehat M_1^{\nu}(k)|}{|k|}&=\frac{|k_1k_2|}{1+\nu|k|^2}\times\frac{1}{|k|^3}\le1,
\end{align*}
since $|k|\ge1$. Similarly, $\dis\frac{|\widehat M_2^{\nu}(k)|}{|k|}\le1$. And to see that $T_{ij}^{\nu}$ satisfies A$5_2$, similar to the case of MG equation, it suffice to show that for each $L>0$, 
\begin{equation}\label{stronger assumption A4 IPMB}
\lim_{\nu\rightarrow0}\sup_{\{k\in\mathbb{Z}^3:k\neq0,|k|\le L\}}\frac{|\widehat M^{\nu}(k)-\widehat M^0(k)|}{|k|}=0.
\end{equation}
Fix $L>0$ and for each $k\in\mathbb{Z}^2/\{k=0\}$ with $|k|\le L$, we have
\begin{align*}
\frac{|\widehat M_1^{\nu}(k)-\widehat M^0_1(k)|}{|k|}&=\left|\frac{\nu|k|^2}{(1+\nu|k|^2)}\right|\times\left|\frac{k_1k_2}{|k|^3}\right|\\
&\le\nu\times\frac{|k_1k_2|}{|k|}\le\nu L,
\end{align*}
hence 
\begin{align*}
\lim_{\nu\rightarrow0}\sup_{\{k\in\mathbb{Z}^2:k\neq0,|k|\le L\}}\frac{|\widehat M_1^{\nu}(k)-\widehat M^0_1(k)|}{|k|}=0.
\end{align*}
By the same argument, we also have $\dis\lim_{\nu\rightarrow0}\sup_{\{k\in\mathbb{Z}^2:k\neq0,|k|\le L\}}\frac{|\widehat M_2^{\nu}(k)-\widehat M^0_2(k)|}{|k|}=0$ and \eqref{stronger assumption A4 IPMB} follows.
\end{proof}
Thanks to Proposition~\ref{assumption checked IPMB}, the abstract Theorem~\ref{Wellposedness in Sobolev space}-\ref{Convergence of solutions as nu goes to 0} can be applied to the IPMB equations \eqref{IPMB}. More precisely, we have
\begin{theorem}[Wellposedness in Sobolev space for the IPMB equations]\label{Wellposedness in Sobolev space IPMB}
Let $\theta_0\in W^{s,2}$ for $s\ge0$. Then for each $\nu>0$, we have:
\begin{itemize}
\item if $s=0$, there exists unique global weak solution to \eqref{MG active scalar} such that
\begin{align*}
\theta^\nu&\in BC((0,\infty);L^2),\\
u^\nu&\in C((0,\infty);W^{2,2}).
\end{align*}
In particular, $\theta^\nu(\cdot,t)\rightarrow\theta_0$ weakly in $L^2$ as $t\rightarrow0^+$.
\item if $s>0$, there exists a unique global-in-time solution $\theta^\nu$ to \eqref{MG active scalar} such that $\theta^\nu(\cdot,t)\in W^{s,2}$ for all $t\ge0$. Furthermore, for $s=1$, we have the following single exponential growth in time on $\|\nabla\theta^\nu(\cdot,t)\|_{L^2}$:
\begin{align*}
\|\nabla\theta^\nu(\cdot,t)\|_{L^2}\le C \|\nabla\theta_0\|_{L^2}\exp\left(Ct\|\theta_0\|_{W^{1,2}}\right),
\end{align*}
where $C>0$ is a constant which depends only on some dimensional constants.
\end{itemize}
\end{theorem}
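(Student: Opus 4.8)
The plan is to obtain this theorem as an immediate specialization of the abstract wellposedness result Theorem~\ref{Wellposedness in Sobolev space}, once it is checked that the IPMB constitutive operator meets its hypotheses. The first step is to invoke Proposition~\ref{assumption checked IPMB}: the family $\{T^\nu_{ij}\}_{\nu\ge 0}$ associated, through $M^\nu_j=\partial_{x_i}T^\nu_{ij}$, to the IPMB Fourier symbol \eqref{IPMB Fourier symbol_1}--\eqref{IPMB Fourier symbol_2} satisfies assumptions A1--A4 together with A5$_2$; in particular it satisfies A1--A3 and A5$_i$ with $i=2$, which are precisely the hypotheses required in Theorem~\ref{Wellposedness in Sobolev space}. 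Here the ambient dimension is $d=2$, so that $W^{s,d}=W^{s,2}=H^s$, which is consistent with the hypothesis $\theta_0\in W^{s,2}$ in the statement.

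The second step is to match the two systems. The IPMB system \eqref{IPMB} is exactly the abstract active scalar system \eqref{abstract active scalar eqn} with the source term $S\equiv 0$. The abstract standing requirement $\int_{\mathbb{T}^2}\theta^\nu(\cdot,t)\,dx=0$ is propagated in time, since \eqref{IPMB} is a pure transport equation along the divergence-free (hence volume-preserving) velocity $u^\nu$, so it is enough to impose it on $\theta_0$, which is no loss of generality after subtracting the spatial average. Applying Theorem~\ref{Wellposedness in Sobolev space} with $d=2$ and $S=0$ then gives, for each fixed $\nu>0$: when $s=0$, a unique global weak solution with $\theta^\nu\in BC((0,\infty);L^2)$ and $u^\nu\in C((0,\infty);W^{2,2})$ together with the weak-$L^2$ continuity $\theta^\nu(\cdot,t)\rightharpoonup\theta_0$ as $t\to 0^+$; and when $s>0$, a unique global-in-time solution with $\theta^\nu(\cdot,t)\in W^{s,2}$ for all $t\ge 0$. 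For $s=1$, substituting $S\equiv 0$ into the single-exponential bound \eqref{exp growth} collapses the exponent to $Ct\|\theta_0\|_{W^{1,2}}$, yielding
\[
\|\nabla\theta^\nu(\cdot,t)\|_{L^2}\le C\|\nabla\theta_0\|_{L^2}\exp\bigl(Ct\|\theta_0\|_{W^{1,2}}\bigr),
\]
which is the asserted estimate.

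Since the argument is a verbatim specialization of an already-proved abstract theorem, there is essentially no analytical obstacle; the only point deserving attention is the nature of the constant $C$ in the $s=1$ bound. The proof of Theorem~\ref{Wellposedness in Sobolev space} routes through the order-two smoothing constant of A3 (via the Fourier-multiplier bound \eqref{Fourier multiplier theorem}), so strictly that constant depends on $\nu$; to present $C$ as depending only on dimensional constants one would re-examine the step bounding $\|\nabla u^\nu\|_{L^\infty}$ and use instead the $\nu$-uniform zero-order bound A5$_2$ furnished by Proposition~\ref{assumption checked IPMB}. This bookkeeping is the hardest part of the plan, and it is not hard.
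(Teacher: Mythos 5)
Your proposal is correct and is exactly the route the paper takes: the paper gives no separate proof of this theorem, stating only that Proposition~\ref{assumption checked IPMB} verifies A1--A4 and A5$_2$ for the IPMB symbols \eqref{IPMB Fourier symbol_1}--\eqref{IPMB Fourier symbol_2}, after which Theorem~\ref{Wellposedness in Sobolev space} with $d=2$ and $S\equiv 0$ yields the statement verbatim. Your closing remark about the constant is a fair catch of an issue in the paper itself rather than in your argument: the proof of the abstract $s=1$ bound routes $\|\nabla u^\nu\|_{L^\infty}$ through the $\nu$-dependent smoothing constant of A3, so the claim that $C$ depends only on dimensional constants is not justified by the specialization alone and would indeed require the extra work you describe (or should simply read as $\nu$-dependent, as in Theorem~\ref{Wellposedness in Sobolev space}).
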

\begin{theorem}[Gevrey-class global wellposedness for the IPMB equations]\label{Gevrey-class global wellposedness IPMB}
Fix $s\ge1$. Let $\theta_0$ be of Gevrey-class $s$ with radius of convergence $\tau_0>0$. Then for each $\nu>0$, there exists a unique Gevrey-class $s$ solution $\theta^\nu$ to \eqref{IPMB} on $\mathbb{T}^2\times[0,\infty)$ with radius of convergence at least $\tau=\tau(t)$ for all $t\in[0,\infty)$, where $\tau$ is a decreasing function satisfying
\begin{align*}
\tau(t)\ge\tau_0e^{-Ct\|e^{\tau_0\Lambda^\frac{1}{s}}\theta_0\|_{L^2}}.
\end{align*}
Here $C>0$ is a constant which depends on $\nu$ but independent of $t$.
\end{theorem}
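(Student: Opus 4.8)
The plan is to deduce this theorem directly from the abstract result Theorem~\ref{Gevrey-class global wellposedness}, once the structural hypotheses on the constitutive operator have been verified for the IPMB symbol. Observe first that \eqref{IPMB} is precisely the abstract system \eqref{abstract active scalar eqn} in spatial dimension $d=2$ with vanishing forcing term $S\equiv0$, and with $u^\nu_j=M^\nu_j[\theta^\nu]=\partial_{x_i}T^\nu_{ij}[\theta^\nu]$, where the symbols $\widehat M^\nu_j$ are given by \eqref{IPMB Fourier symbol_1}--\eqref{IPMB Fourier symbol_2}. Since $S\equiv0$ has zero mean, and since the symbol $\widehat M^\nu(k)$ is defined only for $k\neq0$ (so that $\widehat{u^\nu}(0)=0$ and the transport equation propagates the zero-mean condition $\int_{\mathbb{T}^2}\theta^\nu(x,t)\,dx=0$), all of the standing assumptions made in Section~\ref{main results} are satisfied.

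Next I would invoke Proposition~\ref{assumption checked IPMB}, which shows that the operators $\{T^\nu_{ij}\}_{\nu\ge0}$ associated with \eqref{IPMB Fourier symbol_1}--\eqref{IPMB Fourier symbol_2} satisfy assumptions A1--A4 together with A5$_2$. In particular, the smoothing bound A3, namely $|\widehat T^\nu_{ij}(k)|\le C_\nu|k|^{-3}$ for each fixed $\nu>0$, holds: it follows from $|\widehat M^\nu_j(k)|\le(1+\nu|k|^2)^{-1}\le C_\nu|k|^{-2}$, which is the two-orders-of-smoothing estimate $\|u^\nu\|_{H^2}\le C_\nu\|\theta^\nu\|_{L^2}$ underpinning the global-in-time argument. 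Thus the hypotheses ``A1--A3 and A5$_i$ for $i=1$ or $2$'' of Theorem~\ref{Gevrey-class global wellposedness} are in force with $i=2$.

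Applying Theorem~\ref{Gevrey-class global wellposedness} with $d=2$ and $S\equiv0$ then yields, for each $\nu>0$, a unique Gevrey-class $s$ solution $\theta^\nu$ of \eqref{IPMB} on $\mathbb{T}^2\times[0,\infty)$ whose radius of convergence $\tau=\tau(t)$ is a decreasing function obeying the lower bound \eqref{lower bound on tau}. Setting $S\equiv0$ there collapses the exponent to $-C\|e^{\tau_0\Lambda^{1/s}}\theta_0\|_{L^2}\,t$, giving exactly the claimed estimate $\tau(t)\ge\tau_0e^{-Ct\|e^{\tau_0\Lambda^{1/s}}\theta_0\|_{L^2}}$, with $C>0$ depending on $\nu$ through the constant $C_\nu$ in A3 but independent of $t$. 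This completes the proof; there is essentially no obstacle beyond bookkeeping, the only points requiring a moment's care being the verification of A3 for the IPMB symbol (handled via Proposition~\ref{assumption checked IPMB}) and the observation that the absence of a forcing term in \eqref{IPMB} does not affect the applicability of the abstract theorem, since $S\equiv0$ trivially satisfies every hypothesis imposed on $S$ in Theorem~\ref{Gevrey-class global wellposedness}.
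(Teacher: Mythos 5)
Your proposal is correct and follows essentially the same route as the paper, which likewise obtains Theorem~\ref{Gevrey-class global wellposedness IPMB} by citing Proposition~\ref{assumption checked IPMB} to verify A1--A4 and A5$_2$ for the symbols \eqref{IPMB Fourier symbol_1}--\eqref{IPMB Fourier symbol_2} and then specializing the abstract Theorem~\ref{Gevrey-class global wellposedness} to $d=2$ with $S\equiv0$, so that the exponent in \eqref{lower bound on tau} reduces to $-C\|e^{\tau_0\Lambda^{1/s}}\theta_0\|_{L^2}\,t$. Your explicit check that $|\widehat M^\nu_j(k)|\le(1+\nu|k|^2)^{-1}\le C_\nu|k|^{-2}$ gives the $\nu$-dependent smoothing required by A3 is in fact slightly more careful than the computation displayed in the paper's own proof of Proposition~\ref{assumption checked IPMB}.
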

\begin{theorem}[Local wellposedness in Sobolev space for the IPMB equations]\label{local-in-time existence theorem for IPMB}
Fix $s>2$ and assume that $\theta_0\in H^s(\mathbb{T}^2)$ has zero-mean on $\mathbb{T}^2$. Then there exists a $T>0$ and a unique smooth solution $\theta^0$ to \eqref{IPMB} with $\nu=0$ such that
$$\theta^0\in L^\infty(0,T;H^s(\mathbb{T}^2)).$$
\end{theorem}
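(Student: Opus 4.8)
The plan is to derive this statement as a direct corollary of the abstract Theorem~\ref{Local wellposedness in Sobolev space}, specialised to spatial dimension $d=2$ and to vanishing forcing $S\equiv 0$. First I would note that the IPMB system \eqref{IPMB} with $\nu=0$ is precisely the abstract system \eqref{abstract active scalar eqn} with $S\equiv 0$, the operators $T^{0}_{ij}$ being those determined by $M^{0}_j=\partial_i T^{0}_{ij}$ from the symbols \eqref{IPMB Fourier symbol_1}--\eqref{IPMB Fourier symbol_2}. By Proposition~\ref{assumption checked IPMB} these operators satisfy assumptions A1--A2 and A5$_2$ (in fact A3--A4 as well). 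Since $S\equiv 0$ trivially lies in $H^{s}(\mathbb{T}^2)$ with zero mean, and since the threshold $d/2+1$ equals $2$ when $d=2$, the hypotheses of Theorem~\ref{Local wellposedness in Sobolev space} hold for every $s>2$, and that theorem produces a time $T>0$ and a unique solution $\theta^{0}\in L^\infty(0,T;H^s(\mathbb{T}^2))$, which is exactly the claim.

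If one instead wants a self-contained argument, I would run the proof of Theorem~\ref{Local wellposedness in Sobolev space} verbatim in this setting. Set up the Picard-type iteration \eqref{theta n=1}--\eqref{theta n=n} with $u_{n-1}=\partial_x T^{0}[\theta_{n-1}]$ and $S\equiv0$, each linear transport step being solved through the vanishing-viscosity approximation (Theorem~A1 of \cite{FRV12}). Because A5$_2$ makes $\partial_x T^{0}$ a zero-order operator, so that $\|\Lambda^\sigma u_{n-1}\|_{L^2}\le\|\Lambda^\sigma\theta_{n-1}\|_{L^2}$ for every $\sigma$, the commutator estimate \eqref{Commutator estimate} (applied with $f=u_{n-1}$, $g=\nabla\theta_n$ and the H\"{o}lder exponents as in the proof) closes the uniform bound $\|\Lambda^s\theta_n\|_{L^\infty(0,T;L^2)}^2\le 2\|\Lambda^s\theta_0\|_{L^2}^2$ on a time interval depending only on $\|\theta_0\|_{H^s}$. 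Applying the same structure to the differences $\tilde\theta_n=\theta_n-\theta_{n-1}$, using the zero-order control of $\tilde u_{n-1}$, yields a contraction in $L^\infty(0,T;H^{s-1})$ after shrinking $T$, hence a strong limit $\theta^{0}$; the embedding $H^{s-1}\hookrightarrow C(\mathbb{T}^2)$, valid since $s>2$, turns this limit into a genuine solution, and uniqueness follows from the $H^{s-1}$ energy estimate as in \cite{FRV12}.

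I do not expect any genuine obstacle here, since the theorem is a specialisation of machinery already developed. The one point that must be checked — and which is already dispatched in Proposition~\ref{assumption checked IPMB} — is that the IPMB symbols \eqref{IPMB Fourier symbol_1}--\eqref{IPMB Fourier symbol_2} actually satisfy A5$_2$, i.e. that $|k_i\widehat{T}^{0}_{ij}(k)|$ is bounded uniformly in $k$; this follows at once from $|\widehat{M}^{0}_j(k)|\le C$, the statement that the inviscid IPM operator is a bounded zero-order singular integral. Beyond that bookkeeping, the only nontrivial analytic input is the commutator estimate \eqref{Commutator estimate}, which is quoted directly from \cite{FRV12}.
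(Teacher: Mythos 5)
Your proposal is correct and follows exactly the route the paper takes: Theorem~\ref{local-in-time existence theorem for IPMB} is obtained there as a direct specialisation of the abstract Theorem~\ref{Local wellposedness in Sobolev space} to $d=2$, $S\equiv0$, with Proposition~\ref{assumption checked IPMB} supplying the verification of A1--A2 and A5$_2$ for the IPMB symbols. Your additional sketch of the underlying iteration argument matches the paper's proof of the abstract theorem, so there is nothing to add.
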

\begin{theorem}[Convergence of solutions as $\nu\rightarrow0$ for the IPMB equations]\label{Convergence of solutions as nu goes to 0 IPMB}
Assume that the hypotheses and notations of Theorem~\ref{local-in-time existence theorem for IPMB} are in force. For $t\in[0,T]$, we have
\begin{align*}
\lim_{\nu\rightarrow0}\|(\theta^{\nu}-\theta^0)(\cdot,t)\|_{H^{s-1}}=0.
\end{align*}
\end{theorem}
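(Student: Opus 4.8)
The plan is to obtain this statement as a direct specialization of the abstract convergence result Theorem~\ref{Convergence of solutions as nu goes to 0}, once it has been checked that the IPMB multipliers fit the abstract framework. First I would invoke Proposition~\ref{assumption checked IPMB}, which establishes that the operators $T_{ij}^\nu$ associated with the symbols \eqref{IPMB Fourier symbol_1}--\eqref{IPMB Fourier symbol_2} satisfy assumptions A1--A4 and A5$_2$; in particular A3 and A4 both hold. Since the IPMB equation \eqref{IPMB} is the case $S\equiv 0$ in dimension $d=2$, the hypothesis $s>2$ of Theorem~\ref{local-in-time existence theorem for IPMB} coincides with $s>\frac{d}{2}+1$, so Theorem~\ref{Local wellposedness in Sobolev space} applies and yields a time $T>0$ and a unique $\theta^0\in L^\infty(0,T;H^s(\mathbb{T}^2))$ solving \eqref{IPMB} with $\nu=0$; moreover, for each $\nu>0$, Theorem~\ref{Wellposedness in Sobolev space} together with Remark~\ref{existence of Hr solution} produces the corresponding global-in-time $H^s$ solution $\theta^\nu$.

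Next I would apply the second bullet of Theorem~\ref{Convergence of solutions as nu goes to 0}. Its hypotheses are precisely: assumptions A3--A4 (verified above via Proposition~\ref{assumption checked IPMB}), $d\ge 2$ with $s>\frac{d}{2}+1$ (here $d=2$ and $s>2$), and the existence of the $H^s$ solutions $\theta^\nu$ and $\theta^0$ on $[0,T]$ (just constructed). Its conclusion \eqref{convergence sobolev} then reads $\lim_{\nu\to 0}\|(\theta^\nu-\theta^0)(\cdot,t)\|_{H^{s-1}}=0$ for $t\in[0,T]$, which is exactly the claim.

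For the reader's convenience I would also recall the internal mechanism so it is clear why A4 is the decisive ingredient. Writing $\phi^\nu=\theta^\nu-\theta^0$, one derives the energy identity for $\frac12\frac{d}{dt}\|\phi^\nu\|_{L^2}^2$ whose right-hand side is bounded by $\|(u^\nu-u^0)(\cdot,t)\|_{L^2}\|\phi^\nu(\cdot,t)\|_{L^2}\|\nabla\theta^0(\cdot,t)\|_{L^\infty}$; splitting $u^\nu-u^0=\partial_x T^\nu[\phi^\nu]+\partial_x(T^\nu-T^0)[\theta^0]$, using the uniform bound from A5$_2$ on the first term and A4 (with $g=\nabla\theta^0(\cdot,t)$, which lies in $L^2$ because $\theta^0\in H^s$ with $s>2$) to kill the second as $\nu\to 0$, Gr\"onwall's inequality gives $\|\phi^\nu(\cdot,t)\|_{L^2}\to 0$. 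The Gagliardo--Nirenberg interpolation $\|\phi^\nu\|_{H^{s-1}}\le C\|\phi^\nu\|_{L^2}^{\gamma}\|\phi^\nu\|_{H^s}^{1-\gamma}$, combined with the uniform-in-$\nu$ bound $\sup_{[0,T]}\|\theta^\nu\|_{H^s}\le C$, then upgrades this to convergence in $H^{s-1}$.

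The only genuine point requiring care — and where I expect the bookkeeping to concentrate — is ensuring that the uniform-in-$\nu$ $H^s$ bound on $\theta^\nu$ (needed both for the interpolation step and to keep the Gr\"onwall constants $\nu$-independent) holds on one common interval $[0,T]$. This follows because the constants in the proof of Theorem~\ref{Local wellposedness in Sobolev space} depend only on $C_0$ from A5$_2$ and on $\|\theta_0\|_{H^s}$, all of which are independent of $\nu$; one simply shrinks $T$ if necessary so that the same Picard-type existence time is valid simultaneously for every $\nu\ge 0$.
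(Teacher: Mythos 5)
Your proposal is correct and follows essentially the same route as the paper: the paper likewise obtains this theorem by citing Proposition~\ref{assumption checked IPMB} to verify A1--A4 and A5$_2$ and then specializing the second bullet of the abstract Theorem~\ref{Convergence of solutions as nu goes to 0} (whose proof is exactly the $L^2$ energy estimate with the splitting of $u^\nu-u^0$, Gr\"onwall, and Gagliardo--Nirenberg interpolation against the uniform-in-$\nu$ $H^s$ bound that you recap). Your closing remark about shrinking $T$ so that the $H^s$ bound is uniform in $\nu$ on a common interval matches the corresponding step in the paper's proof of the abstract theorem.
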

\begin{remark}
The results given in Theorem~\ref{local-in-time existence theorem for IPMB} are consistent with those discussed in \cite{CFG11}-\cite{CGO07}. Furthermore , the abstract Theorem~\ref{Local wellposedness in Sobolev space} can also be applied to the non-diffusive SQG equation to show local wellposedness in Sobolev spaces \cite{R95}.
\end{remark}
\begin{remark}
In \cite{FFWV12}, the authors studied the singular incompressible porous media (SIPM) equations set in $\mathbb{T}^2\times[0,\infty)$ with periodic boundary conditions, which are given by
\begin{align}\label{SIPM 1}
&\partial_t\theta+v\cdot\nabla\theta=0,\\
&u=-\nabla(-\Delta)^{-1}\partial_{x_2}\Lambda^\beta\theta-(0,\Lambda^\beta\theta)=M^\beta[\theta].\label{SIPM 2}
\end{align}
The operator $M^\beta$ in \eqref{SIPM 2} is a pseudodifferential operator of order $\beta$, in which the Fourier multiplier symbol can be computed explicitly as $k_1k^{\perp}|k|^{\beta-2}$. It is proved in \cite{FFWV12} that when $0< \beta\le1$ the SIPM equations are ill-posed in Sobolev spaces, however local well-posedness holds for certain patch type weak solutions.

It is straightforward to see that for the case $0<\beta\le1$, the system \eqref{SIPM 1}-\eqref{SIPM 2} satisfies the properties A1--A2 and A5$_1$ (by taking $\nu=0$), so the abstract Theorem~\ref{Gevrey-class local wellposedness} also holds in analogy with those for the MG equations. More specifically, we obtain the following local-in-time Gevrey class existence theorem for the SIPM equations:
\begin{theorem}[Gevrey-class local wellposedness for the SIPM equations]\label{Gevrey-class local wellposedness SIPM}
Fix $\beta\in(0,1]$, $s\ge1$, $r>\frac{5}{2}$ and $K_0>0$. Let $\theta(x,0)=\theta_0$ be of Gevrey-class $s$ with radius of convergence $\tau_0>0$ and satisfies
\begin{align*}
\|\Lambda^re^{\tau_0\Lambda^\frac{1}{s}}\theta(\cdot,0)\|_{L^2}\le K_0.
\end{align*}
There exists $\bar{T},\bar{\tau}>0$ and a unique Gevrey-class $s$ solution $\theta$ to \eqref{SIPM 1}-\eqref{SIPM 2} defined on $\mathbb{T}^2\times[0,\bar{T}]$ with radius of convergence at least $\bar{\tau}$. In particular, there exists a constant $C=C(K_0)>0$ such that for all $t\in[0,\bar{T}]$,
\begin{align*}
\|\Lambda^re^{\bar{\tau}\Lambda^\frac{1}{s}}\theta(\cdot,t)\|_{L^2}\le C.
\end{align*}
\end{theorem}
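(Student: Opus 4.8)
The plan is to show that the SIPM operator $M^\beta$ fits the abstract framework of Section~\ref{main results} with $\nu=0$: namely, to exhibit a tensor of operators $\{T^0_{ij}\}_{1\le i,j\le 2}$ with $M^\beta_j=\partial_{x_i}T^0_{ij}$ satisfying assumptions A1--A2 and A$5_1$, and then to invoke Theorem~\ref{Gevrey-class local wellposedness} (whose proof goes through Proposition~\ref{Gevrey-class local existence nu=0}) in dimension $d=2$ with forcing $S\equiv 0$. Reducing to the zero-mean case is harmless, since $\widehat{M^\beta}(0)=0$, so adding a constant to $\theta$ changes neither $u$ nor the equation \eqref{SIPM 1}--\eqref{SIPM 2}.

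First I would make the decomposition explicit. The Fourier symbol of $M^\beta$ on $\mathbb{Z}^2\setminus\{0\}$ is $\widehat{M^\beta}(k)=k_1 k^\perp|k|^{\beta-2}$, so I set $\widehat{T^0_{ij}}(k)=-\,i\,k_i|k|^{-2}\,\widehat{M^\beta_j}(k)$ for $k\neq 0$ and $\widehat{T^0_{ij}}(0)=0$; then $\sum_i (ik_i)\widehat{T^0_{ij}}(k)=\widehat{M^\beta_j}(k)$, i.e. $u_j=M^\beta_j[\theta]=\partial_{x_i}T^0_{ij}[\theta]$. Next I would verify the three assumptions. For A1, equivalently $\nabla\cdot u=0$, a symbol computation gives $\sum_{i,j}(ik_i)(ik_j)\widehat{T^0_{ij}}(k)=i\sum_j k_j\widehat{M^\beta_j}(k)$, and $\sum_j k_j\widehat{M^\beta_j}(k)=(k_1^2 k_2-k_2 k_1^2)|k|^{\beta-2}=0$. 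For A$5_1$ (only the $\nu=0$ clause is relevant here), I bound $|\widehat{T^0_{ij}}(k)|\le |k|^{-1}|\widehat{M^\beta_j}(k)|\le |k|^{-1}\,|k_1|\,|k^\perp|\,|k|^{\beta-2}\le |k|^{\beta-1}\le 1$, using $|k^\perp|=|k|$, $|k_1|\le|k|$, $|k|\ge1$ and $\beta\le1$; hence A$5_1$ holds with $C_0=1$. For A2, observe that $\widehat{T^0_{ij}}(k)$ extends to a symbol on $\mathbb{R}^2\setminus\{0\}$ that is smooth and homogeneous of degree $\beta-1\le 0$: for $0<\beta<1$ this makes $T^0_{ij}$ a smoothing operator, bounded $L^\infty\to L^\infty\hookrightarrow BMO$, and for the endpoint $\beta=1$ it is a Calder\'{o}n--Zygmund operator, bounded $L^\infty\to BMO$ by the classical theory.

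With A1--A2 and A$5_1$ in hand, and $S\equiv0$ trivially satisfying $\|\Lambda^r e^{\tau_0\Lambda^{1/s}}S\|_{L^2}\le K_0$, Theorem~\ref{Gevrey-class local wellposedness}---applied with $d=2$, so that the hypothesis $r>\tfrac{d}{2}+\tfrac{3}{2}$ reads $r>\tfrac{5}{2}$---produces $\bar T,\bar\tau>0$ and a unique Gevrey-class $s$ solution $\theta$ on $\mathbb{T}^2\times[0,\bar T]$ with radius of convergence at least $\bar\tau$, together with the bound $\|\Lambda^r e^{\bar\tau\Lambda^{1/s}}\theta(\cdot,t)\|_{L^2}\le C(K_0)$ for $t\in[0,\bar T]$, which is precisely the asserted statement. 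I expect the only non-mechanical point to be the verification of A2 at the endpoint $\beta=1$, where $T^0_{ij}$ is a genuine singular integral operator and one must invoke the $L^\infty\to BMO$ boundedness of Calder\'{o}n--Zygmund operators rather than a smoothing estimate; for $0<\beta<1$ everything is immediate since the relevant symbols have strictly negative order.
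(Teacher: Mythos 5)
Your proposal is correct and follows exactly the route the paper takes: the paper's own justification is the one-line observation that the SIPM operator (with $\nu=0$) satisfies A1--A2 and A$5_1$, after which Theorem~\ref{Gevrey-class local wellposedness} with $d=2$ and $S\equiv0$ gives the claim. You merely fill in the symbol computations that the paper declares ``straightforward,'' and your verifications (the decomposition $\widehat{T^0_{ij}}(k)=-ik_i|k|^{-2}\widehat{M^\beta_j}(k)$, the divergence-free identity, the bound $|\widehat{T^0_{ij}}(k)|\le|k|^{\beta-1}\le1$, and the $L^\infty\to BMO$ boundedness) are all accurate.
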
 
\end{remark}
%

\subsection*{Acknowledgment} S. Friedlander is supported by NSF DMS-1613135 and A. Suen is supported by Hong Kong Early Career Scheme (ECS) grant project number 28300016.

\end{document}